\newtheorem{counter}{counter}[section]
\newtheorem{theorem}[counter]{Theorem}
\newtheorem{corollary}[counter]{Corollary}
\newtheorem{lemma}[counter]{Lemma}
\newtheorem{proposition}[counter]{Proposition}
\theoremstyle{definition}
\newtheorem{definition}[counter]{Definition}
\newtheorem{remark}[counter]{Remark}
\newtheorem{example}[counter]{Example}
\newcommand{\PP}{\mathbb{P}}
\newcommand{\Z}{\mathbb{Z}}
\newcommand{\vv}{\mathbf{v}}
\newcommand{\ww}{\mathbf{w}}
\newcommand{\OO}{\mathcal{O}}
\newcommand{\gr}{\text{gr}}
\newcommand{\Pic}{\text{Pic}}
\newcommand{\ev}{\text{ev}}
\newcommand{\Hom}{\text{Hom}}
\newcommand{\coker}{\text{coker}}
\newcommand{\Ext}{\text{Ext}}
\newcommand{\ext}{\text{ext}}
\newcommand{\Aut}{\text{Aut}}
\newcommand{\ch}{\text{ch}}
\newcommand{\expdim}{\text{expdim}}
\newcommand{\expcodim}{\text{expcodim}}
\newcommand{\Supp}{\text{Supp}}
\newcommand{\Quot}{\text{Quot}}
\title{Higher rank Brill-Noether theory on $\PP^2$}
\author{Ben Gould$^{1, \ast}$, Yeqin Liu$^2$, and Woohyung Lee$^3$}
\begin{document}
\maketitle

\noindent $^{1,2,3}$: Department of Mathematics, Statistics, and Computer Science, University of Illinois at Chicago, Science and Engineering Offices, 851 South Morgan Street, Chicago, IL 60607, USA \\

\noindent $^{\ast}$\textit{Correspondence to be sent to: email: bgould3@uic.edu}

\begin{abstract}
    Let $M_{\PP^2}(\vv)$ be a moduli space of semistable sheaves on $\PP^2$, and let $B^k(\vv) \subseteq M_{\PP^2}(\vv)$ be the \textit{Brill-Noether locus} of sheaves $E$ with $h^0(\PP^2, E) \geq k$. In this paper we develop the foundational properties of Brill-Noether loci on $\PP^2$. Set $r = r(E)$ to be the rank and $c_1, c_2$ the Chern classes. The Brill-Noether loci have natural determinantal scheme structures and expected dimensions $\dim B^k(\vv) = \dim M_{\PP^2}(\vv) - k(k - \chi(E))$. When $c_1 > 0$, we show that the Brill-Noether locus $B^r(\vv)$ is nonempty. When $c_1 = 1$, we show all of the Brill-Noether loci are irreducible and of the expected dimension. We show that when $\mu = c_1/r > 1/2$ is not an integer and $c_2 \gg 0$, the Brill-Noether loci are reducible and describe distinct irreducible components of both expected and unexpected dimension.
\end{abstract}

\tableofcontents

\section{Introduction}
On a polarized variety $(X,H)$ the moduli spaces $M_{X,H}(\vv)$ of $H$-semistable sheaves of numerical type $\vv$ carry \textit{Brill-Noether loci} $B^k(\vv) \subseteq M_{X,H}(\vv)$ whose members $E$ satisfy $h^0(X,E) \geq k$. In this paper we develop the foundational properties of Brill-Noether loci on $X = \PP^2$. Set $r = \ch_0(\vv)$, $\mu = \mu(\vv) = \frac{\ch_1(\mathbf{v})}{\ch_0(\mathbf{v})}$ and $\Delta = \Delta(\vv) = \frac{1}{2}\mu(\mathbf{v})^2 - \frac{\ch_2(\mathbf{v})}{\ch_0(\mathbf{v})}$ to be the rank, slope, and discriminant of $\vv$. The Brill-Noether loci have natural determinantal scheme structures and expected dimensions $\expdim B^k(\vv) = \dim M_{\PP^2}(\vv) - k(k - \chi(E))$. When $\mu > 0$ we show that $B^r(\vv)$ is nonempty. When $\ch_1(\vv) = 1$, we show each Brill-Noether locus $B^k(\vv)$ is irreducible and of the expected dimension as a determinantal variety, $\dim B^k(\vv) = \dim M_{\PP^2}(\vv) - k(k - \chi(\vv))$. We prove that when $r, \ch_1 > 0$ and $\mu > 1/2$ is not an integer with $\Delta \gg 0$, then $B^r(\vv)$ is reducible, and describe distinct irreducible components.

When $X = C$ is a smooth curve and the rank is 1, $M_X(\vv) = \Pic^d(C)$ is the space of line bundles of a given degree $d$, and the Brill-Noether loci $B^k(\vv) = W^k_d(C)$ have been studied for over a century \cite{ACGH}. Much is known about their geometry when $C$ is general: when the expected dimension of $W^k_d(C)$ as a determinantal variety is positive, it is nonempty of that dimension and irreducible. A general $L \in \Pic^d(C)$ has $h^0(L) = \chi(L) = d - g + 1$ when this is non-negative. In higher rank the general bundle still has $h^0(E) = \chi(E)$ when this is non-negative, and the Brill-Noether loci have been studied in detail (for a survey see, e.g., \cite{News} and its bibliography).

On algebraic surfaces, however, much less is known about Brill-Noether theory. The basic theory has begun to be worked out on Hirzebruch surfaces (see \cite{CMR}) and in rank 2 on $\PP^2$ (see \cite{Segre}). On $\PP^2$, the moduli spaces $M_{\PP^2}(\vv)$ of semistable bundles of any rank are irreducible, and a well-known theorem of G\"ottsche-Hirschowitz describes the global sections of a general bundle $E \in M_{\PP^2}(\vv)$ of rank at least two: if the slope $\mu(E)$ is positive, then $h^0(E) = \max\{0, \chi(E)\}$, which is determined by $\vv$. More generally, a general bundle $E$ of any slope has at most one nonzero cohomology group, and by semicontinuity there is an open dense subset of $M_{\PP^2}(\vv)$ of bundles with this cohomology. The Brill-Noether loci $B^k(\vv)$ with $k > \chi(\vv)$ form its complement. These foundational results make the study of the Brill-Noether loci $B^k(\vv)$ on $\PP^2$ approachable.

\subsection{Geometry of Brill-Noether loci}
The moduli spaces $M(\vv) = M_{\PP^2}(\vv)$ of semistable sheaves on $\PP^2$ with Chern character $\vv$ are irreducible projective algebraic varieties. We call a Chern character $\vv \in K(\PP^2)$ stable if it is the Chern character of a stable sheaf, so that $M(\vv) \neq \emptyset$. We are interested in the Brill-Noether loci $B^k(\vv) \subseteq M(\vv)$. As in the case of line bundles on curves, the Brill-Noether loci are constructed as determinantal varieties, so each has an \textit{expected dimension}, which is a lower bound for the dimension of an irreducible component of $B^k(\vv)$; it is
    \[\expdim B^k(\vv) = \dim M_{\PP^2}(\vv) - k(k - \chi(\vv)).\]

Our main results are summarized as follows.

\begin{theorem} \label{main}
Let $\vv \in K(\PP^2)$ be a stable Chern character and $M(\vv)$ the associated moduli space of semistable sheaves. Suppose that $\mu(\vv) > 0$ and set $r = \ch_0(\vv)$. 
\begin{enumerate}
    \item (Emptiness) For any $E \in M(\vv)$, 
        \begin{equation} \label{eq:mainbound}
            h^0(E) \leq \max\left\{\frac{1}{2}c_1(E)^2 + \frac{3}{2}c_1(E) + 1, r(E)\right\}. 
        \end{equation}
    Equivalently, if $k \geq \max\left\{\frac{1}{2}c_1(E)^2 + \frac{3}{2}c_1(E) + 1, r(E)\right\}$, then $B^k(\mathbf{v})$ is empty. 
    
    When $r(E) \geq \frac{1}{2}c_1(E)^2 + \frac{3}{2}c_1(E) + 1$, this bound is sharp, and $B^{r}(\vv)$ contains a component of the expected dimension. See Theorem \ref{pbound} and Theorem \ref{depth}. 
    \item \label{main1} (Nonemptiness) $B^r(\vv)$ is nonempty. See Theorem \ref{nonempty}.
    \item \label{main2} (Irreducibility) If $\ch_1(\vv) = 1$, all of the nonempty Brill-Noether loci on $M(\vv)$ are irreducible and of the expected dimension. See Theorem \ref{irred}. 
    \item \label{main3} (Reducibility) Suppose $\ch_1(\vv) > 1$. If $\mu(\vv) > 1/2$ is not an integer and $\Delta(\vv) \gg 0$, then $B^r(\vv)$ is reducible and contains components of both expected and unexpected dimensions. See Theorem \ref{comps}.
\end{enumerate}
\end{theorem}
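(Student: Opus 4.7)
The plan is to treat the four parts separately, following the four companion theorems referenced in the statement.

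\textbf{Part (1).} The key input is the restriction of $E$ to a general line $L \cong \PP^1$: semistability together with $\mu(\vv) > 0$ tightly constrains the splitting type $E|_L = \bigoplus \OO_{\PP^1}(a_i)$, giving a uniform upper bound on $h^0(E|_L)$. Using the restriction sequence
\[
0 \to E(-1) \to E \to E|_L \to 0
\]
and descending on twists until the slope becomes non-positive (at which point $h^0$ vanishes by stability for stable $E$), one telescopes to a bound which should simplify to $\binom{c_1+2}{2} = h^0(\OO_{\PP^2}(c_1))$. The alternative $r$ in the $\max$ dominates in the regime where $c_1$ is small relative to $r$; sharpness in that range comes from a direct construction of a sheaf with an $r$-dimensional space of sections, essentially the Steiner-type bundle used in part (2).

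\textbf{Part (2).} I would construct $E \in M(\vv)$ with $h^0(E) \geq r$ directly. A natural candidate is a Steiner-type sheaf such as $\coker(\OO(-1)^a \to \OO^b)$, modified (by twisting or taking kernels) to match the prescribed $\vv$. The map $H^0(\OO^b) \to H^0(E)$ is surjective since $h^0(\OO(-1)) = 0$, supplying the required sections. Stability is then checked by a standard restriction-to-a-line argument, or by deforming from a nearby explicitly semistable sheaf and invoking openness of stability.

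\textbf{Part (3).} When $\ch_1(\vv) = 1$, any $E \in B^k(\vv)$ contains $\OO^k$ (from $k$ independent sections) and fits into
\[
0 \to \OO^k \to E \to Q \to 0,
\]
with $Q$ a torsion-free sheaf of rank $r-k$ and $c_1(Q) = 1$. I would parameterize $B^k(\vv)$ via such extensions: $Q$ ranges over its moduli space (irreducible by the $c_1 = 1$ classification on $\PP^2$), and the extension classes form an affine bundle over it. Irreducibility of $B^k(\vv)$ then follows from irreducibility of base and fibers, while the dimension matches the expected one via a routine Euler-characteristic computation of $\Ext^1(Q, \OO^k)$ against $\hom$.

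\textbf{Part (4).} I would exhibit two distinct irreducible loci inside $B^r(\vv)$: an \emph{expected} component, parameterizing sheaves $E$ with a generic $r$-dimensional space of sections and generic cokernel; and an \emph{unexpected} component, parameterizing sheaves containing a distinguished subsheaf such as $\OO(\lfloor \mu \rfloor)$ (or a more elaborate piece forced by $\mu > 1/2$ non-integer), which contributes $\binom{\lfloor\mu\rfloor+2}{2}$ sections and forces $h^0(E)$ well above $r$. A dimension count in the regime $\Delta \gg 0$ shows that the unexpected locus has dimension strictly greater than $\expdim B^r(\vv)$, so cannot be contained in the expected component. The main obstacle, and the most delicate step of the whole theorem, is verifying that this unexpected locus is genuinely an irreducible \textbf{component} rather than a sub-locus of some larger one. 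This requires controlling, via $\Ext$ computations and semicontinuity, the behavior of sections under deformation of the distinguished subsheaf, ensuring that a generic small deformation preserves the structure producing the extra sections instead of collapsing into the expected locus.
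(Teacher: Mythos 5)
Your outline for part (3) matches the paper's proof of Theorem \ref{irred} in substance: every $E \in B^k(\vv)$ with $\ch_1 = 1$ sits in $0 \to \OO^k \to E \to Q \to 0$ with $Q$ semistable, and $B^k(\vv)$ is swept out by a projective bundle of extensions over the (irreducible) moduli space of the quotients, with the dimension count coming from $\chi(Q,\OO^k)$. The pieces you gloss over there -- stability of the extension sheaves (the paper's Proposition \ref{extremal} and Lemma \ref{corank0lemma}) and the rank-zero quotient case $k = r$, where $Q = \OO_L(a)$ -- are real work but your strategy is the right one.

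The other parts have genuine gaps. For part (1), restriction to a line and telescoping cannot produce the bound $h^0(E) \leq r(E)$ in the regime $r \geq p(c_1)$: for a general line $L$, $h^0(E|_L)$ is on the order of $c_1 + \#\{i : a_i \geq 0\}$, which can be as large as $c_1 + r > r$, so the telescope overshoots, and at slope $0$ the claim that $h^0$ vanishes is false ($\OO^r$ is semistable of slope $0$). The paper's Case I argument is entirely different: it shows any $r$ independent sections span a full-rank subsheaf (by a Harder--Narasimhan analysis of partial evaluation maps), hence define a morphism $G(r, H^0(E)) \to \PP H^0(\OO(c_1))$, which must be constant because a Grassmannian of dimension $\geq p(c_1)$ has Picard rank $1$ and cannot map nonconstantly to a lower-dimensional projective variety; this forces $h^0(E) \leq r$. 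Even in the complementary Case II the line restriction only works after first proving that $p(c_1)$ sections have zero-dimensional dependency locus, which is what controls the image of $H^0(E)$ in $H^0(E|_L)$. For part (2), a Steiner cokernel has a forced (minimal) discriminant, whereas $\vv$ is arbitrary with $\Delta(\vv)$ possibly far above minimal; you must raise $\Delta$ by elementary modifications $0 \to E \to E_0 \to \OO_Z \to 0$ chosen so that $H^0(E_0) \to H^0(\OO_Z)$ vanishes, and when $\mu \leq 1/3$ the minimal-discriminant sheaf need not have $\chi \geq r$ at all, which is why the paper instead inducts on rank through extremal decompositions $\vv = \vv' + \vv''$ and proves the connecting map $\Ext^1(E'',E') \to \Hom(H^0(E''), H^1(E'))$ has nontrivial kernel by a Riemann--Roch count. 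For part (4), the distinguished subsheaf $\OO(\lfloor\mu\rfloor)$ is the wrong object: for $1/2 < \mu < 1$ it is just $\OO$ and contributes a single section. The paper's unexpected locus $Z_2$ instead takes the extremal subsheaf $E'$ of slope $> 1/3$ (so $\chi(E') = r' + \epsilon'$ with $\epsilon' > 0$) and quotients $E''$ ranging over the smaller Brill--Noether locus $B^{r''-\epsilon}(\vv'')$; the surplus $\epsilon$ makes $\dim Z_2$ grow like $(r+\epsilon)k$ while the expected component $Z_1$ grows like $rk$. Finally, the delicate step you flag -- showing the unexpected locus is a component -- is sidestepped in the paper by arguing in the opposite direction: one proves the \emph{expected} locus $Z_1$ is an irreducible component (any irreducible subvariety of $B^r(\vv)$ containing it is forced back into it by the fibration over smooth curves), so the higher-dimensional $Z_2$ necessarily lies in a different component of unexpected dimension.
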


Since the Brill-Noether loci are nested, Theorem \ref{main} (\ref{main1}) implies the Brill-Noether loci $B^k(\vv)$ are nonempty for all $0 \leq k \leq r$. The bound (\ref{eq:mainbound}) is also sharp for the line bundles $\OO(d)$ and the Lazarsfeld-Mukai-type bundles $M_d = \coker(\OO(-d) \rightarrow \OO^{h^0(\OO(d))})$, including $M_1 = T_{\PP^2}(-1)$.

Our main technical tools for studying Brill-Noether loci are parametrizations of certain moduli spaces $M(\vv)$ and Brill-Noether loci $B^k(\vv)$ by projective bundles whose fibers are (projectivizations of) extension spaces. The applications to Brill-Noether loci are often of the following form. A sheaf $E$ on $\PP^2$ has an \textit{evaluation map} on global sections
    \[\ev_E: H^0(\PP^2, E) \otimes \OO_{\PP^2} \rightarrow E.\]
Assume that the general $E \in M(\vv)$ has $m$ global sections, i.e., $h^0(\PP^2, E) = m$, and $m \leq r(E)$. When $\ev_E$ has full rank $E$ sits as an extension class
    \[[0 \rightarrow H^0(\PP^2, E) \otimes \OO_{\PP^2} \stackrel{\ev_E}{\rightarrow} E \rightarrow E' \rightarrow 0] \in \Ext^1(E', H^0(\PP^2, E') \otimes \OO_{\PP^2}).\]
If $E'$ is semistable with Chern character $\vv' \in K(\PP^2)$, we may form the projective bundle $\PP$ over $M(\vv')$ whose fiber over $E'$ is the projective space $\PP \Ext^1(E', H^0(\PP^2, E') \otimes \OO_{\PP^2})$. If the general such extension is stable, we obtain a classifying rational map $\phi: \PP \dashrightarrow M(\vv)$, which we call the \textit{extension parametrization} of $M(\vv)$.

The critical technical challenge in constructing such parametrizations is in proving the stability of extension sheaves. When the rank of $E'$ is 0, so $\chi(E) = r(E)$, we accomplish this by studying smooth curves in $\PP^2$ realized as the support of $E'$ and the moduli spaces $\Pic^d_{\mathcal{C}/U}$ of these sheaves (Sections \ref{boundssection} and \ref{redsection}). The central inputs to our results are the classification of stable Chern characters on $\PP^2$  by Dr\'ezet-Le Potier (\cite[Th\'eor\`eme C]{DLP}) and the computation of the cohomology of the general stable bundle in $M(\vv)$ by G\"ottsche-Hirschowitz (Theorem \ref{GH}). Our methods likely extend to other surfaces for which these two notions are understood, e.g., Hirzebruch surfaces and K3 surfaces of Picard rank 1 (see \cite{CH3} and \cite{CNY} respectively).

\subsection{Organization of the paper}
In Section \ref{preliminaries} we recall basic facts about moduli spaces of sheaves on $\PP^2$. In Section \ref{extparamsection} we introduce extension parametrizations, which are the main tools used in further sections.

In Section \ref{boundssection} we prove emptiness and non-emptiness statements for Brill-Noether loci. In Section \ref{redsection} we prove irreducibility and reducibility statements for Brill-Noether loci. 

\section{Preliminaries} \label{preliminaries}
\subsection{Stability on $\PP^2$} 
In this section we collect basic facts about stability for coherent sheaves on $\PP^2$. We refer the reader to the books by Le Potier \cite{LP} and Huybrechts-Lehn \cite{HL} for further details.

\subsubsection{Numerical invariants \& stability}
All sheaves in this paper will be coherent, but not necessarily torsion-free. Let $E$ be a coherent sheaf on $\PP^2$. The Hilbert polynomial $P_E(m) = \chi(E(m))$ is of the form
    \[P_E(m) = \alpha_d \frac{m^d}{d!} + O(m^{d-1}),\]
and we define the reduced Hilbert polynomial $p_E(m)$ to be
    \[p_E(m) = P_E(m)/\alpha_d.\]
A pure-dimensional coherent sheaf $E$ on $\PP^2$ is (semi)stable if for all nontrivial subsheaves $F \hookrightarrow E$ we have $p_F < (\leq) p_E$, where polynomials are compared for sufficiently large $m$. We will assume throughout this paper that a semistable sheaf has positive rank.

Given a character $\vv \in K(\PP^2)$, we define, respectively, the slope and the discriminant of $\vv$ by
    \[\mu(\vv) = \frac{\ch_1(\vv)}{\ch_0(\vv)}, \quad \Delta(\vv) = \frac{1}{2} \mu(\vv)^2 - \frac{\ch_2(\vv)}{\ch_0(\vv)}.\]
On $\PP^2$ these classes may be considered as rational numbers. When $\vv = \ch(E)$ for a coherent sheaf $E$ we set $\mu(E) = \mu(\vv)$ and $\Delta(E) = \Delta(\vv)$. We have
    \[\mu(E \otimes F) = \mu(E) + \mu(F), \quad \Delta(E \otimes F) = \Delta(E) + \Delta(F)\]
for coherent sheaves $E, F$ on $\PP^2$.

In terms of these invariants, the Riemann-Roch theorem takes the form
    \[\chi(E) = r(E)(p(\mu(E)) - \Delta(E)),\]
where
    \[p(x) = p_{\OO_{\PP^2}}(x) = \frac{1}{2}(x^2+3x+2).\]
More generally for sheaves $E,F$ on $\PP^2$, we set $\ext^i(E,F) = \dim \Ext^i(E,F)$. The relative Riemann-Roch theorem states
    \[\chi(E,F) = \sum (-1)^i \ext^i(E,F) = r(E)r(F)(p(\mu(E)-\mu(F)) - \Delta(E) - \Delta(F)),\]
and similarly for Chern characters.

Additionally, we may define slope-stability for sheaves on $\PP^2$: $E$ is slope-(semi)stable if for all subsheaves $F \hookrightarrow E$ of smaller rank, we have $\mu(F) < (\leq) \mu(E)$. Slope-stability implies stability, and is often easier to check.

\subsubsection{Classification of stable bundles}
The Chern characters of stable bundles on $\PP^2$ have been classified (see, e.g., \cite{DLP}, \cite{LP}). The classification is important for our results, and we sketch it in this subsection.

Recall that we say a character $\vv \in K(\PP^2)$ is stable if there is a stable sheaf of Chern character $\vv$, and that we assume stable characters have $\ch_0(\vv) > 0$. Stability imposes strong conditions on the numerical data of $\vv$. These conditions are determined by exceptional bundles on $\PP^2$. A sheaf $E$ on $\PP^2$ is \textit{exceptional} if $\Ext^i(E,E) = 0$ for $i > 0$.

By a theorem of Dr\'ezet \cite{Drezet-BSS}, every exceptional sheaf on $\PP^2$ is a stable vector bundle. A stable bundle $E$ is exceptional if and only if $\Delta(E) < 1/2$, by Riemann-Roch \cite[Proposition 16.1.1]{LP}. By definition, exceptional bundles are rigid and their moduli spaces consist of a single reduced point \cite[Corollary 16.1.5]{LP}. Exceptional bundles on $\PP^2$ include the line bundles $\OO_{\PP^2}(k)$ and the tangent bundle $T_{\PP^2}$, and all others can be formed from these by a process called mutation \cite{Drezet-BSS}. 

The slopes of exceptional bundles can be completely described \cite[\S 16.3]{LP}; set $\mathcal{E}$ to be the set of slopes of exceptional bundles. For an exceptional bundle $E$ of slope $\alpha$ we write $E = E_{\alpha}$ and $\Delta(E) = \Delta_{\alpha}$. We define the Dr\'ezet-Le Potier curve to be the following piecewise-polynomial curve in the $(\mu, \Delta)$-plane:
    \[\delta(\mu) = \sup_{\alpha \in \mathcal{E} : |\mu - \alpha| < 3} (p(-|\mu - \alpha|) - \Delta_{\alpha}).\]
A plot of an approximation of the Dr\'ezet-Le Potier curve between 0 and 1 is shown in Figure \ref{fig:DLP}. The precise relationship of exceptional bundles to stable characters is due to Dr\'ezet and Le Potier (\cite[Th\'eor\`eme C]{DLP}, \cite{LP}): a character $\vv \in K(\PP^2)$ is stable if and only if $c_1 := r\mu \in \Z$, $\chi := r(P(\mu) - \Delta) \in \Z$, and $\Delta(\vv) \geq \delta(\mu(\vv))$, or $\vv$ is exceptional. A character satisfying the first two conditions in the theorem is called integral. Each exceptional bundle $E_{\alpha}$ of slope $\alpha$ determines two ``branches'' of the Dr\'ezet-Le Potier curve, on the left and right sides of the vertical line $\mu = \alpha$. The characters $\vv$ on the branch to the right of $\mu = \alpha$ satisfy $\chi(\vv, E_{\alpha}) = 0$, and we call this branch the $^{\perp}E_{\alpha}$-branch of the curve (or left orthogonal branch), and characters $\vv$ on the branch to the left of $\mu = \alpha$ satisfy $\chi(E_{\alpha}, \vv) = 0$ and we call this the $E_{\alpha}^{\perp}$-branch (or right orthogonal branch).

\begin{figure}
    \centering
    \includegraphics{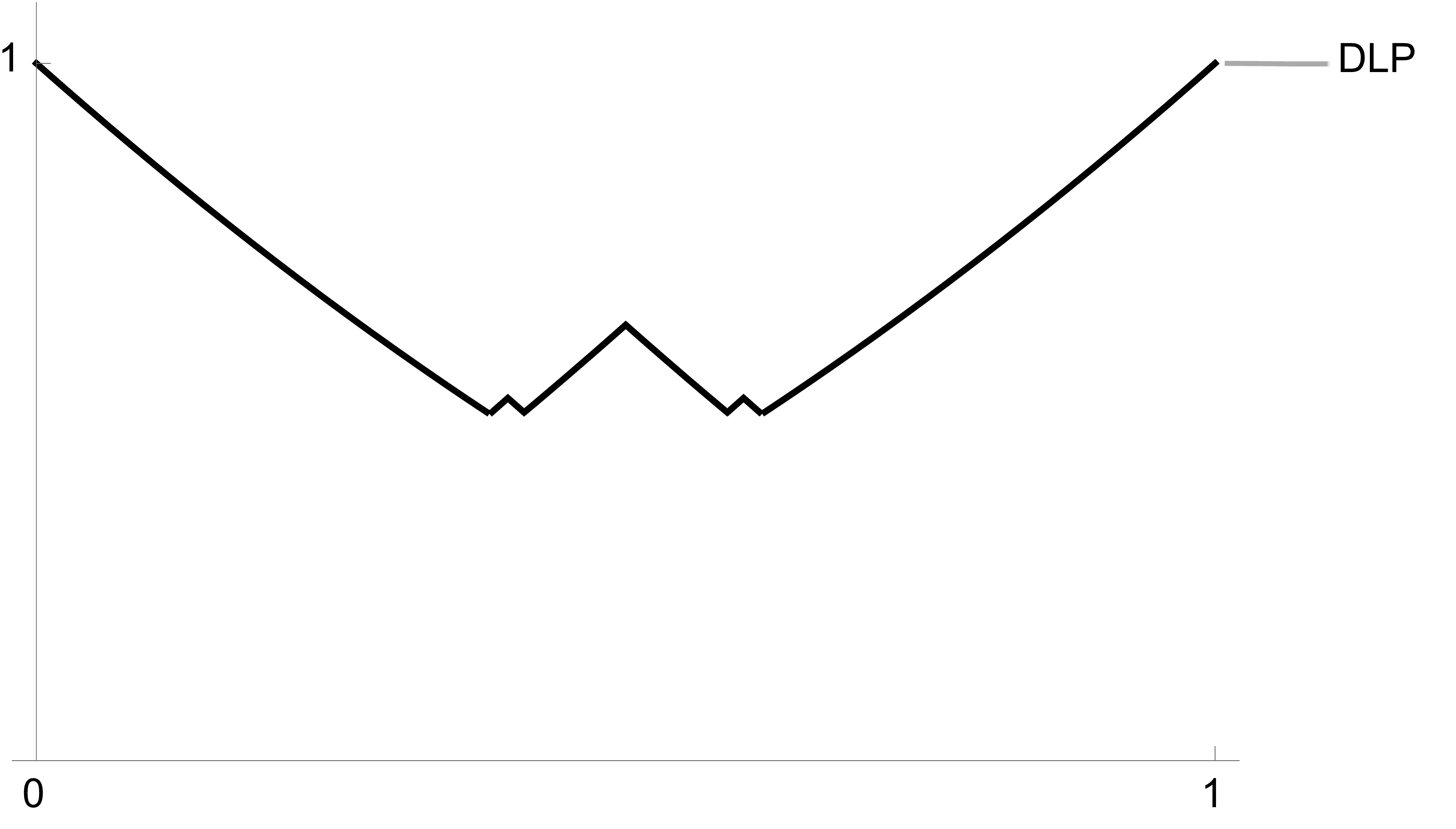}
    \caption{The Dr\'ezet-Le Potier curve $\Delta = \delta(\mu)$}
    \label{fig:DLP}
\end{figure}

Let $\vv \in K(\PP^2)$ be a character and set $\ch_0(\vv) = r$, $\mu(\vv) = \mu$, and $\Delta(\vv) = \Delta$. When $\vv$ is stable with $r>0$, Dr\'ezet and Le Potier showed the moduli space of stable sheaves with Chern character $\vv$ is a normal, irreducible, factorial projective variety of dimension
    \[\dim M(\vv) = r^2(2\Delta - 1) + 1\]
(\cite[Theorem 17.0.1]{LP}). When $\vv$ is a non-exceptional stable character of rank $> 1$, the general bundle $E \in M(\vv)$ is slope-stable (\cite[Corollaire 4.12]{DLP}). Slope-stability allows us to use elementary modifications:

\subsection{Elementary modifications} \label{jumpbyr}
If $\Delta_0$ is the discriminant of a stable non-exceptional sheaf $E$, then by integrality of the Euler characteristic one can see that the discriminant of another stable sheaf of slope $\mu(E)$ and rank $r(E)$ differs from $\Delta_0$ by an integral multiple of $\frac{1}{r(E)}$. One can in fact obtain any such discriminant larger than or equal to $\delta(\mu(E))$ by taking elementary modifications: choosing a point $p \in \PP^2$ and a surjection $E \rightarrow \OO_p$ one forms the exact sequence
    \[0 \rightarrow E' \rightarrow E \rightarrow \OO_p \rightarrow 0.\]
The sheaf $E'$ is not locally free, but it has $r(E') = r(E)$, $\mu(E') = \mu(E)$, and $\Delta(E') = \Delta(E) + 1/r(E)$. One can check that when $E$ is slope-stable, so is $E'$ (but this is not true for stability). In this way one can construct slope-stable bundles of discriminant $\Delta_0 + k/r(E)$ for any non-negative integer $k$. See \cite[Lemma 2.7]{CHBN} for details.

\subsection{Brill-Noether loci}
We now define the Brill-Noether loci and endow them with a natural determinantal structure, which leads to a lower bound on the dimension of their components.

The expected value of $h^0(E)$ when $E$ is stable is provided by the following well-known theorem due to G\"ottsche-Hirschowitz.

\begin{theorem}[\cite{GH}] \label{GH}
When $\ch_0(\vv) \geq 2$, the general sheaf $E \in M(\mathbf{v})$ has at most one nonzero cohomology group:
\begin{enumerate}
    \item if $\chi(E) \geq 0$ and $\mu(E) > -3$, then $h^1(E) = h^2(E) = 0$;
    \item if $\chi(E) \geq 0$ and $\mu(E) < -3$, then $h^0(E) = h^1(E) = 0$;
    \item if $\chi(E) < 0$, $h^0(E) = h^2(E) = 0$.
\end{enumerate}
\end{theorem}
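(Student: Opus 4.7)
The plan is to reduce the three cases to a single statement,
\[
(\star) \quad h^1(E) = \max\{0, -\chi(E)\} \text{ for the general } E \in M(\vv),
\]
and then establish $(\star)$ by constructing a single representative via induction on the discriminant. The vanishing $h^2 = 0$ in case (1) and in the $\mu > -3$ subcase of (3) follows from Serre duality $H^2(E) \cong \Hom(E, \OO_{\PP^2}(-3))^\vee$ together with the fact that a semistable sheaf admits no nonzero map to one of strictly smaller slope. Symmetrically, $\mu(\vv) < 0$ forces $h^0(E) = 0$ on semistable $E$, handling (2) and the $\mu < -3$ subcase of (3). The dualization $E \mapsto E^\vee(-3)$ preserves slope-stability on the generic (locally free) locus of $M(\vv)$, sends slope $\mu$ to $-\mu - 3$, fixes $\chi$, and swaps $h^0$ with $h^2$ while fixing $h^1$; so it identifies the $\mu < -3$ content of the theorem with its $\mu > 0$ content, and it suffices to prove $(\star)$ when $\mu > -3$.

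By upper-semicontinuity of $h^i(\cdot)$ on the irreducible moduli space $M(\vv)$, $(\star)$ reduces to exhibiting a single $E \in M(\vv)$ achieving the expected cohomology. My approach is induction on $\Delta$ along a fixed slope line $\mu(\vv) = \mu$. By Remark \ref{jumpbyr}, the stable discriminants at this slope are exactly $\delta(\mu) + k/r$ for $k = 0, 1, 2, \ldots$, and each elementary modification $0 \to E' \to E \to \OO_p \to 0$ moves $\Delta$ up by $1/r$ while preserving slope-stability. Using $h^2(E) = 0$, the associated long exact sequence is
\[
0 \to H^0(E') \to H^0(E) \to \C \to H^1(E') \to H^1(E) \to 0.
\]
A case split now propagates $(\star)$ from $E$ to $E'$: if $h^0(E) > 0$, a generic choice of $p$ makes the evaluation $H^0(E) \to \C$ surjective, so $h^0$ drops by $1$ while $h^1$ is preserved; if $h^0(E) = 0$, the sequence gives $h^0(E') = 0$ and $h^1(E') = h^1(E) + 1$. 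In either case, the target $\max\{0, -\chi(E')\}$ is matched.

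The real content, and the main obstacle, is the base case $k = 0$, where the discriminant sits exactly on the Drézet-Le Potier curve. A minimal-discriminant stable character lies on a branch $E_\alpha^\perp$ or $^\perp E_\alpha$ determined by an exceptional bundle $E_\alpha$, and the corresponding stable sheaves admit canonical three-term resolutions by twists of $E_\alpha$ and two adjacent exceptional bundles. Verifying $(\star)$ for these requires direct computation using the cohomology of exceptional bundles (each has exactly one nonzero cohomology group of known dimension), and the bookkeeping must be carried out uniformly across every branch of the fractal set $\mathcal{E}$ of exceptional slopes. This is where the argument has genuine content: once it is settled, the inductive sweep above together with the Serre-duality reduction of the first paragraph completes the proof.
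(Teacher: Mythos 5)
The paper offers no proof of this statement---it is quoted from G\"ottsche--Hirschowitz---so the only question is whether your argument is complete, and it is not. The reductions you perform are sound: Serre duality plus semistability kills $h^2$ when $\mu > -3$, negativity of slope kills $h^0$, the twisted dual $E \mapsto E^\vee(-3)$ exchanges the $\mu < -3$ and $\mu > 0$ regimes, semicontinuity on the irreducible $M(\vv)$ reduces everything to exhibiting one sheaf with $h^1 = \max\{0,-\chi\}$, and the elementary-modification step correctly propagates this along $\Delta \mapsto \Delta + 1/r$ (with slope-stability preserved, per Remark \ref{jumpbyr}, so the modified sheaf really is a point of the next moduli space). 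But the base case---one sheaf of minimal discriminant, for \emph{every} rank $\geq 2$ and every admissible slope, with the expected cohomology---is exactly where you stop and write that ``this is where the argument has genuine content.'' That is the theorem; everything before it is standard bookkeeping. As described, the base case would require controlling the generic rank of the maps in a Beilinson/Gaeta-type resolution by exceptional bundles uniformly over all branches of the Dr\'ezet--Le Potier curve, which you neither carry out nor reduce to anything verifiable.

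You should also be aware that the standard proof avoids this base case entirely, by the device the paper itself uses in Lemma \ref{prioritary}: semistable sheaves are prioritary, the stack of prioritary sheaves with fixed $(r, c_1, \chi)$ is irreducible (Hirschowitz--Laszlo), and that stack visibly contains sheaves with at most one nonzero cohomology group---namely general elementary modifications of balanced sums $\OO(a)^{\oplus m} \oplus \OO(a+1)^{\oplus n}$, for which $h^1 = 0$ and $h^0, h^2$ cannot both be nonzero. One then specializes from the prioritary stack to the open substack of semistable sheaves. Enlarging the irreducible family in this way is what lets you exhibit a ``general'' member by hand; restricting attention to the semistable locus, as you do, forces you onto the Dr\'ezet--Le Potier curve and into the computation you left undone.
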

    
When a sheaf $F \in M(\vv)$ has cohomology groups as prescribed in Theorem \ref{GH}, we will say that it is cohomologically general or has general cohomology. By semicontinuity, when $r \geq 2$ the locus of sheaves with general cohomology forms an open subset of the moduli space; the Brill-Noether loci $B^k(\vv)$ with $k > \chi(\vv)$ make up the complement of this open set. Let $M^s(\vv) \subseteq M(\vv)$ denote the locus of strictly stable sheaves.

\begin{definition}
For a stable Chern character $\mathbf{v} \in K(\PP^2)$, the \textit{$k$th Brill-Noether locus} $B^k(\mathbf{v}) \subseteq M(\mathbf{v})$ is defined as a set to be the closure
    \[B^k(\mathbf{v}) = \overline{\{E \in M^s(\mathbf{v}) : h^0(E) \geq k\}} \subseteq M(\mathbf{v}).\]
$B^k(\mathbf{v})$ is the locus where $h^0$ jumps by at least $k - \chi(\vv)$.
\end{definition}

The Brill-Noether loci are clearly nested: $B_k(\mathbf{v}) \supseteq B_{k+1}(\mathbf{v})$ for all $k$. 

\begin{remark}
We restrict to stable sheaves in the definition because the number of global sections is not constant in $S$-equivalence classes, so $h^0([E])$ is not defined for strictly semistable points $[E] \in M(\mathbf{v})$. 
\end{remark}

Note that for a stable sheaf $F$ with $\mu(F) < 0$, any map $\OO \rightarrow F$ corresponding to $0 \neq s \in H^0(F)$ would be destabilizing, so $H^0(F) = 0$ and there are no Brill-Noether loci in the associated moduli space. Thus in what follows we assume the slope is nonnegative. In these cases, we expect $h^0(F) = \max\{0, \chi(F)\}$. When $\mu < -3$, we have $h^0(E) = 0$ but $h^2(E) = h^0(E^{\vee}(-3))$ by Serre-duality, where $\mu(E^{\vee}(-3)) > 0$. Thus $h^2$-jumping loci are Serre-dual to $h^0$-jumping loci when the slope is sufficiently negative. When $-3 < \mu(E) < 0$, Serre duality implies that $h^0(E) = h^2(E) = 0$, so there are no Brill-Noether loci for any cohomology.

We endow $B^k(\mathbf{v})$ with a determinantal scheme structure, as follows (\textit{cf.} \cite[Proposition 2.6]{CHW} and \cite[\S 2]{CMR}). Let $\mathscr{E}/S$ be a proper flat family of stable sheaves on $\PP^2$ of Chern character $\mathbf{v}$. We will examine the relative Brill-Noether locus $B^k_S(\mathbf{v})$; when $M^s(\mathbf{v})$ admits a universal family $\mathscr{U}$, this will give the appropriate scheme structure on $B^k(\mathbf{v})$. In general, one can work on the stack $\mathcal{M}^s(\mathbf{v})$ and take the image in the coarse moduli space $M^s(\mathbf{v})$ (see \cite[Theorem 4.16 and Example 8.7]{A}). 

Choose $s \in S$ and let $d \gg 0$ be sufficiently large and let $C \subseteq \PP^2$ be a general-enough curve of degree $d$, chosen so that the singularities of $\mathscr{E}_s$ do not meet $C$. After replacing $S$ by an open subset, we can assume $C$ does not meet the singularities of any member of $\mathscr{E}$. Let $p: S \times \PP^2 \rightarrow S$ and $q: S \times \PP^2 \rightarrow \PP^2$ be the projections. Form the exact sequence
    \[0 \rightarrow \mathscr{E} \rightarrow \mathscr{E} \otimes q^*\OO(d) \rightarrow \mathscr{E} \otimes (q^*\OO(d)|_C) \rightarrow 0\]
of sheaves on $S \times \PP^2$. 

Choosing $d$ large enough so that each of the cohomology groups $H^1(\mathscr{E}_s \otimes q^*\OO(d))$ vanish, we conclude $R^1p_*(\mathscr{E} \otimes q^*\OO(d)) = 0$ and we obtain the exact sequence
    \[0 \rightarrow p_*\mathscr{E} \rightarrow p_*(\mathscr{E} \otimes q^*\OO(d)) \stackrel{\phi}{\rightarrow} p_*(\mathscr{E} \otimes (q^*\OO(d)|_C)) \rightarrow R^1p_*(\mathscr{E}) \rightarrow 0\]
of sheaves on $S$. When $d$ is sufficiently large the map $\phi$ is a map between bundles of the same rank, and the $k$th determinantal variety associated to $\phi$ is supported on those $s \in S$ such that $h^0(\mathscr{E}_s) \geq k$, or equivalently $h^1(\mathscr{E}_s) \geq k - \chi(\vv)$. Thus scheme-theoretically we define $B^k(\mathbf{v})$ to be this determinantal variety, and observe that its support agrees with the set described above.

The construction above does not depend on the choice of $d$ or $C$; precisely, over an open subset $U$ of $S$ over which the source and target of $\phi$ are trivial, the Fitting ideal $\text{Fitt}_0(R^1p_*\mathcal{E})$ is generated by $\det \phi$, viewed as a section of $\OO_U$. See \cite[Chapter 20]{eisenbud}. The Fitting ideal is the ideal sheaf of $B^k(\vv)$, and gives the desired scheme structure.

Being a determinantal variety, the codimension of a component $Z$ of $B^k(\mathbf{v})$ is bounded:
    \[\text{codim}_{M(\mathbf{v})}(Z) \leq k(k-\chi(\vv)).\]
Thus since, for a general bundle $E \in M(\mathbf{v})$, we have
    \[\dim(M(\mathbf{v})) = \ext^1(E,E) = r(\mathbf{v})^2(2\Delta(\mathbf{v})-1)+1\]
the dimension of a component $Z$ of $B^k(\mathbf{v})$ is bounded below:
    \[\dim(Z) \geq r(\mathbf{v})^2(2\Delta(\mathbf{v})-1)+1 - k(k-\chi(\vv)).\]

\begin{definition}
The \textit{expected codimension} of the $k$th Brill-Noether locus $B^k(\vv) \subseteq M(\vv)$ is
    \[\text{expcodim}^k(\vv) = k(k-\chi(\vv)).\]
\end{definition}

\section{Extension parametrizations of moduli spaces} \label{extparamsection}
In this section we study the stability of extension sheaves. When extension sheaves are semistable, we obtain families of sheaves over loci in the associated moduli space.

\begin{definition}
Suppose that $\vv, \vv', \vv'' \in K(\PP^2)$ are stable characters with $\vv=\vv'+\vv''$. Let $\mathscr{E}' /S'$ and $\mathscr{E}'' /S''$ be families of semistable sheaves of Chern characters $\vv'$ and $\vv''$, respectively. If the general extension $E$
    \[0 \rightarrow \mathscr{E}'_{s'} \rightarrow E \rightarrow \mathscr{E}''_{s''} \rightarrow 0\]
with $s' \in S'$ and $s'' \in S''$ is semi-stable, then an induced rational map
    \begin{center}
    \begin{tikzcd}
        \PP \Ext^1(\mathscr{E}''_{s''}, \mathscr{E}'_{s'}) \arrow[d] \arrow[r, dashed] & M(\vv) \\
        S' \times S''
    \end{tikzcd}
    \end{center}
defined on a locus where $\Ext^1(\mathscr{E}''_{s''}, \mathscr{E}'_{s'})$ has constant rank, is called an extension parametrization associated to $M(\vv)$. 
\end{definition}

Stability of these extensions is the central technical challenge of this paper. However, there is a class called the \textit{extremal} extensions whose stability is not hard to show, which we now recall.

\subsection{Extremal extensions} \label{extremalsection}
Stability of extension sheaves is known when the slope of the subsheaf is \textit{extremal} with respect to the extension sheaf. The following definition is derived from \cite[Definition 4.1]{CH1}, but we make some slight modifications for our purposes.

\begin{definition}\label{extremaldef}
A Chern character $\vv'$ on $\PP^2$ is called \textit{extremal} for $\vv$ if $\vv'$ satisfies:
    \begin{enumerate}
    \item[(D1) \label{D1}] $\ch_0(\vv') < \ch_0(\vv)$; 
    \item[(D2) \label{D2}] $\mu(\vv')\leq \mu(\vv)$; 
    \item[(D3) \label{D3}] $\vv'$ is stable;
    \end{enumerate}
and furthermore, for any $\ww \in K(\PP^2)$ satisfying (\hyperref[D1]{D1})-(\hyperref[D3]{D3}):
    \begin{enumerate}
    \item[(E1) \label{E1}] $\mu(\mathbf{w})\leq \mu(\vv')$;
    \item[(E2) \label{E2}] If $\mu(\ww)=\mu(\vv')$, then $\Delta(\mathbf{w}) \geq \Delta(\vv')$;
    \item[(E3) \label{E3}] If $\mu(\ww)=\mu(\vv')$ and $\Delta(\ww)=\Delta(\vv')$, then $\ch_0(\vv')\leq \ch_0(\mathbf{w})$.
    \end{enumerate}
A triple $\Xi = (\mathbf{v}', \mathbf{v}, \mathbf{v}'')$ of Chern characters on $\PP^2$ is called \textit{extremal} if $\vv' + \vv'' = \vv$, where $\vv'$ is the extremal character for $\vv$, and $\vv''$ is stable.
\end{definition}

It is not obvious \textit{a priori} that extremal decompositions exist for any $\vv \in K(\PP^2)$ since $\vv''$ may not be stable, but at least it is clear when $\vv$ has large discriminant, see e.g. \cite[Lemma 4.3]{CH1}, or simply observe that $\Delta(\vv'')\gg 0$ when $\Delta(\vv)\gg 0$.

Our main use of Definition \ref{extremaldef} will be to construct extension parametrizations associated to moduli spaces. The following simple observation is crucial.

\begin{proposition} \label{extremal}
Suppose $\Xi = (\vv', \vv, \vv'')$ is an extremal triple, and we have a nonsplit short exact sequence
    \[0 \rightarrow E' \rightarrow E \rightarrow E'' \rightarrow 0\]
with $E' \in M(\vv')$ and $E'' \in M(\vv'')$. Then $E$ is semistable. 
\end{proposition}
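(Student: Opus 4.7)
The plan is proof by contradiction using the extremality conditions (E1)--(E3). Suppose $E$ is not semistable and let $F \subsetneq E$ be a saturated destabilizing subsheaf, which we may take to be semistable (e.g., the first term of the Harder--Narasimhan filtration), so that $r(F) < r(\vv)$ and $p_F > p_E$. Passing to a Jordan--H\"older factor $F_0$ of $F$ produces a stable subsheaf with the same reduced Hilbert polynomial as $F$; setting $\ww = \ch(F_0)$, conditions (D1) and (D3) hold automatically, since $r(F_0) \leq r(F) < r(\vv)$ and $F_0$ is stable.

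The central step is to verify (D2), i.e., $\mu(\ww) \leq \mu(\vv)$, so that the extremality conditions can be applied. Intersecting $F_0$ with the defining short exact sequence yields
\[
0 \to K \to F_0 \to G \to 0, \qquad K = F_0 \cap E' \subseteq E', \ G \subseteq E'' \text{ the image of } F_0.
\]
Semistability of $E'$ and $E''$ bounds $\mu(K) \leq \mu(\vv')$ and $\mu(G) \leq \mu(\vv'')$ where nonzero, so by additivity of rank and first Chern class, $\mu(F_0)$ is a convex combination of $\mu(K)$ and $\mu(G)$ and is at most the analogous combination of $\mu(\vv')$ and $\mu(\vv'')$. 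If $G = 0$, then $F_0 \subseteq E'$ gives $\mu(F_0) \leq \mu(\vv') \leq \mu(\vv)$ directly from (D2); if $G \neq 0$, a careful rank comparison against the decomposition $\vv = \vv' + \vv''$ (using that $F_0$ cannot be weighted too far toward $E''$ when $r(F_0) < r(\vv)$) yields the same bound.

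Once (D2) is established, (E1) gives $\mu(\ww) \leq \mu(\vv') \leq \mu(\vv)$. If either inequality is strict, then $\mu(F_0) < \mu(\vv)$ and the leading-order coefficient of $p_{F_0} - p_E$ is negative, contradicting $p_F > p_E$. Otherwise equality $\mu(F_0) = \mu(\vv') = \mu(\vv) = \mu(\vv'')$ holds: apply (E2) to both $\ww$ and to $\vv''$ (which satisfies (D1)--(D3) in this boundary case) to conclude $\Delta(\ww) \geq \Delta(\vv')$ and $\Delta(\vv'') \geq \Delta(\vv')$. Combined with the identity $\Delta(\vv) = (r'\Delta(\vv') + r''\Delta(\vv''))/r$ that follows from additivity when all slopes coincide, and with the rank inequality from (E3) when the discriminants match, one deduces $\Delta(F_0) \geq \Delta(\vv)$, contradicting $p_F > p_E$.

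The main obstacle is the careful bookkeeping in establishing (D2) in the case $G \neq 0$: the bound on $\mu(F_0)$ via a convex combination of $\mu(\vv')$ and $\mu(\vv'')$ does not \emph{a priori} yield $\mu(F_0) \leq \mu(\vv)$, and one must genuinely exploit $\vv = \vv' + \vv''$ along with the strict rank drop $r(F_0) < r(\vv)$ to exclude the pathological configuration in which $F_0$ projects onto a large piece of $E''$ with slope pulled above $\mu(\vv)$.
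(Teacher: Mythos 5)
Your overall strategy---feed a destabilizing object into the extremality axioms (D1)--(D3) and then derive a contradiction from (E1)--(E3)---is the right one, but you have chosen the wrong destabilizing object, and this creates a gap you flag but do not close. Working with a stable destabilizing \emph{subsheaf} $F_0 \subseteq E$, you must verify (D2), i.e.\ $\mu(\ch(F_0)) \leq \mu(\vv)$; yet a destabilizing subsheaf satisfies $p_{F_0} > p_E$ and hence $\mu(F_0) \geq \mu(\vv)$ to begin with, so (D2) is exactly what fails in the situation you are trying to rule out. Your convex-combination bound only gives $\mu(F_0) \leq \max\{\mu(\vv'), \mu(\vv'')\} = \mu(\vv'')$, and since $\mu(\vv'') \geq \mu(\vv)$ (with strict inequality whenever $\mu(\vv') < \mu(\vv)$), nothing prevents $F_0$ from sitting almost entirely inside $E''$ with slope strictly above $\mu(\vv)$. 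No amount of rank bookkeeping with $\vv = \vv' + \vv''$ excludes this: the constraint that would exclude it is the nonsplitness of the extension, which your argument for (D2) never invokes. The same issue recurs in your equal-slope endgame, where (E2) only yields $\Delta(F_0) \geq \Delta(\vv')$ and $\Delta(\vv) \geq \Delta(\vv')$, which do not combine to the needed $\Delta(F_0) \geq \Delta(\vv)$.

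The paper sidesteps all of this by testing semistability against a stable destabilizing \emph{quotient} $E \twoheadrightarrow Q$ instead of a subsheaf. For a quotient, $\mu(Q) \leq \mu(E)$ and $r(Q) < r(E)$ hold automatically, so $\ch(Q)$ satisfies (D1)--(D3) with no work, and (E1) immediately forces $\mu(Q) \leq \mu(\vv')$. Stability of $E'$ and $E''$ then kills the compositions $E' \to Q$ and $E'' \to Q$ step by step (using (E2), (E3), and nonsplitness only at the final equality case, where a nonzero map $E' \to Q$ between stable sheaves of the same reduced Hilbert polynomial would be an isomorphism and would split the sequence), so the destabilizing quotient map itself must vanish. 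If you want to salvage a subsheaf-based argument you would need to pass to the saturation/quotient anyway, so I recommend adopting the quotient formulation from the start.
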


\begin{proof}
Suppose that $E \rightarrow Q$ is a destabilizing quotient of $E$, so that $\mu(Q) \leq \mu(E)$ with $r(Q) < r(E)$. After perhaps passing to a further quotient of $Q$, we may assume $Q$ is stable. Slope-closeness (\hyperref[E1]{E1}) implies that $\mu(Q) \leq \mu(E')$. If $\mu(Q) < \mu(E')$, the composition $E' \rightarrow E \rightarrow Q$ is zero by stability. So there is an induced map $E'' \rightarrow Q$. However $\mu(\vv'') \geq \mu(\vv) > \mu(Q)$, hence the induced map is zero. This implies that $E \rightarrow Q$ vanishes. 

We now have $\mu(Q) = \mu(E')$. Discriminant-minimality (\hyperref[E2]{E2}) implies that $\Delta(Q) \geq \Delta(E')$. If $\Delta(Q) > \Delta(E')$, we get a contradiction as in the preceding paragraph. 

Hence $\mu(Q) = \mu(E')$ and $\Delta(Q) = \Delta(E')$. By (\hyperref[E3]{E3}), $E', Q$ are stable. Since $E$ is a non-split extension, $E' \rightarrow Q$ is zero, inducing a map $E'' \rightarrow Q$. Now by (\hyperref[D1]{D1}), (\hyperref[D2]{D2}) this map can be non-zero only if $\mu(\vv'')=\mu(\vv)=\mu(\vv')$ and $\Delta(\vv'')=\Delta(\vv)=\Delta(\vv')$. Hence $E$ is semi-stable.
\end{proof}

\begin{corollary}
Let $\vv \in K(\PP^2)$ be a stable character which admits an extremal decomposition $(\vv', \vv, \vv'')$, and let $\mathscr{E}'/S'$ and $\mathscr{E}''/S''$ be families of semistable sheaves of characters $\vv'$ and $\vv''$ respectively. Then the induced extension parametrization
    \begin{center}
    \begin{tikzcd}
        \PP \mathrm{Ext}^1(\mathscr{E}''_{s''}, \mathscr{E}'_{s'}) \arrow[r, dashed] \arrow[d] & M(\vv) \\
        S' \times S''
    \end{tikzcd}
    \end{center}
defined over the locus where $\mathrm{Ext}^1(\mathscr{E}''_{s''}, \mathscr{E}'_{s'})$ has minimal rank, exists, where $s' \in S'$ and $s'' \in S''$.
\end{corollary}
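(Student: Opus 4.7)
The plan is to combine the semistability result of Proposition \ref{extremal} with standard constructions of relative $\Ext$ sheaves and the universal property of the coarse moduli space $M(\vv)$. Since the stability step is already in hand, the remaining work is structural.

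First, I would use semicontinuity to identify the open locus on which the construction makes sense. Let $p: S' \times S'' \times \PP^2 \to S' \times S''$ be the projection, and pull back $\mathscr{E}'$ and $\mathscr{E}''$ along the appropriate maps to $S' \times S'' \times \PP^2$. The relative sheaf $\mathcal{E}xt^1_{p}(\mathscr{E}'', \mathscr{E}')$ is coherent, and its fiber at $(s', s'')$ is canonically identified, after base change, with $\Ext^1(E''_{s''}, E'_{s'})$. By the semicontinuity theorem, $(s', s'') \mapsto \ext^1(E''_{s''}, E'_{s'})$ is upper semicontinuous and is constant on an open subset $U \subseteq S' \times S''$; over $U$, cohomology and base change ensures the relative $\Ext$ sheaf is locally free. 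Projectivization then yields a projective bundle $\pi: \PP \to U$ whose fiber over $(s', s'')$ is $\PP\Ext^1(E''_{s''}, E'_{s'})$.

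Second, one constructs the tautological extension sheaf on $\PP \times \PP^2$. The tautological line subbundle $\OO_{\PP}(-1) \hookrightarrow \pi^* \mathcal{E}xt^1_{p}(\mathscr{E}'', \mathscr{E}')$ corresponds, via the universal property of relative $\Ext$, to a universal extension
\[
0 \to \pi^*\mathscr{E}' \otimes \rho^*\OO_{\PP}(1) \to \widetilde{\mathscr{E}} \to \pi^*\mathscr{E}'' \to 0
\]
on $\PP \times \PP^2$, where $\rho: \PP \times \PP^2 \to \PP$ denotes the projection. For any closed point $\xi \in \PP$ lying above $(s', s'')$, the restriction $\widetilde{\mathscr{E}}|_{\{\xi\} \times \PP^2}$ is a nontrivial extension of $E''_{s''}$ by $E'_{s'}$, hence by Proposition \ref{extremal} is semistable of Chern character $\vv$. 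The family $\widetilde{\mathscr{E}}$ is flat over $\PP$ because it is built by extensions of flat pullbacks.

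Finally, flatness of $\widetilde{\mathscr{E}}$ over $\PP$ combined with fiberwise semistability produces, via the universal property of the coarse moduli space $M(\vv)$ (see \cite[Theorem 4.16 and Example 8.7]{A}), a classifying morphism $\phi: \PP \to M(\vv)$, which is the desired extension parametrization. The main conceptual obstacle, stability of the middle term, was already settled by Proposition \ref{extremal}, so no further substantive difficulty remains; everything else is a routine package of semicontinuity, relative $\Ext$, and the moduli-theoretic universal property.
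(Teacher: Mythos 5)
Your proposal is correct and follows essentially the same route as the paper: the only substantive input is Proposition \ref{extremal} for fiberwise semistability, and the rest (relative $\Ext$, semicontinuity, the tautological extension over the projectivization, and the classifying map to the coarse moduli space) is the standard package that the paper's two-sentence proof leaves implicit. You have simply spelled out the routine details the authors omit.
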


\begin{proof}
By Proposition \ref{extremal}, for any $s' \in S'$ and $s'' \in S''$, a nonsplit extension
    \[0 \rightarrow \mathscr{E}'_{s'} \rightarrow E \rightarrow \mathscr{E}''_{s''} \rightarrow 0\]
is semistable of Chern character $\vv$. So on any locus in $S' \times S''$ where $\Ext^1(\mathscr{E}''_{s''}, \mathscr{E}'_{s'})$ is of constant rank, there is an induced map $\PP \dashrightarrow M(\vv)$, where $\PP$ is the projective bundle over $S' \times S''$ with fiber $\PP \Ext^1(\mathscr{E}''_{s''}, \mathscr{E}'_{s'})$ over $(s', s'') \in S' \times S''$.
\end{proof}

\begin{remark}
	When the subsheaf is not extremal for the extension sheaf, in general additional conditions are required to prove generic stability, see for instance Lemma \ref{corank0lemma} and Lemma \ref{prioritary}.
\end{remark}

\section{Emptiness \& non-emptiness of Brill-Noether loci} \label{boundssection}
For a stable Chern character $\vv \in K(\PP^2)$ there are only finitely many values of $k \geq 0$ such that $B^k(\vv) \subseteq M(\vv)$ is nonempty. That is, there are only finitely many values of $h^0(E)$ for a stable sheaf $E \in M(\vv)$. We first give a bound on this quantity. Later on in this section we will analyze which such values are achieved.

\begin{theorem} \label{pbound}
If $E$ is a slope-semistable sheaf on $\mathbb{P}^2$ with Chern character $\mathbf{v}$ such that $\ch_1(\vv) \geq 0$, then
    \[h^0(E) \leq \max\{p(\ch_1(\vv)), \ch_0(\vv)\},\]
where $p(x) = p_{\OO_{\PP^2}}(x) = \frac{x^{2}+3x+2}{2}$.
\end{theorem}

In the proof we will consider partial evaluation maps $V \otimes \OO \rightarrow E$ for a subspace $V \subseteq H^0(E)$. We fix $E$ and $V$ for the following lemma.

\begin{lemma} \label{pbound-lemma}
	Let $F \subseteq E$ be the image sheaf of the evaluation map $V \otimes \mathcal{O} \rightarrow E$, and let $0 = F_0 \subseteq F_1 \subseteq \cdots \subseteq F_m = F$ be the Harder-Narasimhan filtration of $F$, with graded pieces $\gr_i = F_i/F_{i-1}$.
	
	If $r(F) < r(E)$, then $c_1(\gr_i) < c_1(E)$ for each $i$.
\end{lemma}

\begin{proof}
	If not, then since $r(\gr_i) \leq r(F) < r(E)$,
    	\[\mu(E) = \frac{c_1(E)}{r(E)} \leq \frac{c_1(\gr_i)}{r(E)} < \frac{c_1(\gr_i)}{r(\gr_i)} = \mu(\gr_i).\]
	(Note that we need $c_1(E) \geq 0$ for $V$ to be nonempty.) The inclusions $\gr_1 = F_1 \hookrightarrow F \hookrightarrow E$ and semistability of $E$ imply
    	\[\mu(\gr_1) \leq \mu(E) < \mu(\gr_i)\]
	contradicting that the slopes of the $\gr_i$ are non-increasing. We conclude that $c_1(\gr_i) < c_1(E)$ for each $i$. 
\end{proof}

We observe for use in the cases below that the function $p(x)/x$ is increasing for integers $x \geq 2$, with $p(1) = p(2)/2$.

\begin{proof}[Proof of Theorem \ref{pbound}]
First, since we have $E \subseteq E^{\vee \vee}$, we have $h^0(E) \leq h^0(E^{\vee \vee})$. Because $E^{\vee \vee}$ is slope-semistable with $r(E^{\vee \vee}) = r(E)$ and $c_1(E^{\vee \vee}) = c_1(E)$, it suffices to prove the theorem when $E$ is locally free.

\textit{Case I.} In this case we assume $r(E) = \ch_0(\vv) \geq p(\ch_1(\vv))$, and proceed by induction on $r(E)$. In this case when $r(E) = 1$ we have $c_1(E) \leq 0$, and when $c_1(E) < 0$ slope-semistability implies $h^0(E) = 0$, as desired. When $c_1(E) = 0$, it follows that $E$ is the ideal sheaf of a finite length subscheme $Z \subseteq \PP^2$. Thus $h^0(E) \leq 1$, and equality holds if and only if $Z$ is empty. 

We first show that any $r(E)$-dimensional subspace $V \subseteq H^0(E)$ spans a full rank subsheaf of $E$. That is, the evaluation map $V \otimes \OO \rightarrow E$ is full rank. Suppose not, so that $r(F) < r(E)$. To show this we first claim that each $\gr_i$ has $r(\gr_i) \geq p(c_1(\gr_i))$, so by induction on the rank we have $h^0(\gr_i) \leq r(\gr_i)$. It follows that
    \[h^0(F) \leq \sum h^0(\gr_i) \leq \sum r(\gr_i) = r(F) < r(E).\]
Since by definition $h^0(F) \geq r(E)$, we conclude from this contradiction that any partial evaluation map of any $r(E)$ independent sections in this case is of full rank.

To check the claim, assume otherwise that $r(\gr_i) < p(c_1(\gr_i))$. It is enough to produce a contradiction for those $\gr_i$ with $c_1(\gr_i) > 0$, since when $c_1(\gr_i) \leq 0$ we have $h^0(\gr_i) \leq r(\gr_i)$. Then
    \[\mu(E) = \frac{c_1(E)}{r(E)} \leq \frac{c_1(E)}{p(c_1(E))} \leq \frac{c_1(\gr_i)}{p(c_1(\gr_i))} < \frac{c_1(\gr_i)}{r(\gr_i)} =\mu(\gr_i),\]
where we have used Lemma \ref{pbound-lemma} and that $x/p(x)$ is non-increasing when $x \geq 1$. The slopes $\mu(\gr_i)$ in the Harder-Narasimhan filtration are decreasing and $E$ is semistable, so we have
    \[\mu(\gr_i) \leq \mu(\gr_1) \leq \mu(E)\]
so this contradiction proves the claim. We conclude that any $r(E)$ sections span a full rank subsheaf of $E$.

Therefore an $r(E)$-dimensional subspace $V \subseteq H^0(E)$ determines a nonzero section of $\OO(c_1(E))$ via the natural map
    \[\wedge^{r(E)} H^0(E) \rightarrow H^0(\wedge^{r(E)} E) = H^0(\OO(c_1(E))),\]
where we have $\wedge^r E = \OO(c_1(E))$ because $E$ is locally free. In this way we obtain a morphism
    \[f: G(r(E), H^0(E)) \rightarrow \PP H^0(\OO(c_1(E))).\]

When $h^0(E) \geq r(E) + 1$, it is easy to see that $f$ is nonconsant. Indeed, any $r(E)$-dimensional subspace $V \subseteq H^0(E)$ sits in a short exact sequence
    \[0 \rightarrow V \otimes \OO \rightarrow E \rightarrow Q \rightarrow 0\]
because the evaluation map is of full rank. Choose $s \in H^0(E) \smallsetminus V$, and consider a general point $p \in \Supp(Q)$. Then $0 \neq s_p \in H^0(Q|_p)$ does not lie in $V|_p \subseteq E|_p$. Choosing a generating set $s_1, ..., s_{r(E)}$ for the span $\langle s_p, V|_p \rangle \subseteq E_p$; then the $s_i$ span a rank $r(E)$ subspace $V' \subseteq H^0(E)$ such that the induced section $\wedge^{r(E)}V' \in H^0(\wedge^{r(E)} E)$ is independent from $\wedge^{r(E)} V$. For instance, the multiplicity of $\wedge^{r(E)} V'$ at $p$ is lower than that of $\wedge^{r(E)}V$.

However if $h^0(E) \geq r(E)+1$, we have 
    \[\dim G(r(E),H^0(E)) \geq r(E) \geq p(c_1(E))\]
since $E$ has $r(E) \geq p(c_1(E))$, and $\dim \PP H^0(\OO(c_1(E))) = p(c_1(E)) - 1$. Since the Picard rank of a Grassmannian is 1, there is no non-constant map from a Grassmannian to a lower-dimensional projective variety. We conclude that $h^0(E) \leq r(E)$, as desired. 

\textit{Case II.} We are now in the situation that $r(E) < p(c_1(E)) = \frac{c_1(E)^2 + 3c_1(E) + 2}{2}$, and we want to prove $h^0(E) \leq p(c_1(E))$. We will use induction on $c_1(E)$. If $c_1(E) = 0$, then $p(c_1(E))=1$ and there is nothing to prove.  

We first show that any $p(c_1(E))$-dimensional subspace $V \subseteq H^0(E)$ spans a full rank subsheaf of $E$. We proceed by contradiction, so we let $F$ be the image of the evaluation map $V \otimes \mathcal{O} \rightarrow E$ and assume $r(F) < r(E)$. Since $p(x)/x$ is non-decreasing, when $c_1(\gr_i) \geq 1$ we have
    \[p(c_1(\gr_i)) = c_1(\gr_i) \frac{p(c_1(\gr_i))}{c_1(\gr_i)} \leq c_1(\gr_i) \frac{p(c_1(E))}{c_1(E)} \leq p(c_1(E)) \frac{r(\gr_i)}{r(E)}\]
by Lemma \ref{pbound-lemma}. The $\gr_i$ may fall into either Case I or Case II. When $\gr_i$ is in Case II, we obtain
    \[h^0(\gr_i) \leq p(c_1(\gr_i)) \leq p(c_1(E)) \frac{r(\gr_i)}{r(E)}\]
by induction on $c_1$. When $c_1(\gr_i) < 1$, we have
    \[h^0(\gr_i) \leq r(\gr_i) = r(E) \frac{r(\gr_i)}{r(E)} \leq p(c_1(E)) \frac{r(\gr_i)}{r(E)}\]
by the assumption that $E$ is in Case II. For those $\gr_i$ in Case I with $r(\gr_i) \geq p(c_1(\gr_i))$, we also have
    \[h^0(\gr_i) \leq r(\gr_i) = r(E) \frac{r(\gr_i)}{r(E)} < p(c_1(E)) \frac{r(\gr_i)}{r(E)}.\]
We now have
    \begin{equation} \label{eq:F}
        h^0(F) \leq \sum h^0(\gr_i) \leq \sum p(c_1(E)) \frac{r(\gr_i)}{r(E)} = p(c_1(E)) \frac{r(F)}{r(E)} < p(c_1(E)). 
    \end{equation}
By definition we have $h^0(F) \geq p(c_1(E))$, and this contradiction proves that any $p(c_1(E))$-dimensional subspace $V$ spans a full rank subsheaf of $E$.

We can additionally assume that the dependency locus of $V$ is zero-dimensional. If not, consider the image $F \subseteq E$ of the evaluation map $V \otimes \OO \rightarrow E$ and its Harder-Narasimhan filtration. Then $c_1(F) < c_1(E)$. By (\ref{eq:F}), we know that $h^0(F) \leq p(c_1(E))$ (only the final inequality in (\ref{eq:F}) is no longer strict), and equality holds only if for every $i$, $h^0(\gr_i)= p(c_1(E))\frac{r(\gr_i)}{r(E)}$ for every graded piece. This is true only if $c_1(\gr_i)>0$; $r(\gr_i)< p(c_1(\gr_i))$; $c_1(\gr_i)=1$; and $c_1(E)=2$. In this case, $c_1(F)<c_1(E)=2$, $c_1(F)\leq 1$. If $F$ is unstable, then there is at least one Harder-Narasimhan factor $\gr_i$ whose degree is non-positive, which is not the case. If $F$ is stable, it satisfies $r(F)<p(c_1(F))$, then by induction, $h^0(F)\leq p(c_1(F))<p(c_1(E))$. In any case, we have $h^0(F)<p(c_1(E))$, a contradiction.

We now return to the bound on $h^0(E)$. We first prove $h^0(E) \leq p(c_1(E))$ when $0 \leq \mu(E) < 1$. We have just shown that any $p(c_1(E))$ sections of $E$ span a full rank subsheaf, and their dependency locus is zero-dimensional. Let $L \subseteq \PP^2$ be a general line. Consider 
    \[0 \rightarrow E(-1) \rightarrow E \rightarrow E|_L \rightarrow 0\]
and write $E|_L = \bigoplus_{i=1}^{r(E)} \OO_L(d_i)$. Since $\mu(E) < 1$, there is at least one $d_i \leq 0$, say $d_1$. Taking $H^0$ we obtain $\pi: H^0(E) \rightarrow \bigoplus_{i=1}^{r(E)} H^0(\OO_L(d_i))$. Let $W \subseteq \bigoplus_{i=1}^{r(E)} H^0(\OO_L(d_i))$ be the image of $\pi$. If $W \subseteq \bigoplus_{i=2}^{r(E)} H^0(\OO_L(d_i))$, then sections of $E$ are dependent along $L$, hence $h^0(E) < p(c_1)$. If $W \not\subseteq \bigoplus_{i=2}^{r(E)} H^0(\OO_L(d_i))$, then $d_1 = 0$, and since $\pi^{-1}(W \cap \bigoplus_{i=2}^{r(E)} H^0(\OO_L(d_i)))$ has codimension 1, there are $h^0(E)-1$ sections dependent along $L$, hence $h^0(E)-1 < p(c_1)$. That is, $h^0(E) \leq p(c_1(E))$, as desired.

Now if $n \leq \mu < n+1 $, we induct on $n$. We have just shown the statement is true for $n=0$; now assume $n \geq 1$. Again we use that any $p(c_1(E))$ sections span a full rank subsheaf, and the dependency locus 
is supported in dimension zero. Consider again the restriction to a line:
    \[0 \rightarrow E(-1) \rightarrow E \rightarrow E|_L \rightarrow 0.\]
Taking $ H^0$ we get $h^0(E) \leq h^0(E(-1))+ h^0(E|_L)$. By induction, $h^0(E(-1)) \leq p(c_1(E) - r(E))$. Consider $E|_L = \bigoplus_{i=1}^{r(E)} \OO_L(d_i)$. If some $d_i < 0$, say $d_1$, then the span of sections in $W$ is contained in $\bigoplus_{i \geq 2} \OO_L(d_i)$, and it follows that $h^0(E) < p(c_1(E))$. Hence we assume $d_i \geq 0$ for all $i$. Then $h^0(E|_L) = \sum_{i=1}^{r(E)} h^0(\OO_L(d_i+1)) = c_1(E) + r(E)$. Now since $n \geq 1$, $c_1(E) \geq r(E) \geq 1$ and we obtain
    \begin{align*}
        r(E)(2c_1(E) - r(E) + 1) - 2c_1(E) &= (r(E)-1)2c_1(E) - r(E)^2 + r(E) \\
        &\geq (r(E)-1)2r(E) - r(E)^2 + r(E) \\
        &= r(E)(r(E) - 1) \geq 0
    \end{align*}
hence $r(E)(2c_1(E) - r(E) + 1) \geq 2c_1(E)$, and 
    \begin{align*}
        p(c_1(E)) - p(c_1(E)-r(E)) &= \frac{2c_1(E)r(E) - r(E)^2 + 3r(E)}{2} \\
        &= r(E) + \frac{r(E)(2c_1(E) - r(E) + 1)}{2} \\
        &\geq r(E) + c_1(E).
    \end{align*}
In particular we have 
    \[h^0(E) \leq h^0(E(-1)) + h^0(E|_L) \leq p(c_1(E) - r(E))+(r(E) + c_1(E)) \leq p(c_1(E)).\] 
This finishes the proof.
\end{proof}

We will later need the following corollary, which is an immediate consequence of Theorem \ref{pbound}.

\begin{corollary} \label{degone} 
If $E$ is a semistable sheaf on $\PP^2$ with $c_1(E) = 1$ and $r(E) \geq 3$, then $h^0(E) \leq r(E)$.
\end{corollary}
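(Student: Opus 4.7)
The plan is to apply Theorem \ref{pbound} directly. The theorem gives
    \[h^0(E) \leq \max\{p(c_1(E)), r(E)\}.\]
Substituting $c_1(E) = 1$ into the formula $p(x) = \frac{x^2+3x+2}{2}$ yields $p(1) = 3$. Thus the bound becomes $h^0(E) \leq \max\{3, r(E)\}$, and the hypothesis $r(E) \geq 3$ collapses the maximum to $r(E)$.

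There is no real obstacle here; the content of the corollary is entirely contained in Theorem \ref{pbound}, and the only step is the numerical observation that $p(1) = 3$ is absorbed by the rank whenever $r(E) \geq 3$. I would present the proof in one or two sentences, essentially as above, with no additional argument required.
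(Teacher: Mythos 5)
Your proposal is correct and is exactly what the paper intends: the corollary is stated as an immediate consequence of Theorem \ref{pbound}, and your computation $p(1)=3$ together with $\max\{3, r(E)\} = r(E)$ for $r(E)\geq 3$ is the whole argument.
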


\begin{remark} \label{rmkdegone}
We can describe the behavior of $E$ with $c_1(E) = 1$ also when the rank is small. If the rank is 1, then $E$ is a twist of an ideal sheaf $E \simeq I_Z(1)$. Then $h^0(E) \neq 0$ if and only if $Z$ is colinear, in which case
    \begin{itemize}
        \item $h^0(E) = 1$ if $\ell(Z) \geq 2$,
        \item $h^0(E) = 2$ if $\ell(Z) = 1$, and
        \item $h^0(E) = 3$ if $\ell(Z) = 0$. 
    \end{itemize}

If the rank is 2, suppose $h^0(E) \geq 3$ and consider the partial evaluation map $V \otimes \OO \rightarrow E$ for $V \subseteq H^0(E)$ a rank 3 subspace. Let $F$ be the image of $V \otimes \mathcal{O}$ in $E$. If the rank of $F$ is one, then it falls into one of the above cases. Since $r(V) = 3$, we have $h^0(F) \geq 3$, so as above $F = \mathcal{O}$, but in this case the kernel $K = \ker(V \otimes \mathcal{O} \rightarrow F)$ has $h^0(K) \neq 0$, which contradicts the definition of $F$. It follows that $r(F) = 2$, and in fact is semistable of slope $1/2$.

This produces a short exact sequence
    \[0 \rightarrow K \rightarrow V \otimes \OO \rightarrow F \rightarrow 0\]
with $r(K) = 1$, $c_1(K) = -1$. It follows that $K \simeq I_Z(-1)$ for some codimension two subscheme $Z \subseteq \PP^2$. In particular, $\ch_2(K) = \frac{1}{2} - \ell(Z)$. Thus
    \[\Delta(F) = \frac1{2}\left( \frac1{2} \right)^2 - \frac{\ell(Z) - 1/2}{2} = \frac{3}{8} - \frac{\ell(Z)}{2}.\]
Semistability additionally implies that $\Delta(F) = \frac{3}{8} + \frac{k}{2}$ for some $k \geq 0$ (see Section \ref{jumpbyr}), from which we find $k = -\ell(Z) \leq 0$, so $k = 0$ and $F \simeq E = T_{\PP^2}(-1)$. In all other cases, we again have $h^0(E) \leq r(E)$.
\end{remark}


\subsection{The Brill-Noether loci $B^r(\vv)$} In this section we show that $B^k(\vv) \neq \emptyset$ whenever $k \leq \ch_0(\vv)$. We can say a great deal about the loci of sheaves $E$ with $h^0(E) \geq r = r(E)$, i.e., the Brill-Noether loci $B^r(\vv)$. We will need the following statements to study the Brill-Noether loci when $\ch_0(\vv) \geq p(\ch_1(\vv))$.

\begin{lemma} \label{corank0lemma}
Let $\vv \in K(\PP^2)$ be a stable Chern character and write $r = \ch_0(\vv)$ and $c_1 = \ch_1(\vv)$. Assume that $r \geq p(\ch_1(\vv))$. 
\begin{enumerate}
    \item \label{one} For every $E \in B^r(\vv)$, evaluation map on global sections $\ev_E$ sits in an exact sequence
        \[0 \rightarrow \OO^r \rightarrow E \rightarrow Q_E \rightarrow 0,\]
    where $Q_E$ is a pure sheaf supported on a 1-dimensional subscheme of $\PP^2$.
    \item \label{two} Let $E \in B^r(\vv)$ be a general sheaf. If $Q_E$ is supported on a smooth curve $C$ of degree $c_1(E)$, then $Q_E$ is the pushforward of a line bundle on $C$. Writing $Q_E \simeq \OO_C(D)$, we have 
        \[\deg_C D \leq \frac{c_1^2+3 c_1}{2} - r \leq -1.\]
    \item \label{three} Conversely, let $\OO_C(D)$ be the pushforward of a line bundle on a smooth curve $C \subseteq \PP^2$, where $\deg_C D \leq \frac{1}{2}(\deg(C)^2+3\deg(C))-r \leq -1 $. Then a general extension sheaf
        \[0 \rightarrow \OO^r \rightarrow E \rightarrow \OO_C(D) \rightarrow 0\] 
    is stable.
\end{enumerate}
\end{lemma}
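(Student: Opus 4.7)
The plan for Parts (1) and (2) is to lean on Theorem \ref{pbound} and Grothendieck duality respectively, with both following directly. For (1), the hypothesis $\ch_0(\vv) \geq p(\ch_1(\vv))$ puts us in Case I of Theorem \ref{pbound}, so $h^0(E) \leq r$ and in fact equals $r$ for generic $E \in B^r(\vv)$; the argument there shows that $r$ independent sections span a full rank subsheaf, so $\ev_E : \OO^r \to E$ has full rank, and since $\OO^r$ is torsion-free this map is injective. Setting $Q_E = \coker(\ev_E)$ and taking the long exact sequence in cohomology of $0 \to \OO^r \to E \to Q_E \to 0$, the identification $H^0(\ev_E) : H^0(\OO^r) \xrightarrow{\sim} H^0(E)$ combined with $H^1(\OO^r) = 0$ yields $H^0(Q_E) = 0$. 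Since any zero-dimensional torsion subsheaf of $Q_E$ would inject into $H^0(Q_E)$, purity of $Q_E$ follows.

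For (2), a pure one-dimensional sheaf on the smooth curve $C$ is the pushforward of a torsion-free, hence locally free, sheaf $\mathcal{L}$ on $C$, and matching $c_1$-classes forces $\mathrm{rk}(\mathcal{L}) = 1$, so $\mathcal{L} = \OO_C(D)$. Applying $\mathcal{H}om(-, \OO_{\PP^2})$ to the defining sequence and using Grothendieck duality for $\iota : C \hookrightarrow \PP^2$ with $\det N_{C/\PP^2} = \OO_C(c_1 H)$, so that $\mathcal{E}xt^1(\iota_* \OO_C(D), \OO_{\PP^2}) = \iota_* \OO_C(c_1 H - D)$, we obtain
\[0 \to E^\vee \to \OO^r \to \iota_* \OO_C(c_1 H - D) \to \mathcal{E}xt^1(E, \OO_{\PP^2}) \to 0.\]
Stability of $E$ with $\mu(E) > 0$ gives $h^0(E^\vee) = 0$, so $r = h^0(\OO^r) \leq h^0(\OO_C(c_1 H - D))$. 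Setting $g = \binom{c_1 - 1}{2}$, the bound $r \geq p(c_1)$ implies $r \geq g + 1$, so either Riemann-Roch (non-special case) or Clifford's theorem (special case) forces $\deg(c_1 H - D) \geq r + g - 1$. Rearranging with $c_1^2 - g + 1 = \tfrac{1}{2}(c_1^2 + 3c_1)$ gives $\deg D \leq \tfrac{1}{2}(c_1^2 + 3c_1) - r$, and the final inequality $\leq -1$ is immediate from $r \geq p(c_1) = \tfrac{1}{2}(c_1^2 + 3c_1 + 2)$.

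Part (3) is the converse and is the substantive step. Given $\OO_C(D)$ on a smooth plane curve $C$ of degree $c_1$ with the stated degree bound, first check that the prospective Chern character $\vv$ of $E$, with $\ch_2(\vv) = \deg D - c_1^2/2$, satisfies $\Delta(\vv) \geq \delta(\mu(\vv))$; at the extreme value $\deg D = \tfrac{1}{2}(c_1^2 + 3c_1) - r$ one computes that $\vv$ lies exactly on the $\OO_{\PP^2}^\perp$-branch of the Dr\'ezet-Le Potier curve, with strict slack for more negative $\deg D$. For generic stability, suppose $F \subsetneq E$ is destabilizing with $r(F) < r$ and $\mu(F) \geq \mu(E)$, and set $K = F \cap \OO^r$, $Q = \text{image}(F \to \iota_* \OO_C(D))$. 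If $Q = 0$ then $F \subseteq \OO^r$ has slope $\leq 0 < \mu(E)$, a contradiction. Otherwise $Q = \iota_* \OO_C(D - D')$ for an effective $D' \geq 0$ on $C$, and the slope inequality combined with $c_1(K) \leq 0$ forces the numerical triple $(r(K), c_1(K), \deg D')$ into a finite set. For each admissible triple, the locus of extensions admitting such an $F$ is a proper linear subspace of $\Ext^1(\iota_* \OO_C(D), \OO^r)$ cut out by vanishing of certain components of the extension class under the natural restriction maps; the bound $\deg D \leq -1$ makes this ambient $\Ext^1$ large enough (via Riemann-Roch on $C$) that the finite union of destabilizing loci remains proper, and so the generic extension is stable.

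The principal technical obstacle is in Part (3). Since $\OO^r$ is only polystable (not stable), the extremal framework of Proposition \ref{extremal} does not apply, and one must instead perform a direct case analysis over potential destabilizing subsheaves. The bookkeeping balances Quot schemes of subsheaves of $\OO^r$ of prescribed numerical type against symmetric products of $C$ parametrizing effective divisors $D'$, compared to the dimension of $\Ext^1(\iota_* \OO_C(D), \OO^r)$. Parts (1) and (2), by contrast, are essentially formal once Theorem \ref{pbound} and Grothendieck duality on $\PP^2$ are in hand.
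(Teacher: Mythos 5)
Your parts (1) and (2) are correct. Part (1) is essentially the paper's argument, slightly streamlined: you deduce purity from $H^0(Q_E)=0$ via the long exact sequence, whereas the paper twists by $\OO(-m)$ first; both work. Part (2) takes a genuinely different route: you dualize the defining sequence, use Grothendieck duality to identify $\mathcal{E}xt^1(\iota_*\OO_C(D),\OO)$ with $\iota_*\OO_C(c_1H-D)$, and then run Riemann--Roch/Clifford on $C$ against the inequality $r\le h^0(\OO_C(c_1H-D))$. The paper instead observes that $r\ge p(c_1)$ forces $\mu(E)\le 1/3$, so $\vv$ sits above the $^{\perp}\OO$-branch of the Dr\'ezet--Le Potier curve, whence $\chi(E,\OO)\le 0$ and the degree bound drops out of $\chi(\OO_C(D),\OO)=\chi(E,\OO)-\chi(\OO^r,\OO)\le -r$. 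Your version trades the classification input for classical curve theory; both are sound, and yours has the minor virtue of not invoking the DLP curve.

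Part (3) has a genuine gap. You reduce to subsheaves $F\subseteq E$ with $K=F\cap\OO^r$ and image $\iota_*\OO_C(D-D')$, and then assert that for each numerical type the locus of extension classes admitting such an $F$ is a proper linear subspace, and that the union over all types stays proper. Neither assertion is carried out, and the second is the entire content of the statement: the conditions are linear only after fixing the actual pair $(K,D')$, not its numerical type, so what you must control is a union of linear subspaces parametrized by positive-dimensional families (Quot schemes of subsheaves of $\OO^r$ together with symmetric products of $C$ for $D'$, which is not even a finite list of strata). Worse, for $K$ a trivial subbundle and $D'=0$ the resulting $F$ has $\mu(F)=c_1/r(K)>\mu(E)$, so every such lift genuinely destabilizes; ruling these out is exactly the dimension count you defer, and for other strata one must also compare the varying codimension of the lifting condition against the dimension of the stratum. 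The paper avoids all of this bookkeeping by a structural reduction you are missing: take a semistable torsion-free destabilizing \emph{quotient} $F$ of $E$, note that $\mu(F)\ge 0$ (it receives a nonzero map from $\OO^r$) and $c_1(F)<c_1(E)$, check that $F$ again satisfies $r(F)\ge p(c_1(F))$ so that part (\ref{one}) applies to $F$, and observe that the induced surjection $\OO_C(D)\to Q_F$ would force $\Supp Q_F$ to be a curve of degree $<\deg C$ contained in the irreducible curve $C$. Hence $Q_F=0$ and $F\cong\OO^{r(F)}$, so the only possible failure of stability is a partial splitting of the extension; the locus of partially split classes is then a finite union of images of $G(q,r)\times\Ext^1(\OO_C(D),\OO^{r-q})$, whose dimensions are easily seen to be less than $\ext^1(\OO_C(D),\OO^r)=re$ using $e\ge r$. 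To repair your write-up you either need to carry out the full stratified dimension count you sketch, or adopt this reduction to trivial quotients.
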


\begin{proof}
(\ref{one}) First, the proof of Theorem \ref{pbound}, Case I, implies that the evaluation map has full rank. It follows that $Q_E$ is a torsion sheaf. Consider a negative twist
    \[0 \rightarrow \OO(-m)^r \rightarrow E(-m) \rightarrow Q_E(-m) \rightarrow 0\] 
of the above exact sequence, where $m \gg 0$. If $Q_E$ is has torsion $T \subseteq Q_E$ supported in codimension two, then $T(-m) \simeq T$, so in particular $T(-m)$ and $Q_E(-m)$ have global sections. The associated cohomology exact sequence 
    \[H^0(E(-m)) \rightarrow H^0(Q_E(-m)) \rightarrow H^1(\OO(-m)^r)\]
has outer terms which vanish and an inner term which does not. This contradiction proves that $Q_E$ is supported in pure codimension one.
    
(\ref{two}) There is a filtration $ 0 = F_k Q_E \subseteq F_{k-1} Q_E \subseteq \cdots \subseteq F_1 Q_E \subseteq Q_E$, where each graded piece is the pushforward of a sheaf on $C$ \cite{Drezet-mult}. If $F_i Q_E \neq 0$ then its support is $C$, otherwise it would be a torsion subsheaf of $Q_E$ supported on zero dimensional scheme, contradicting the purity of $Q_E$ from (\ref{one}). Now if $F_1 Q_E \neq 0$, then $c_1(F_1 Q_E) \geq c_1$, but $Q_E/F_1 Q_E$ has support $C$, whose first Chern class is $ \geq c_1$, hence $c_1(Q_E) \geq 2c_1$, a contradiction.

For the degree, note that since $r \geq p(c_1)$, we have
    \[\mu(E) \leq \frac{c_1}{p(c_1)} \leq 1/3\]
since $x/p(x) \leq 1/3$ for integers $x$. Thus $E$ lies directly above the $^{\perp}\OO$-branch of the Dr\'ezet-Le Potier curve, which gives $\chi(E, \OO) \leq 0$. Thus
    \[\chi (\OO_C(D), \OO_{\PP^2}) = \chi(E, \OO_{\PP^2} )-\chi(\OO_{\PP^2}^r, \OO_{\PP^2}) \leq -r.\]

On the other hand,
    \[\chi(\OO_C(D),\OO_{\PP^2}) = \chi(\OO_C, \OO_C(D-3H)) = \deg_C(D-3H) + 1 - g(C)\]
so
    \[\deg_C D-3c_1 + 1 - \frac{c_1^2 - 3c_1 + 2}{2} \leq -r \]
i.e., $\deg_C D \leq \frac{1}{2}(c_1^2+3c_1)-r$. The last inequality follows from the assumption $r \geq p(c_1)$.
    
(\ref{three}) Let $F$ be a possible destabilizing semistable quotient sheaf of $E$. Since $F$ is torsion-free, there is no nonzero map
from $\OO_C(D) \rightarrow F$, hence the composition $\OO^r \rightarrow E \rightarrow F$ is nonzero. Since $\mathcal{O}^r \rightarrow E$ is of full rank, the composition $\OO^r \rightarrow E \rightarrow F$ is of full rank as well. The sheaves $\OO^r$ and $F$ are both semistable, which gives $ \mu(F) \geq 0$. On the other hand, since $F$ has smaller rank than $E$, $c_1(F)< c_1(E)$, hence the assumption on $\deg(D)$ implies $F$ also has $r(F) \geq p(c_1(F))$. Therefore $ h^0(F) \leq r(F)$, hence $\OO^r \rightarrow F$ gives only $r(F)$ sections, i.e., it factors through $\OO^{r(F)} \rightarrow F$. By part (\ref{one}), we have a short exact sequence
    \[0 \rightarrow \OO^{r(F)} \rightarrow F \rightarrow Q_F \rightarrow 0\] 
where $Q_F$ is supported on a curve $ C'$, and $\deg C' \leq c_1(F) < c_1 = \deg C$. Now we have a diagram
    \begin{center}
   \begin{tikzcd}
    0 \arrow[r] &\OO^r \arrow[r] \arrow[d] & E \arrow[r] \arrow[d] & \OO_C(D)  \arrow[r] \arrow[d] & 0\\ 
    0 \arrow[r] & \OO^{r(F)} \arrow[r] & F \arrow[r] & Q_F \arrow[r] & 0 
   \end{tikzcd}
   \end{center}
where the vertical maps are surjective. The map $\OO_C(D) \rightarrow Q_F$ induces an inclusion $\text{Supp } Q_F \subseteq C$. $\text{Supp } Q_F$ has strictly smaller degree than $C$. Since $C$ is irreducible, this is impossible. We conclude $Q_F = 0$, so $F$ is a slope 0 semistable sheaf with $r(F)$ sections. Hence $F = \OO^{r(F)}$, and the exact sequence defining $E$ is partially split. 

We count dimensions to show such extensions form a closed locus in the associated extension space. If $\deg D \leq \frac{1}{2}(\deg(C)^2+3\deg(C))-r$, then $\ext^1(\OO_C(D), \OO_{\PP^2}) \geq r$. If $E = \OO_{\PP^2}^q \oplus E'$ where $ E' \in \Ext^1(\OO_C(D), \OO_{\PP^2}^{r-q})$, then the dimension of the locus of such $E \in \Ext^1(\OO_C(D), \OO_{\PP^2}^r)$ is no larger than 
    \[\dim G(q,r) + \ext^1(\OO_C(D), \OO^{r-q}) = q(r-q) + (r-q)e,\] 
and $\ext^1(\OO_C(D), \OO_{\PP^2}^r) = re$. Now $q(r-q)+(r-q)e -re =(r-e-q)q$, so we see that when $e \geq r$ and $q > 0$, 
    \[\dim G(q,r)+ \ext^1(\OO_C(D), \OO_{\PP^2}^{r-q}) < \ext^1(\OO_C(D), \OO_{\PP^2}^r).\] 
The lemma follows.
\end{proof}

\begin{theorem} \label{depth}
Suppose that $\vv \in K(\PP^2)$ is stable and $\ch_0(\vv) \geq p(\ch_1(\vv))$, $\ch_1(\vv)>0$. Then $B^r(\vv)$ is nonempty and contains a component of the expected dimension.
\end{theorem}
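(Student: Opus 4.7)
The plan is to apply Lemma \ref{corank0lemma}(\ref{three}) to produce stable sheaves of character $\vv$ via extensions
    \[0 \rightarrow \OO_{\PP^2}^r \rightarrow E \rightarrow \OO_C(D) \rightarrow 0,\]
and then verify that the resulting family gives an irreducible component of $B^r(\vv)$ of the expected dimension.

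First, I would fix the numerical data: matching Chern characters forces $d := \deg_C D = \ch_2(\vv) + c_1^2/2$, where $c_1 := \ch_1(\vv) \geq 1$. The hypothesis $r \geq p(c_1)$ places $\vv$ above the $^{\perp}\OO_{\PP^2}$-branch of the Dr\'ezet-Le Potier curve (as already observed in the proof of Lemma \ref{corank0lemma}(\ref{two})), forcing $\chi(\vv, \OO_{\PP^2}) \leq 0$. Via Hirzebruch-Riemann-Roch this is equivalent to $d \leq (c_1^2 + 3c_1)/2 - r$, precisely the hypothesis of Lemma \ref{corank0lemma}(\ref{three}). A smooth plane curve $C$ of degree $c_1$ exists, as does any $L \in \Pic^d(C)$. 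Since $d \leq -1 < 0$, $h^0(\OO_C(L)) = 0$, so the long exact sequence yields $h^0(E) \geq r$ for every extension $E$; Lemma \ref{corank0lemma}(\ref{three}) then produces a stable such $E$, proving $B^r(\vv) \neq \emptyset$.

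For the dimension count, let $\mathscr{H}$ be the irreducible parameter space of pairs $(C, L)$ with $C \subset \PP^2$ a smooth degree-$c_1$ curve and $L \in \Pic^d(C)$; then $\dim \mathscr{H} = (p(c_1) - 1) + g$ with $g = (c_1-1)(c_1-2)/2$. Over $\mathscr{H}$, the projective bundle $\PP\Ext^1(\OO_C(L), \OO_{\PP^2}^r)$ has fiber dimension $r \cdot \ext^1(\OO_C(L), \OO_{\PP^2}) - 1$; Serre duality on $\PP^2$ together with the vanishing $h^0(\OO_C(L - 3H)) = 0$ (from $d$ very negative) gives $\ext^1(\OO_C(L), \OO_{\PP^2}) = p(c_1) - 1 - d$. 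The rational classifying map to $M(\vv)$ lands inside $B^r(\vv)$; by Lemma \ref{corank0lemma}(\ref{one}), the quotient $E / \mathrm{image}(\ev_E)$ recovers $\OO_C(L)$ from $E$, so the generic fiber of the classifying map has dimension $r^2 - 1$ (the choice of basis for $H^0(E) \cong \C^r$, modulo scalars). The image dimension is then $(p(c_1) - 1 + g) + (r(p(c_1) - 1 - d) - 1) - (r^2 - 1)$, which after substituting $d = \ch_2(\vv) + c_1^2/2$ and using the identity $c_1^2 - p(c_1) - g = -2$ matches exactly $\dim M(\vv) - r(r - \chi(\vv)) = \expdim B^r(\vv)$.

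Finally, to upgrade from ``subvariety of expected dimension'' to ``component of expected dimension'', I would use semicontinuity: for a general $E$ in the image $Y$, $Q_E = E/\mathrm{image}(\ev_E)$ is supported on a smooth plane curve, and this persists for $E' \in B^r(\vv)$ near $E$; Lemma \ref{corank0lemma}(\ref{two}) then gives $Q_{E'} \cong \OO_{C'}(L')$ with $C'$ smooth, so $E'$ again arises from the construction. Hence $Y$ is open in its containing irreducible component of $B^r(\vv)$, which must therefore have dimension $\dim Y = \expdim B^r(\vv)$. The main obstacle is the dimension calculation itself --- several combinatorial pieces (parameter space of $(C, L)$, fiber of $\PP\Ext^1$, $PGL_r$-orbit) must cancel precisely to hit the expected dimension, with the key identity being $c_1^2 - p(c_1) - g = -2$. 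The rest of the argument is a matter of carefully applying the previous lemmas.
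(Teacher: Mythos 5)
Your proposal is correct and takes essentially the same route as the paper: both use Lemma \ref{corank0lemma} (\ref{two})--(\ref{three}) to identify an open dense subset of $B^r(\vv)$ with the quotient of a projective bundle of extension spaces $\PP\Ext^1(\OO_C(D),\OO^r)$ over $\Pic^d_{\mathcal{C}/U}$ by the $(r^2-1)$-dimensional reframing of $\OO^r$, and then match dimensions. Your bookkeeping via $d=\ch_2(\vv)+c_1^2/2$ and $\ext^1(\OO_C(D),\OO)=p(c_1)-1-d$ is equivalent to the paper's count in terms of $k$ (indeed $p(c_1)-1-d=k+r$), so the computations agree.
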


\begin{proof}
By Lemma \ref{corank0lemma} (\ref{one}), we get a morphism $\psi: B^r(\vv) \rightarrow \PP H^0(\OO_{\PP^2}(\ch_1(\vv))) = \PP V$, by sending $E \in M(\vv)$ with $r$ global sections $s_1, ..., s_r \in H^0(E)$ to $s_1 \wedge \cdots \wedge s_r \in \PP V$, and by Lemma \ref{corank0lemma} (\ref{three}), $\psi$ is surjective. Denoting by $U = \PP V \smallsetminus \Gamma$ the locus of smooth curves, we see that $\psi$ factors through the relative Picard scheme $\Pic^d_{\mathcal{C}/U}$ over the universal curve $\mathcal{C} \rightarrow U$, where $d = \deg(D)$. The induced map $\phi: B^r(\vv) \rightarrow \Pic^d_{\mathcal{C}/U}$ is surjective, and since $E$ uniquely determines $Q_E = \OO_C(D)$ its fibers are quotients of open subsets of the extension spaces $\Ext^1(\OO_C(D), \OO_{\PP^2}^r)$. It follows in particular that $\psi^{-1}(U) \subseteq B^r(\vv)$ is irreducible and open, so its closure forms a nonempty irreducible component. 

We now compute its dimension. Write $r = \ch_0(\vv)$, $c_1 = \ch_1(\vv)$, $\mu = c_1/r$, and $\Delta(\vv) = \Delta_0 + k/r$ where $\Delta_0$ is the minimal discriminant of a stable sheaf with $\mu = \mu(\vv)$ and $r = \ch_0(\vv)$. Note in particular that $\vv$ lies directly above the $^{\perp}\OO$-branch of the Dr\'ezet-Le Potier curve, which is given by
    \[\Delta = p(-\mu) = \frac{1}{2}\mu^2 - \frac{3}{2}\mu + 1.\]
In particular, the characters $(\mu, \delta(\mu))$ are integral, so $\delta(\mu) = \Delta_0$ for these slopes. Then $M(\vv)$ is irreducible of dimension 
    \[\dim M(\vv) = r^2(2(\Delta_0 + k/r)-1)+1 = 1 + c_1^2 - 3c_1 r + r^2 + 2kr.\]
The expected codimension of the Brill-Noether locus $B^m(\vv)$ is $m(m-\chi(\vv))$. Now 
    \[\chi(\vv)= r \left(p\left(\mu\right) - \Delta_0 - k/r \right)= 3c_1 - k,\]
so the expected codimension of $B^r(\vv)$ is $r(r-3c_1+k)$, and its expected dimension is
    \[\expdim B^r(\vv) = (1 + c_1^2 -3c_1r + r^2 + 2kr) - r(r - 3c_1 + k) = 1 + c_1^2 + kr.\]
The dimension of the locus of interest is equal to $\dim \phi^{-1} \Pic^d(\mathcal{C}/U)$. The general $E \in \Ext^1(\OO_C(D), \OO^r)$ admits precisely one map $\OO^r \rightarrow E$, and automorphisms of $\OO^r$ determine distinct extension classes but isomorphic extension sheaves. Thus
    \[\dim \phi^{-1} \Pic^d_{\mathcal{C}/U} = \dim U + g(C) + \ext^1(\OO_C(D), \OO^r) - r^2 + 1.\]

We have $\ext^2(\OO_C(D), \OO)= h^0(\OO_C(D-3H))=0$,
and clearly $\hom(\OO_C(D), \OO) = 0$, so $\ext^1(\OO_C(D), \OO)= -\chi(\OO_C(D), \OO)$.
From the exact sequence 
    \[0 \rightarrow \OO^r \rightarrow E \rightarrow \OO_C(D) \rightarrow 0\]
we obtain 
    \[\chi(\OO_C(D), \OO )= \chi(E, \OO )-\chi(\OO^r, \OO)=-k-r.\]
Hence 
    \[\dim \phi^{-1}(\OO_C(D))= r(k+r)-r^2 = kr.\]
As $\dim U= \dim \PP H^0(\OO_{\PP^2}(c_1))= \frac{1}{2}(c_1^2+3c_1)$, we have
    \[\dim \Pic^d_{\mathcal{C}/U}= \dim U + g(C)=(c_1^2+3c_1)/2 + (c_1^2-3c_1+2)/2=c_1^2+1,\]
hence 
    \[\dim \psi^{-1}(U) = c_1^2 + 1 + kr,\]
which equals the expected dimension. 
\end{proof}

When we have instead $\ch_0(\vv) < p(\ch_1(\vv))$, we prove a weaker statement: that the Brill-Noether locus $B^r(\vv)$ is nonempty.

Note that the condition $\chi(\vv) = \ch_0(\vv)$ can be expressed via Riemann-Roch as the polynomial condition
    \[\Delta(\vv) = p(\mu(\vv)) - 1 = \frac{1}{2}\mu(\vv)^2 + \frac{3}{2}\mu(\vv).\]
This defines a parabola $\xi_r$ in the $(\mu, \Delta)$-plane. ($\xi_r$ does not depend on the rank.)

In the region of the $(\mu,\Delta)$-plane with $\mu > 0$, the parabola $\xi_r$ meets the Dr\'ezet-Le Potier curve only at the $^{\perp}\OO$ branch, at the point $(\mu, \Delta) = (1/3, 5/9)$. The Chern characters $\vv \in K(\PP^2)$ lying on $\xi_r$ satisfy $\chi(\vv) = \ch_0(\vv)$, and a general sheaf $E \in M(\vv)$ has $h^0(E) = r(E)$.

\begin{theorem} \label{nonempty}
Let $\vv \in K(\PP^2)$ be a stable Chern character with $\mu(\vv) > 0$. The Brill-Noether locus $B^r(\vv)$ is nonempty.
\end{theorem}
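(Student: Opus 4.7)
The plan is a uniform proof split into two cases based on the sign of $r - \chi(\vv)$: in one, Göttsche--Hirschowitz gives the required sections directly; in the other, we construct $E$ as an extension of a line bundle on a plane curve by $\OO^r$.

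If $\chi(\vv) \geq r$, then since $\mu(\vv) > 0 > -3$, Theorem \ref{GH} shows that the general $E \in M(\vv)$ has $h^0(E) = \chi(E) \geq r$, giving $B^r(\vv) = M(\vv) \neq \emptyset$.

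If instead $\chi(\vv) < r$, we seek $E \in B^r(\vv)$ as a general element of $\Ext^1(\OO_C(D), \OO^r)$, where $C \subset \PP^2$ is a smooth curve of degree $c_1 = \ch_1(\vv)$ and $\OO_C(D)$ is the line bundle on $C$ uniquely determined by $\ch(\OO_C(D)) = \vv - r\,\ch(\OO)$. Any such extension
\[
0 \to \OO^r \to E \to \OO_C(D) \to 0
\]
has $h^0(E) \geq r$ by the cohomology long exact sequence (using $H^1(\OO^r) = 0$). The crucial numerical input is the inequality $\Delta(\vv) \geq p(-\mu(\vv))$: when $\mu(\vv) \geq 1/3$ this follows from $\chi(\vv) < r$ combined with $p(\mu) - 1 \geq p(-\mu)$, and when $0 < \mu(\vv) < 1/3$ it follows from stability, since the $\alpha = 0$ term in the sup defining $\delta$ contributes $p(-|\mu - 0|) - \Delta_{\OO} = p(-\mu)$. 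Equivalently $\chi(\vv, \OO) \leq 0$, which by Riemann--Roch gives $e := \ext^1(\OO_C(D), \OO) = r - \chi(\vv, \OO) \geq r$, so the extension space has dimension $re \geq r^2$ and is nonempty.

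The last step is showing that the general extension is stable. Any destabilizing $F \hookrightarrow E$ fits into $0 \to \OO^s \to F \to \OO_C(D') \to 0$ with $s < r$ (the inclusion $\OO^s \hookrightarrow \OO^r$ determined by an $s$-dimensional subspace of $\C^r$) and $\OO_C(D') \subseteq \OO_C(D)$ a sub-line bundle. The locus of extension classes admitting such an $F$ has dimension at most $s(r - s + e)$; the inequality $e \geq r$ forces this to be strictly less than $re = \dim \Ext^1(\OO_C(D), \OO^r)$, since the maximum over $s \in \{1, \ldots, r - 1\}$ occurs at $s = r - 1$ with value $(r-1)(e+1) = re - (e - r) - 1 < re$. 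Summing over the finitely many admissible sub-line bundles $\OO_C(D') \subseteq \OO_C(D)$ keeps the bad locus a proper closed subvariety of $\Ext^1(\OO_C(D), \OO^r)$, so the general extension is stable and yields $E \in B^r(\vv)$. The main technical obstacle is executing this stability verification uniformly: one must enumerate all sub-line bundles $\OO_C(D') \subseteq \OO_C(D)$, whose number can grow with $\deg D - g(C)$, and bound each contribution.
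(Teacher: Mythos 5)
Your reduction to the case $\chi(\vv) < r$ via G\"ottsche--Hirschowitz is fine, and the numerical input is correct: when $\chi(\vv)<r$ one does get $\Delta(\vv)\ge p(-\mu(\vv))$, hence $\chi(\vv,\OO)\le 0$ and $\ext^1(\OO_C(D),\OO^r)\ge r^2$. The gap is in the stability verification, which is the entire technical content of the theorem. First, a saturated destabilizing subsheaf $F\subset E$ need not meet $\OO^r$ in a coordinate subbundle $\OO^s$: saturated rank-$s$ subsheaves of $\OO^r$ include, e.g., $\OO(-1)=\ker(\OO^2\to\OO(1))$, and such $F$ can still slope-destabilize $E$ because the requirement is only $c_1(F)\ge \lceil r(F)\,c_1/r\rceil$, not $c_1(F)=c_1$ (already for $c_1\ge 2$, $r=2$, a rank-one $F$ with $F\cap\OO^r\cong\OO(-1)$ and full image in $\OO_C(D)$ is a genuine candidate); your parametrization misses all of these. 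Second, the sub-line bundles of $\OO_C(D)$ are the sheaves $\OO_C(D-Z)$ for $Z$ an effective divisor on $C$; they form positive-dimensional families $\mathrm{Sym}^n(C)$, not a finite set, and $\ext^1(\OO_C(D-Z),\OO)$ grows with $\ell(Z)$, so any bound on the bad locus must integrate a fiberwise codimension estimate over this moduli. The bound $s(r-s+e)$ is asserted without derivation and accounts for neither contribution.

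This is not a repairable oversight within a one-paragraph argument: the paper only proves generic stability of extensions $0\to\OO^r\to E\to\OO_C(D)\to 0$ under additional hypotheses --- $r\ge p(c_1)$ in Lemma \ref{corank0lemma}(3), where irreducibility of $C$ together with the section bound $h^0(F)\le r(F)$ from Theorem \ref{pbound} forces any destabilizing quotient to split off a trivial summand, reducing the problem to counting partially split classes; or $\mu\ge 1/3$ and $\deg D\le g-1$ in Lemma \ref{prioritary} and Corollary \ref{Z1}, via completeness of the family and irreducibility of the prioritary stack. Precisely to avoid your uniform stability claim, the paper's proof of Theorem \ref{nonempty} takes a different route in the remaining cases: for $0<\mu\le 1/3$ it inducts on the rank using an extremal decomposition $\vv=\vv'+\vv''$, producing $E$ with $h^0(E)=r$ by choosing the extension class in the kernel of the connecting map $H^0(E'')\to H^1(E')$; for $\mu>1/3$ it shows the minimal-discriminant character satisfies $\chi>r$ and applies elementary modifications supported on the dependency locus of $r$ general sections. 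To complete your argument you would need either a genuine proof of generic stability of these extensions for every stable $\vv$ with $\mu(\vv)>0$ and $\chi(\vv)<r$ (in particular in the regime $3c_1<r<p(c_1)$, where neither of the paper's stability lemmas applies), or a fallback to constructions of the above kind.
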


\begin{proof}
When $r=1$, $c_1\geq 1$, we may take $I_{Z}(c_1)$ where $Z$ lies on a curve of degree $c_1$. When $c_1=1$, by Theorem \ref{depth} the theorem is true for $r \geq 3$. Hence in the following we assume $r>1$, $c_1>1$ or $c_1=1, r=2$. We separate into two cases. 

\textit{Case I.} In the first case we assume $0 < \mu(\vv) \leq 1/3$; in particular, $\vv$ lies above the $^{\perp}\OO$-branch of the Dr\'ezet-Le Potier curve. We induct on the rank for a stronger statement: there exists $E \in  M(\vv)$ such that $h^0(E)=\ch_0(E)$. When the rank is 1 or 2 the statement is vacuous. When $r=3$, $c_1=1$, by Theorem \ref{depth} the theorem is true. 

Let $\vv'$ be the associated extremal character, and $\vv'' = \vv - \vv'$ the quotient character. Then $0 \leq \mu(\vv') \leq \mu(\vv)\leq 1/3$. We have $\mu(\vv')=0$ if and only if $c_1=1$, where the only possiblity for the theorem to be false is $r=2$, which is not considered in this case, hence $0< \mu(\vv')\leq 1/3$.

We also have $0 < \mu(\vv'') \leq 1/3$. To see this, first consider the case where $\mu(\vv') = \mu(\vv)$, which occurs if and only if $\ch_0(\vv)$ and $\ch_1(\vv)$ are not coprime. In this case, $\mu(\vv'') = \mu(\vv) \leq 1/3$. Otherwise, $\ch_0(\vv)$ and $\ch_1(\vv)$ are coprime. If $\mu(\vv) = 1/3$, then $\ch_0(\vv) = 3$ and $\ch_1(\vv) = 1$, which has already been addressed. In the remaining cases, $\ch_0(\vv) \geq 4$. In this case $\mu(\vv'')$ is the right Farey neighbor of $\mu(\vv)$, which is no greater than $1/3$.


An immediate consequence is that the extremal decomposition exists in this case. Denote the Dr\'ezet-Le Potier curve by $\Delta=\delta(\mu)$. If $\mu(\vv')< \mu(\vv)$, then $\Delta(\vv')\leq \delta(\mu(\vv'))$ since $(r(\vv'), \mu(\vv'), \delta(\mu(\vv')))$ is integral ($\vv'$ is either on the Dr\'ezet-Le Potier curve or semi-exceptional). Then $\chi(\vv', \OO)\geq 0$ (with equality only if $\vv' = \ch(\mathcal{O}^k)$), and $\chi(\vv, \OO)\leq 0$, hence $\chi(\vv'', \OO)=\chi(\vv, \OO)-\chi(\vv', \OO)\leq 0$, so $\vv''$ lies on or above the Dr\'ezet-Le Potier curve since $0< \mu(\vv'')\leq 1/3$. If $\mu(\vv')=\mu(\vv)$, we may write $\vv=(mr_{0}, \mu, \delta(\mu)+\frac{k}{mr}) $ where $\ch_1(\vv)=md_{0}$, $\mu=d/r_{0}$, and $r_{0}, d_{0}$ are coprime. Let $\vv'_{0}:=(r_{0}, \mu, \delta(\mu)+\frac{k}{r})$, $\vv''_{0}:=((m-1)r_{0}, \mu, \delta(\mu))$, one may check that $\vv'_{0}$ and $\vv''_{0}$ are indeed integral. Then $\vv=\vv'_{0}+\vv''_{0}$. Now clearly $\Delta(\vv'_{0})\geq \Delta(\vv)$, hence by definition of extremal character, $\Delta(\vv')\leq \Delta(\vv'_{0})$, hence $\Delta(\vv'')\geq \Delta(\vv''_{0})=\delta(\mu)$, so $\vv''$ also lies on or above the Dr\'ezet-Le Potier curve.

By assumption $r \geq 2$, both $\vv'$ and $\vv''$ have smaller rank than $\vv$, thus by induction we can assume that there are $E' \in M(\vv')$ and $E'' \in M(\vv'')$ satisfying $h^0(E') = r(E')$ and $h^0(E'') = r(E'')$, and by Proposition \ref{extremal} any nontrivial extension sheaf
    \[0 \rightarrow E' \rightarrow E \rightarrow E'' \rightarrow 0\]
is semistable. To show $h^0(E) = r(E) = r(E') + r(E'')$, it is enough to show that we can find such an extension such that the associated connecting homomorphism $H^0(E'') \rightarrow H^1(E')$ vanishes.

Indeed, the association $e \mapsto \delta_e$ determines a linear map 
    \[\Ext^1(E'', E') \rightarrow \Hom(H^0(E''),H^1(E')).\]
We need to show that it has nonzero kernel; in fact, the dimension of the source exceeds the dimension of the target. By stability and Serre duality, $\ext^2(E'', E') =\hom(E',E''(-3))= 0$, so
    \[\ext^1(E'', E') \geq -\chi(E'', E') = -r'r''(p(\mu' - \mu'') - \Delta' - \Delta''),\]
and we know already that $h^0(E'') = r''$, and
    \[h^1(E') = h^0(E') - \chi(E') = r' - r'(p(\mu') - \Delta').\]
Since $\vv''$ lies above the $^{\perp}\OO$-branch of the Dr\'ezet-Le Potier curve,
$\chi(E'', \OO) \leq 0$. It follows in particular from Riemann-Roch that $\Delta'' \geq p(-\mu'')$. We then have:
    \begin{align*}
        \ext^1(E'', E) - \hom(H^0(E''), H^1(E')) &\geq -r'r''(p(\mu' - \mu'') - \Delta' - \Delta'') \\& \qquad - r''(r' -r'(p(\mu')-\Delta')) \\
        &= -r'r''(p(\mu' - \mu'')-p(\mu') - \Delta''+1) \\
        &\geq -r'r''(p(\mu' - \mu'')-p(\mu') - p(-\mu'') + 1) \\
        &= r'r''\mu'\mu'' = c_1'c_1'' > 0
    \end{align*}
as desired. We conclude there is $E \in M(\vv)$ with 
    \[h^0(E) = h^0(E') + h^0(E'') = r(E') + r(E'') = r(E),\]
i.e., $B^r(\vv) \neq \emptyset$.

\textit{Case II.} In the second case, we assume $\mu(\vv) > 1/3$. Let $\vv \in K(\PP^2)$ be a stable Chern character of slope $\mu = \mu(\vv)$ and rank $r = \ch_0(\vv)$, and let $\Delta_0$ be the minimum discriminant of a stable character of slope $\mu$ and rank $r$. Then we may write $\Delta(\vv) = \Delta(\vv_0) + k/r$ for some $k \geq 0$. For every rational number $\mu' = c_1'/r'$, the character $\vv' = (r',\mu',\xi_r(\mu))$ is integral since $\chi(\vv') = r'$ is an integer. 

\begin{figure}[htb]
	\centering
	\includegraphics{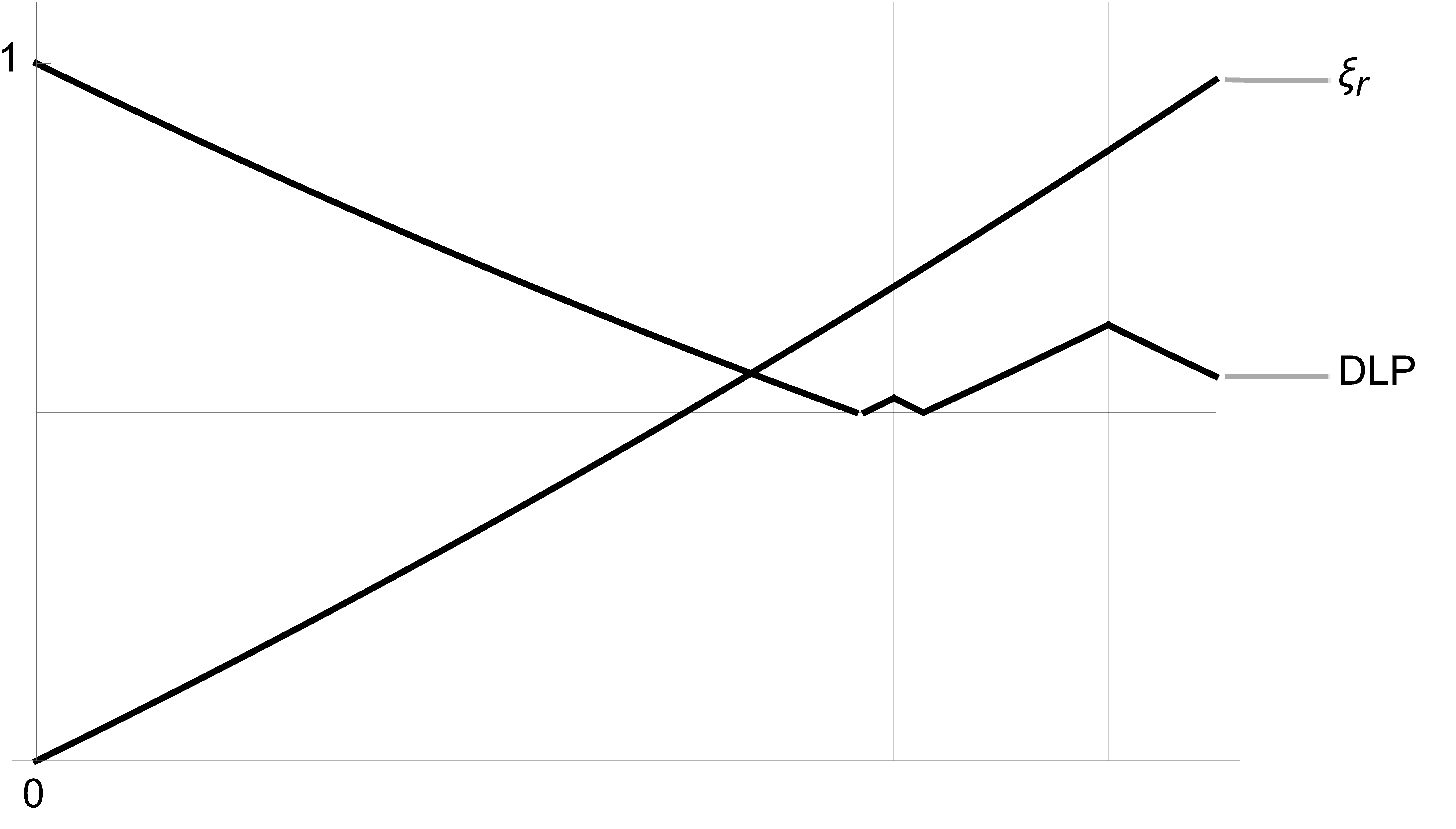}
	\caption{The regions verifying $\xi_r(\mu) - \delta(\mu) > 1/r$.}
	\label{fig:mindisc}
	\label{fig:coords}
\end{figure}

We claim that the minimal-discriminant character $\vv_0 = (\mu(\vv), \Delta_0)$ lies below the parabola $\xi_r$ in the $(\mu, \Delta)$-plane, so $\chi(\vv_0) > r$. To check this we will show that $\xi_r(\mu) - \delta(\mu) > 1/r$, so that any character with discriminant larger than $\xi_r(\mu)$ is obtained from one below $\xi_r$ by an elementary modification, so is not minimal. We divide the $(\mu, \Delta)$-plane into smaller regions, see Figure \ref{fig:mindisc}: first, it is enough to check this for $1/3 < \mu < 1/2$, since the Dr\'ezet-Le Potier curve is symmetric along the line $\mu = 1/2$. For slopes $\mu$ lying directly above the $^{\perp}\OO$-branch of the Dr\'ezet-Le Potier curve, one can easily check that the points $(\mu, \delta(\mu))$ are integral, so the claim is true here. This branch terminates at $\mu = (3 - \sqrt{5})/2$ (see \cite[\S 2.3]{CHW}); the branch $T_{\PP^2}(-1)^{\perp}$ to the left of the line $\mu = 1/2$ terminates at $\mu = 1/2 - (3-\sqrt{8})/2$. In this region we have the coarse bounds
    \begin{align*}
        \xi_r(\mu) - \delta(\mu) &\geq \xi_r(\mu) - (\text{peak of DLP curve above } E_{2/5}) \\
        &= \xi_r(\mu) - \frac{13}{25} \geq \xi_r\left( \frac{3-\sqrt{5}}{2} \right) - \frac{13}{25} \approx .12589
    \end{align*}
Thus if $\xi_r(\mu) - \delta(\mu) \leq 1/r$ then $r < 9$, and the few slopes $\mu = c_1/r$ with $r < 9$ in this region can be easily checked to have minimal discriminants lying below the parabola $\xi_r$.

Now for slopes $\mu$ lying directly above the $T_{\PP^2}(-1)^{\perp}$-branch, we have
    \[\xi_r(\mu) - \delta(\mu) = \xi_r(\mu) - \left( \frac{1}{2}\left( \mu-\frac{1}{2} \right)^2 + \frac{3}{2}\left( \mu - \frac{1}{2} \right) + 1 \right) = \frac{\mu}{2}\]
and $\mu/2 \leq 1/r$ only when $c_1 \leq 2$. All statements are clear when $c_1 = 1$, and when $r \geq 5$, $\mu = 2/r$ is to the left of this region, and for smaller ranks the minimal discriminant is easily computed to be below $\xi_r(\mu)$.

By generic slope-stability (\cite[Corollaire 4.12]{DLP}), a general member of $M(\vv_0)$ is slope-stable. Choose such a $E_0 \in M(\vv_0)$ with $r$ independent sections $s_1, ..., s_r \in H^0(E_0)$; choose a codimension two subscheme $Z$ of length $k$. Then the elementary modification $E$ defined by the short exact sequence
    \[0 \rightarrow E \rightarrow E_0 \rightarrow \OO_Z \rightarrow 0\]
has Chern character $\vv$. Since $E_0$ is slope-stable, so is $E$ (see Section \ref{jumpbyr}). Furthermore we can choose the map $E_{0} \rightarrow \OO_{Z}$ such that the map $H^0(E_0) \rightarrow H^0(\OO_Z)$ vanishes, since the dependency locus of $s_1, \cdots, s_{r}$ has dimension at least 1. So $h^0(E) \geq r$ for all $E$ constructed in this way.
\end{proof}

\section{Irreducibility \& reducibility of Brill-Noether loci} \label{redsection}
In this section we address the reducibility of the Brill-Noether loci $B^k(\vv)$. We show that when $\ch_1(\vv) = 1$, all Brill-Noether loci are irreducible and of the expected dimension. When $\ch_1(\vv) > 1$, we give examples where the Brill-Noether loci are reducible, and describe their components.

\begin{theorem} \label{irred}
Suppose $\vv \in K(\PP^2)$ is a stable Chern character with $\ch_1(\vv) = 1$. Then all of the Brill-Noether loci $B^k(\vv)$ are irreducible and of the expected dimension.

Each Brill-Noether locus $B^k(\vv)$ is the quotient of a projective bundle over some $M(\ww)$ by a free action of a projective linear group, where $\ch_1(\ww) = 1$ as well.
\end{theorem}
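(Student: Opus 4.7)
The plan is to parametrize $B^k(\vv)$ via extensions. Set $\ww := \vv - k \cdot \ch(\OO_{\PP^2})$, so that $\ch_1(\ww) = 1$, $r(\ww) = r - k$, and $\ch_2(\ww) = \ch_2(\vv)$. I aim to realize $B^k(\vv)$ as the $PGL_k$-quotient of a projective bundle over $M(\ww)$ whose fiber over $Q$ is $\PP \Ext^1(Q, \OO^k)$. The cases $k < \chi(\vv)$ (where $B^k(\vv) = M(\vv)$ is irreducible by Theorem \ref{GH}, reducing to the parametrization at $k = \chi(\vv)$) and $k = r$ (covered by Lemma \ref{corank0lemma} and Theorem \ref{depth}) are handled separately, so below I assume $\chi(\vv) \leq k < r$. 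For the forward direction, let $E \in B^k(\vv)$ be general, so $h^0(E) = k$. I would show the evaluation map $\ev_E \colon H^0(E) \otimes \OO \to E$ is injective with stable cokernel $Q$ of Chern character $\ww$: letting $F \subseteq E$ denote the image, if $r(F) = s < k$ then every Harder-Narasimhan graded piece $\gr_i$ of $F$ satisfies $\mu(\gr_i) \leq \mu(E) = 1/r$ and $c_1(\gr_i) \leq 0$ (since $c_1(\gr_i) \geq 1$ with $r(\gr_i) \leq s < r$ would force $\mu(\gr_i) > 1/r$), so Theorem \ref{pbound} gives $h^0(F) \leq \sum r(\gr_i) = r(F) < k$, a contradiction. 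The kernel of $\ev_E$ is then a torsion-free rank-$0$ subsheaf of $\OO^k$, hence zero. Stability of $Q$ follows by slope analysis: any $F' \subsetneq Q$ with $\mu(F') \geq 1/(r-k)$ has preimage in $E$ destabilizing $E$, and coprimality of $r-k$ and $1$ upgrades $\mu$-semistability to $\mu$-stability.

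For the reverse direction, given general $Q \in M(\ww)$, Theorem \ref{GH} gives $h^0(Q) = 0 = h^2(Q)$ (since $\chi(Q) = \chi(\vv) - k \leq 0$ and $\mu(Q) > -3$), so every extension $\xi \in \Ext^1(Q, \OO^k)$ yields a sheaf $E$ with $h^0(E) = k$. The central claim is that for \emph{generic} $\xi$, $E$ is stable. Suppose $F \subsetneq E$ destabilizes; set $K = F \cap \OO^k$ and $F' = \mathrm{im}(F \to Q)$. The case $c_1(F') \leq 0$ forces $c_1(F) \leq 0$, contradicting $\mu(F) \geq 1/r > 0$; otherwise $c_1(F') = 1$, and combined with $\mu(F') \leq \mu(Q)$ and $r(F') \leq r - k$, stability of $Q$ forces $F' = Q$. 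Then $F$ fits as a sub-extension $0 \to K \to F \to Q \to 0$ inside $0 \to \OO^k \to E \to Q \to 0$, with $K \subsetneq \OO^k$ and $c_1(K) = 0$, so the extension class of $E$ lies in the image of $\Ext^1(Q, K) \to \Ext^1(Q, \OO^k)$ induced by $K \hookrightarrow \OO^k$. The critical technical point is that this image is a proper linear subspace for every such $K$: Riemann-Roch gives $\chi(Q, \OO) = \chi(\vv) - k - 3 < 0$, and Serre duality with stability of $Q$ yields $\hom(Q, \OO) = \ext^2(Q, \OO) = 0$, so $\ext^1(Q, \OO^k/K) > 0$ for $K \subsetneq \OO^k$, forcing the map to have nonzero cokernel. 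A Quot-scheme dimension count over proper subsheaves $K \subsetneq \OO^k$ with $c_1(K) = 0$ then shows the union of these proper subspaces does not exhaust $\Ext^1(Q, \OO^k)$, so a generic $\xi$ lies outside all of them.

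With the parametrization in hand, the projective bundle $\PP$ over $M(\ww)$ is irreducible (since $M(\ww)$ is, by Dr\'ezet-Le Potier), and the classifying rational map $\PP \dashrightarrow B^k(\vv)$ is dominant by the forward direction, giving irreducibility of $B^k(\vv)$. The fibers are $PGL_k$-orbits, with free action: the stabilizer of a generic $\xi$ in $GL_k$ corresponds to an automorphism of $E$ preserving the subsheaf $\OO^k$, which is scalar by stability of $E$ ($\Aut(E) = \C^*$). Riemann-Roch gives $\ext^1(Q, \OO^k) = -k\chi(Q, \OO) = k(k - \chi(\vv) + 3)$, so
\[
\dim B^k(\vv) = \dim M(\ww) + k(k - \chi(\vv) + 3) - k^2.
\]
A direct computation using $\chi(\vv) = r + \tfrac{3}{2} + \ch_2(\vv)$ and the Dr\'ezet-Le Potier formula $\dim M(\vv) = r^2(2\Delta(\vv) - 1) + 1$ identifies this with the expected dimension $\dim M(\vv) - k(k - \chi(\vv))$. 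The main obstacle is the stability step in the reverse direction, specifically the Quot-scheme dimension estimate controlling the locus of bad sub-extensions.
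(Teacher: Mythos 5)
Your overall strategy --- realizing $B^k(\vv)$ as a $PGL_k$-quotient of the projective bundle of extensions $0 \to \OO^k \to E \to Q \to 0$ over $M(\ww)$ with $\ww = \vv - k\,\ch(\OO)$, and deducing irreducibility from that of $M(\ww)$ --- is exactly the paper's, and your forward direction, dimension count, and freeness argument all match the paper's. Two small omissions in the forward direction: you must rule out zero-dimensional torsion in $Q$ (slope and Hilbert-polynomial comparisons do not see it, and stability in this paper presupposes purity; the twisting argument of Lemma \ref{corank0lemma} (\ref{one}) does the job), and when $r-k=1$ Theorem \ref{GH} does not apply to $Q$, though $Q \simeq I_Z(1)$ with $\ell(Z)\geq 3$ general still has no sections.

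The genuine gap is in the stability step of the reverse direction, at ``stability of $Q$ forces $F'=Q$.'' Stability only gives $p_{F'} < p_Q$ for a proper subsheaf, and a full-rank subsheaf $F' \subsetneq Q$ with zero-dimensional cokernel satisfies this while still having $c_1(F')=1$ and $\mu(F')=\mu(Q)$. So a destabilizing $F \subseteq E$ need not surject onto $Q$, need not be a sub-extension, and your identification of the bad locus with $\bigcup_K \mathrm{im}\bigl(\Ext^1(Q,K)\to\Ext^1(Q,\OO^k)\bigr)$ is not established. Even granting that identification, the family of subsheaves $K \subsetneq \OO^k$ with $c_1(K)=0$ is unbounded (the colength of $K$ in its saturation is arbitrary), so the Quot-scheme count you propose does not close up as stated; moreover $\ext^1(Q,\OO^k/K)>0$ does not by itself give a nonzero cokernel of $\Ext^1(Q,K)\to\Ext^1(Q,\OO^k)$ unless you also know the map $\Ext^1(Q,\OO^k)\to\Ext^1(Q,\OO^k/K)$ is nonzero, which can fail for non-saturated $K$ because of the connecting map to $\Ext^2(Q,K)$. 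The clean repair, which is essentially what the paper extracts from Proposition \ref{extremal}, is to run the argument on a stable torsion-free destabilizing \emph{quotient} $E \twoheadrightarrow G$: then $\mu(G)\leq 0$, the composite $\OO^k \to G$ is nonzero since $\Hom(Q,G)=0$ by slopes, and a stable torsion-free sheaf of nonpositive slope admitting a section and satisfying $\Delta \geq 0$ must be $\OO$ itself. Hence the only failure of semistability is that $E$ splits off a trivial summand, i.e.\ the $k$ components of $\xi$ in $\Ext^1(Q,\OO)$ are linearly dependent --- a proper closed condition because $\ext^1(Q,\OO) = k - \chi(\vv,\OO) \geq k$, using that $\vv$ lies on or above the $^{\perp}\OO$-branch. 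With that substitution your parametrization, dimension count, and freeness argument go through as in the paper.
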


\begin{proof}
Choose $E \in M(\vv)$ and set $r = r(E)$. Consider the image sheaf $0 \neq F \subseteq E$ of the evaluation map on global sections for $E$. The proof of Theorem \ref{pbound}, Case I implies that if $r \geq 3$, then $h^0(E) \leq r$, and that the evaluation map in this case is a sheaf-theoretic embedding. (In rank 2, see Remark \ref{rmkdegone}. The only sheaf whose evaluation map on global sections is not an embedding is $T_{\PP^2}(-1)$, and here the statements are vacuous.) Thus every $E \in M(\vv)$ is given by an extension
    \begin{equation} \label{extension}
        0 \rightarrow \OO^{r-s} \rightarrow E \rightarrow E' \rightarrow 0
    \end{equation}
where $h^0(E) = r-s$, $s \geq 0$, and $E'$ has $c_1(E') = c_1(E) = 1$ and $r(E') = s$. In fact when $s \geq 0$ all such $E'$ are semistable, and when $s > 0$, torsion-free. To check torsion-freeness, consider the torsion part $T$ of $E'$, which sits in an exact sequence
    \[0 \rightarrow T \rightarrow E' \rightarrow E'/T \rightarrow 0.\]
Then arguing as in the proof of Lemma \ref{corank0lemma} (\ref{one}), we see that $T$ is supported in dimension 1. Then $c_1(E'/T) = c_1(E') - c_1(T) \leq 0$, and the quotient $E \rightarrow E'/T$ destabilizes $E$. Now for stability, if $s \geq 1$ then $c_1(E')$ and $r(E') = s$ are coprime, any destablizing quotient $Q$ of $E'$ would have $\mu(Q)<\mu(E')$, but since $c_{1}(E')=1$, we have $\mu(Q)\leq 0$, so $Q$ would be a destablizing quotient of $E$. When $s = 0$, by Lemma \ref{corank0lemma} (\ref{two}), $E'$ is of the form $\OO_L(a)$ for some $a \in \Z$ and $L \subseteq \PP^2$ a line, hence stable. Set $\ww = \ch(E') = \ch(E) - (r-s) \ch(\OO)$. 

We form extension parametrizations $\phi_{r-s}: \PP_{r-s} \dashrightarrow M(\vv)$ whose image lies in $B_{r-s - \chi(\vv)}(\vv)$ as follows. The expected extensions (\ref{extension}) are determined by classes in $\Ext^1(E', \OO^{r-s})$. Since $\Hom(E', \OO^{r-s})=\Ext^2(E',\OO^{r-s})=0$, $\ext^1(E', \OO^{r-s})=-\chi(E',\OO^{r-s})$ is constant, so the extension parametrization is defined over the whole moduli space $M(\ww)$. $\PP_{r-s}$ maps rationally to $B_{r-s - \chi(\vv)}(\vv) \subseteq M(\vv)$.

When $s > 0$, Proposition \ref{extremal} implies that for any $F' \in M(\ww)$ any nonzero extension $0 \neq [F] \in \Ext^1(F', \OO^{h^0(E)})$ is stable. When $s = 0$, Lemma \ref{corank0lemma} (\ref{three}) implies the same statement. In particular, each of the extension parametrizations $\phi_k: \PP_k \rightarrow M(\vv)$ is a morphism. Since each $E'$ as above is stable, it follows that $\phi_k$ surjects onto $B^k(\vv)$. The moduli spaces $M(\vv')$ are irreducible, so each $\PP_k$ is as well. It follows that so is each $B^k(\vv)$. We now show that they are of the expected dimension.

We will need to know the other numerical invariants of $E$ and $E'$. They are determined as follows. Set $\ch_2(E) = a - 1/2 = \ch_2(E')$ (see Remark \ref{a}). This gives
    \[\Delta(E) = \frac1{2r^2} - \frac{a}r + \frac1{2r}, \quad \chi(E) = r(p(1/r) - \Delta(E)) = r + a + 1.\]
Similarly,
    \[\Delta(E') = \frac1{2s^2} - \frac{a}{s} + \frac1{2s}.\]
The dimension of $\PP$ is
    \[\dim \phi(\PP) = \dim M(\ww) + \ext^1(E', \OO^{r-s})-1.\]
The latter quantity is, by Serre duality, Theorem \ref{GH}, and Riemann-Roch,
    \begin{align*}
        \ext^1(E', \OO^{r-s}) &= (r-s) \cdot h^1(E'(-3)) \\
        &= -(r-s) \cdot \chi(E'(-3)) \\
        &= -(r-s) \cdot s(P(1/s-3) - \Delta(E')) \\
        &= -(r-s) \cdot (s+a-2). 
    \end{align*}

The dimensions of the moduli spaces are
    \[\dim M(\vv) = r^2(2\Delta(E) - 1) + 1 = -r^2 + (1-2a)r + 2\]
and 
    \[\dim M(\ww) = s^2(2\Delta(E') - 1) + 1 = -s^2 + (1-2a)s + 2.\]
The fibers of $\phi$ are of dimension $\Aut(\OO^{r-s}) = (r-s)^2-1$, so the dimension of the image of $\phi$ is
    \begin{align*}
        \dim \phi(\PP) &= s^2(2\Delta(E') - 1) + 1 - (r-s) \cdot (s+a-2) - (r-s)^2+1 \\
        &= -s^2 + (1-2a)s + 2 -(r-s) \cdot (s+a-2) - 1 - (r-s)^2 + 1\\
        &= -r^2 +(s-a+2)r - s^2 - (a+1)s + 2.
    \end{align*}

This is the actual dimension of the Brill-Noether locus; we now check that the expected dimension of the Brill-Noether loci equals the actual dimension.

Set $m = r - s - \chi(\vv)$; the expected codimension of the Brill-Noether locus $B_m(\vv)$ is 
    \begin{align*}
        \expcodim_m(\vv) &= (r - s) \cdot m \\
        &= -(r - s) \cdot (r - s - \chi(\vv)) \\
        &= (s-r) \cdot (s - a - 1) \\
        &= s^2 +(1-r+a)s -ar -r.
    \end{align*}
The expected dimension is then
    \begin{align*}
        \expdim_m(\vv) &= \dim M(\vv) - (s^2 +(1-r+a)s -ar -r) \\
        &= -r^2 + (1-2a)r + 2 - s^2 - (1-r+a)s + ar + r \\
        &= -r^2 + (s-a+2)r - s^2 - (a+1)s + 2
    \end{align*}
as required.

For the latter statement, we consider the map $\PP_{k} \rightarrow B^{k}(\vv)$. By Lemma 6.3 in \cite{CH2}, all fibers of this map are $\PP GL_{k}$.
\end{proof}

\begin{remark} \label{a}
When $h^0(E) = r$, the associated extension is of the form
    \[0 \rightarrow \OO^r \rightarrow E \rightarrow \OO_L(a) \rightarrow 0\]
for $L \subseteq \PP^2$ a line and $a \leq 0$. In this case $\ch_2(E) = a - 1/2$. We regard $a$ as an invariant of $E$ for other values of $h^0(E)$ via this formula.
\end{remark}

\subsection{Reducible Brill-Noether loci} \label{red}
When $\ch_1(\vv) > 1$, the Brill-Noether loci can be reducible.

When $\ch_0(\vv) = 1$, we consider twists of ideal sheaves $I_Z(d)$, where $d > 0$ and $Z \subseteq \PP^2$ is a subscheme of finite length. These moduli spaces are isomorphic to the Hilbert scheme $\PP^{2[n]}$, but their cohomological properties depend on the twist. We often write $\PP^{2[n]}(d)$ for these moduli spaces. 

\begin{proposition} \label{twist}
Let $\mathbf{v} = (1, d, d^2/2 - n)$ be the Chern character of a twist $I_Z(d)$ of an ideal sheaf of a scheme of length $n$. If $n > d^2$, the Brill-Noether loci $B^2(\mathbf{v})$ are reducible, and have at least $d-1$ components. 
\end{proposition}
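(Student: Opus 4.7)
Since $M(\vv) = (\PP^2)^{[n]}$, the locus $B^2(\vv)$ parametrizes length-$n$ subschemes $Z \subseteq \PP^2$ lying on a pencil $V \subseteq H^0(I_Z(d))$ of plane curves of degree $d$. By B\'ezout, two coprime curves of degree $d$ meet in $d^2$ points, so the hypothesis $n > d^2$ forces every such pencil to have gcd of positive degree. I will attach to each $Z \in B^2(\vv)$ the invariant $k(Z) := \deg \gcd H^0(I_Z(d)) \in \{1, \ldots, d-1\}$, construct for each $k$ an irreducible subvariety $\mathcal{B}_k \subseteq B^2(\vv)$ whose generic member satisfies $k(Z) = k$, and argue the $\mathcal{B}_k$ lie in pairwise distinct irreducible components.

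For fixed $k \in \{1,\ldots,d-1\}$, I parametrize $\mathcal{B}_k$ by triples $(D, V', Z)$, where $D \subseteq \PP^2$ is a general irreducible plane curve of degree $k$, $V' \subseteq H^0(\OO_{\PP^2}(d-k))$ is a general 2-dimensional subspace with coprime generators and base locus $B(V')$ consisting of $(d-k)^2$ points generically disjoint from $D$, and $Z = Z_D \sqcup B(V')$ with $Z_D \subseteq D$ a general length-$(n-(d-k)^2)$ subscheme. The pencil $D \cdot V' \subseteq H^0(I_Z(d))$ certifies $Z \in B^2(\vv)$, and a routine parameter count gives
\[
    \dim \mathcal{B}_k = n + \tfrac{1}{2}(k^2 - 3k + 6d - 4).
\]
Since its parameter space is a product of irreducible varieties, $\mathcal{B}_k$ is irreducible.

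To see the $\mathcal{B}_k$ lie in distinct components of $B^2(\vv)$, first observe that a generic $Z \in \mathcal{B}_k$ contains a subscheme $Z_D$ of length $n - (d-k)^2$ supported on an irreducible curve of degree $k$; by B\'ezout with $n > d^2$ no generic $Z \in \mathcal{B}_{k'}$ with $k' \neq k$ has a comparable subscheme on a curve of degree $k$, so no $\mathcal{B}_k$ sits inside another. The step I expect to be the main obstacle is promoting this pairwise noncontainment to the stronger assertion that no irreducible component of $B^2(\vv)$ contains two different $\mathcal{B}_k$'s. This requires showing that the integer invariant $k(Z)$ is constant on a Zariski open subset of each irreducible component of $B^2(\vv)$---a flat-family statement about how the gcd of the varying pencil $H^0(I_{Z_t}(d))$ behaves as $Z_t$ moves in a flat family, with the bound $n > d^2$ used twice: once to keep $k(Z) \geq 1$ throughout $B^2(\vv)$ and once to rigidify the gcd enough to make this constancy run.
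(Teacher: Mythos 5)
Your stratification is essentially the paper's: the fixed divisor (gcd) of the pencil $H^0(I_Z(d))$, which is forced to be nonempty by B\'ezout since $n > d^2$, and the strata $\mathcal{B}_k$ indexed by its degree $k \in \{1,\dots,d-1\}$ match the paper's loci $B^2_e$. But the proof has a genuine gap exactly where you flag it: pairwise noncontainment of the $\mathcal{B}_k$ does not prevent two of them from sitting inside one larger irreducible component of $B^2(\vv)$, and you do not supply the argument that rules this out. The "flat-family constancy of $k(Z)$ on each component" you gesture at is the right statement to want, but it is not automatic (an integer invariant defined via a gcd can a priori jump in families in either direction on a locus like $B^2$, which is itself defined by cohomology jumping), and without it the proposition --- reducibility and the count of at least $d-1$ components --- is not established. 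The dimension count you include does not substitute for this, since the ambient components' dimensions are not known.

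The paper closes this gap by passing to the classifying map $\phi: B^2(\vv) \dashrightarrow G(2, p(d))$ sending a general $Z$ (which lies in $B^2 \smallsetminus B^3$, so its pencil is determined) to the pencil $H^0(I_Z(d))$. A pencil with fixed part of degree $k$ is a line on the Segre variety $\Sigma_{k,d-k} \subseteq \PP H^0(\OO_{\PP^2}(d))$ contained in a fiber of one of its projections; as $k$ varies these Segre varieties have no containments, and the corresponding loci $P_{k,d-k}$ of such lines form distinct irreducible components of $\overline{\mathrm{im}\,\phi}$. Since each stratum dominates its $P_{k,d-k}$, an irreducible component of $B^2(\vv)$ containing two strata would have irreducible image meeting the generic points of two distinct components of $\overline{\mathrm{im}\,\phi}$, which is impossible. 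If you graft this Grassmannian argument onto your construction (and check that your generic $Z \in \mathcal{B}_k$ has $h^0(I_Z(d)) = 2$ exactly, so that the pencil is well defined), your proof goes through.
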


\begin{proof}
Choose $Z$ such that $h^0(I_Z(d)) \geq 2$, and let $C_1, C_2 \subseteq \PP^2$ be curves of degree $d$ containing $Z$. If $C_1 \cap C_2$ were zero-dimensional it would have length $d^2 < \ell(Z)$, so in particular it could not contain $Z$. We conclude that $C_1 \cap C_2$ has one-dimensional components. The general such $Z$ lies in $B^2 \smallsetminus B^3$ so the pencil spanned by $C_1$ and $C_2$ is determined by $Z$. In this case we call $C_Z$ the union of the one-dimensional components of $C_1 \cap C_2$; the general such $Z$ will have $C_Z$ irreducible. 

For any $0 < e < d$, we examine the following loci in $B^2$:
    \[B^2_e := \overline{\{Z : C_Z \text{ is irreducible with } \deg(C_Z) = e\}}.\]
Clearly $B^2$ is the union of the $B^2_e$ as $e$ ranges between 1 and $d-1$; we claim that each lies in a distinct irreducible component of $B^2$.

Indeed, the association of a general element $I_Z(d) \in B^2$ to the pencil of reducible degree $d$ curves containing it induces a rational map $\phi: B^2 \dashrightarrow G(2, p(d))$, and the pencils themselves are lines on a Segre variety $\Sigma_{d-k,k} \subseteq \PP H^0(\OO_{\PP^2}(d))$ lying in a fiber of one of its projections $\Sigma_{d-k,k} \rightarrow \PP H^0(\OO_{\PP^2}(d-k)), \PP H^0(\OO_{\PP^2}(k))$. One can check that each such line (given by fixing a smooth curve of degree $d-k$ and a pencil of curves of degree $k$) lies in the image of $\phi$. Denote by $P_{d-k,k}$ the loci of such lines in $G(2, p(d))$.

As $k$ varies between 1 and $d-1$, there are no containments among the Segre varieties $\Sigma_{d-k,k}$ in $\PP H^0(\OO_{\PP^2}(d))$, and the line classes in the fibers of the projections form irreducible components of the image of $\phi$. Each of the loci $B^2_e$ dominates $P_{e, d-e}$, so the claim follows.
\end{proof}

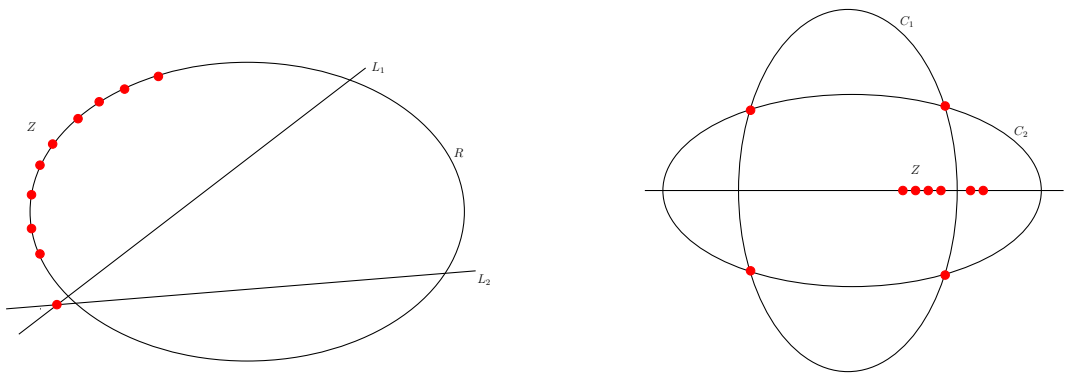
\begin{figure} 
    \centering
    \scalebox{.4}{
    \begin{tikzpicture}[x=0.4pt, y=0.4pt]
    \draw[solid, draw={rgb,255:red,0;green,0;blue,0}, draw opacity=1, line width=1, fill={rgb,255:red,0;green,0;blue,0}, fill opacity=0] (590,390) ellipse (513.4119409414459 and 353.41194094144583);
    \draw[solid, draw={rgb,255:red,0;green,0;blue,0}, draw opacity=1, line width=1, ] (50,100) -- (870,730);
    \draw[solid, draw={rgb,255:red,0;green,0;blue,0}, draw opacity=1, line width=1, ] (100,160) -- (100,160);
    \draw[solid, draw={rgb,255:red,0;green,0;blue,0}, draw opacity=1, line width=1, ] (20,160) -- (1130,250);
    \draw[solid, draw={rgb,255:red,255;green,0;blue,0}, draw opacity=1, line width=1, fill={rgb,255:red,255;green,0;blue,0}, fill opacity=1] (130,550) ellipse (10 and 10);
    \draw[solid, draw={rgb,255:red,255;green,0;blue,0}, draw opacity=1, line width=1, fill={rgb,255:red,255;green,0;blue,0}, fill opacity=1] (270,560) ellipse (0 and 0);
    \draw[solid, draw={rgb,255:red,255;green,0;blue,0}, draw opacity=1, line width=1, fill={rgb,255:red,255;green,0;blue,0}, fill opacity=1] (80,430) ellipse (10 and 10);
    \draw[solid, draw={rgb,255:red,255;green,0;blue,0}, draw opacity=1, line width=1, fill={rgb,255:red,255;green,0;blue,0}, fill opacity=1] (140,170) ellipse (10 and 10);
    \draw[solid, draw={rgb,255:red,255;green,0;blue,0}, draw opacity=1, line width=1, fill={rgb,255:red,255;green,0;blue,0}, fill opacity=1] (100,500) ellipse (10 and 10);
    \draw[solid, draw={rgb,255:red,255;green,0;blue,0}, draw opacity=1, line width=1, fill={rgb,255:red,255;green,0;blue,0}, fill opacity=1] (100,290) ellipse (10 and 10);
    \draw[solid, draw={rgb,255:red,255;green,0;blue,0}, draw opacity=1, line width=1, fill={rgb,255:red,255;green,0;blue,0}, fill opacity=1] (300,680) ellipse (10 and 10);
    \draw[solid, draw={rgb,255:red,255;green,0;blue,0}, draw opacity=1, line width=1, fill={rgb,255:red,255;green,0;blue,0}, fill opacity=1] (240,650) ellipse (10 and 10);
    \draw[solid, draw={rgb,255:red,255;green,0;blue,0}, draw opacity=1, line width=1, fill={rgb,255:red,255;green,0;blue,0}, fill opacity=1] (80,350) ellipse (10 and 10);
    \draw[solid, draw={rgb,255:red,255;green,0;blue,0}, draw opacity=1, line width=1, fill={rgb,255:red,255;green,0;blue,0}, fill opacity=1] (190,610) ellipse (10 and 10);
    \draw[solid, draw={rgb,255:red,255;green,0;blue,0}, draw opacity=1, line width=1, fill={rgb,255:red,255;green,0;blue,0}, fill opacity=1] (380,710) ellipse (10 and 10);
    \node at (900,730) [opacity=1] {\textcolor[RGB]{0,0,0}{$L_1$}};
    \node at (1150,230) [opacity=1] {\textcolor[RGB]{0,0,0}{$L_2$}};
    \node at (1090,530) [opacity=1] {\textcolor[RGB]{0,0,0}{$R$}};
    \node at (80,590) [opacity=1] {\textcolor[RGB]{0,0,0}{$Z$}};
    \draw[solid, draw={rgb,255:red,0;green,0;blue,0}, draw opacity=1, line width=1, fill={rgb,255:red,255;green,0;blue,0}, fill opacity=0] (2010,440) ellipse (258.4971769034255 and 428.4971769034255);
    \draw[solid, draw={rgb,255:red,0;green,0;blue,0}, draw opacity=1, line width=1, fill={rgb,255:red,0;green,0;blue,0}, fill opacity=0] (2020,440) ellipse (447.34255586866 and 227.34255586865999);
    \draw[solid, draw={rgb,255:red,0;green,0;blue,0}, draw opacity=1, line width=1, ] (1530,440) -- (2520,440);
    \draw[solid, draw={rgb,255:red,255;green,0;blue,0}, draw opacity=1, line width=1, fill={rgb,255:red,255;green,0;blue,0}, fill opacity=1] (2330,440) ellipse (10 and 10);
    \draw[solid, draw={rgb,255:red,255;green,0;blue,0}, draw opacity=1, line width=1, fill={rgb,255:red,255;green,0;blue,0}, fill opacity=1] (2240,240) ellipse (10 and 10);
    \draw[solid, draw={rgb,255:red,255;green,0;blue,0}, draw opacity=1, line width=1, fill={rgb,255:red,255;green,0;blue,0}, fill opacity=1] (2230,440) ellipse (10 and 10);
    \draw[solid, draw={rgb,255:red,255;green,0;blue,0}, draw opacity=1, line width=1, fill={rgb,255:red,255;green,0;blue,0}, fill opacity=1] (2300,440) ellipse (10 and 10);
    \draw[solid, draw={rgb,255:red,255;green,0;blue,0}, draw opacity=1, line width=1, fill={rgb,255:red,255;green,0;blue,0}, fill opacity=1] (1780,630) ellipse (10 and 10);
    \draw[solid, draw={rgb,255:red,255;green,0;blue,0}, draw opacity=1, line width=1, fill={rgb,255:red,255;green,0;blue,0}, fill opacity=1] (2140,440) ellipse (10 and 10);
    \draw[solid, draw={rgb,255:red,255;green,0;blue,0}, draw opacity=1, line width=1, fill={rgb,255:red,255;green,0;blue,0}, fill opacity=1] (2200,440) ellipse (10 and 10);
    \draw[solid, draw={rgb,255:red,255;green,0;blue,0}, draw opacity=1, line width=1, fill={rgb,255:red,255;green,0;blue,0}, fill opacity=1] (2240,640) ellipse (10 and 10);
    \draw[solid, draw={rgb,255:red,255;green,0;blue,0}, draw opacity=1, line width=1, fill={rgb,255:red,255;green,0;blue,0}, fill opacity=1] (1780,250) ellipse (10 and 10);
    \draw[solid, draw={rgb,255:red,255;green,0;blue,0}, draw opacity=1, line width=1, fill={rgb,255:red,255;green,0;blue,0}, fill opacity=1] (2170,440) ellipse (10 and 10);
    \node at (2150,840) [opacity=1] {\textcolor[RGB]{0,0,0}{$C_1$}};
    \node at (2420,580) [opacity=1] {\textcolor[RGB]{0,0,0}{$C_2$}};
    \node at (2540,440) [opacity=1] {\textcolor[RGB]{0,0,0}{$L$}};
    \node at (2170,490) [opacity=1] {\textcolor[RGB]{0,0,0}{$Z$}};
    \end{tikzpicture}

    }

    \caption{General members of the loci $B_2^1$ and $B_2^2$ on $\PP^{2[10]}(3)$.}
    \label{fig:tenpoints}
\end{figure}

We will use reducibility in rank 1 to form reducible Brill-Noether loci in higher rank via the Serre construction.

\begin{theorem}[\cite{HL}, 5.1.1]
Let $X$ be a smooth surface, $L, M \in \Pic(X)$ be line bundles on $X$, and $Z \subseteq X$ a local complete intersection subscheme of codimension 2. Then there is a locally free extension
    \[0 \rightarrow L \rightarrow E \rightarrow M \otimes I_Z \rightarrow 0\]
(and hence the general such extension is locally free), if and only if $Z$ satisfies the \textit{Cayley-Bacharach property} with respect to $L^{\vee}MK_X$: for any $Z' \subseteq Z$ of length $\ell(Z) - 1$, the map
    \[H^0(L^{\vee}MK_X \otimes I_Z) \rightarrow H^0(L^{\vee}MK_X \otimes I_{Z'})\]
is surjective. 
\end{theorem}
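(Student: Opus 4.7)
The plan is to classify extensions by $\Ext^1(M \otimes I_Z, L)$ and use the local-to-global Ext spectral sequence to reduce local freeness of $E$ to a cohomological condition on this Ext group, which Serre duality then translates into Cayley-Bacharach. Because $Z$ has codimension two in the smooth surface $X$, one has $\mathcal{H}om(M \otimes I_Z, L) \cong L \otimes M^{-1}$, so the low-degree sequence of the spectral sequence reads
\[
H^1(L M^{-1}) \to \Ext^1(M \otimes I_Z, L) \xrightarrow{\alpha} H^0(\mathcal{E}xt^1(M \otimes I_Z, L)) \xrightarrow{\beta} H^2(L M^{-1}).
\]

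The first step is a local analysis at each $z \in Z$. Since $Z$ is a local complete intersection of codimension two, $I_{Z,z}$ admits a local Koszul resolution by a regular sequence $(f, g)$; from this resolution I identify the sheaf $\mathcal{E}xt^1(M \otimes I_Z, L)$ as an invertible sheaf on $Z$ (by local Grothendieck duality it is $L \otimes M^{-1} \otimes \det N_{Z/X}$) and verify that a local extension of the prescribed form is a free $\mathcal{O}_{X,z}$-module precisely when the corresponding local element of $\mathcal{E}xt^1$ generates the stalk. Consequently, the global extension $E$ is locally free if and only if $\alpha(e)$ is a nowhere-vanishing section of this line bundle on $Z$, so a locally free $E$ exists if and only if the image of $\alpha$ contains such a section. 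A standard genericity argument (finitely many hyperplanes cannot cover the image) reduces this to: for every $z \in Z$, the image of $\alpha$ is not contained in the hyperplane $V_z \subseteq H^0(\mathcal{E}xt^1(M \otimes I_Z, L))$ of sections vanishing at $z$.

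The remaining step is to translate this via Serre duality into Cayley-Bacharach. Serre duality on $X$ gives $H^2(L M^{-1}) \cong H^0(L^\vee M K_X)^\vee$, and the invertible-sheaf identification of $\mathcal{E}xt^1$ on the Gorenstein scheme $Z$ together with trace duality identifies $H^0(\mathcal{E}xt^1(M \otimes I_Z, L)) \cong H^0(Z, L^\vee M K_X|_Z)^\vee$, using the adjunction $\omega_Z = K_X|_Z \otimes \det N_{Z/X}$ to cancel the normal-bundle twist. Under these identifications the dual of $\beta$ is the restriction map $r \colon H^0(X, L^\vee M K_X) \to H^0(Z, L^\vee M K_X|_Z)$, so $(\text{image of }\alpha)^\perp = \ker(\beta)^\perp = \text{image}(\beta^\vee) = r(H^0(X, L^\vee M K_X))$. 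Moreover $V_z^\perp$ is the one-dimensional subspace of $H^0(Z, L^\vee M K_X|_Z)$ consisting of sections vanishing on $Z' = Z \smallsetminus \{z\}$, and the condition that the image of $\alpha$ is not contained in $V_z$ becomes $V_z^\perp \not\subseteq r(H^0(X, L^\vee M K_X))$, which unfolds to: no global section of $L^\vee M K_X$ vanishes on $Z'$ while being nonzero at $z$. This is precisely the Cayley-Bacharach surjectivity in the theorem.

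The main obstacle I anticipate is the duality bookkeeping: the local Grothendieck duality computation produces the twist by $\det N_{Z/X}$, and one must verify it cancels exactly via the adjunction $\omega_Z = K_X|_Z \otimes \det N_{Z/X}$ so that the Serre-dual line bundle on $Z$ is $L^\vee M K_X|_Z$ and the dual of $\beta$ is the geometric restriction map with no spurious twists. Once these identifications are pinned down, the equivalence with the Cayley-Bacharach condition is a direct pointwise dualization.
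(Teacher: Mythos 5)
Your outline is correct and is essentially the standard argument: the paper gives no proof of its own here but simply cites \cite{HL}, Theorem 5.1.1, and the proof there proceeds exactly as you describe --- classify extensions by $\Ext^1(M\otimes I_Z,L)$, use the local-to-global Ext sequence together with the Koszul-resolution computation showing $E$ is locally free iff the image of the class in $H^0(\mathcal{E}xt^1(M\otimes I_Z,L))$ generates that invertible $\OO_Z$-module everywhere, and then dualize $\beta$ via Serre duality and Gorenstein duality on $Z$ to recover the Cayley--Bacharach condition. The only bookkeeping worth pinning down is that for non-reduced $Z$ the subscheme $Z'$ attached to a point $z$ is the unique colength-one subscheme cut out by the (one-dimensional, since $Z$ is Gorenstein) socle at $z$, rather than literally $Z\smallsetminus\{z\}$, but this does not affect the argument.
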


Assume that $Z \subseteq \PP^2$ satisfies the Cayley-Bacharach property for $\OO_{\PP^2}(b-a-3)$. Then the general extension
    \[0 \rightarrow \OO(a) \rightarrow E \rightarrow I_Z(b) \rightarrow 0\]
is locally free. To check that the general such $E$ is slope stable, it is enough to check that there are no maps $\OO_{\PP^2}(d) \rightarrow E$ with $d > \mu(E) = (a+b)/2$. Indeed, any destabilizing subbundle $S \subseteq E$ has rank 1, and since $E$ is locally free, this inclusion induces $S^{\vee \vee} \subseteq E^{\vee \vee} \simeq E$, and $S^{\vee \vee}$ is locally free of rank 1, i.e., a line bundle. 

\begin{example} \label{rank2}
 The extensions we consider are in the above form, with $(a,b) = (0, 3)$ and $\ell(Z) = 10$:
    \[0 \rightarrow \OO \rightarrow E \rightarrow I_Z(3) \rightarrow 0.\]
Here $\chi(E) = h^0(E) = 1$ for generic choice of $Z$. Set $\vv = \ch(E)$. Note that the Cayley-Bacharach condition is trivially satisfied, as the map in question is
    \[H^0(I_Z) = 0 \rightarrow 0 = H^0(I_{Z'}).\]
Stability is also easy to check. We need to rule out the existence of maps $\OO(d) \rightarrow E$ where $d \geq 2$. Any such map induces a nonzero map to from $\OO(d)$ to either $\OO$ or $I_Z(3)$, but either such map vanishes by stability when we choose $Z$ noncollinear. Thus the general extension $E$ is both locally free and slope-stable. We construct multiple components in $B^3(\vv)$ by considering the components of $B^2(I_Z(d))$. 

The Brill-Noether locus $B_2$ on the twisted Hilbert scheme $\PP^{2[10]}(3)$ has (at least) two components: $B^2_1$ and $B^2_2$. The general member $Z \in B^2_1$ is contained in two cubics $X$ and $Y$ each of which is the union of a fixed line $L$ with a conic $C_1$ or $C_2$. The general $Z \in B^2_2$ is contained in two cubics $U$ and $V$ each of which is the union of a fixed conic $R$ with a line $L_1$ or $L_2$. See Figure \ref{fig:tenpoints}.

We consider the following families of stable sheaves $E$: construct a projective bundle $P_1$ over $B^2_1(I_Z(d))$ with fiber $\PP \Ext^1(I_Z(3), \OO)$ for $Z \in B^2_1$. It is generically a family of stable sheaves, so admits a map $P_1 \dashrightarrow M(\ch(E))$; construct $P_2 \dashrightarrow M(\vv)$ similarly. One can check that the image closures $B_1$ and $B_2$ of $P_1$ and $P_2$ have dimensions 23, above the expected dimension 22 for a component of $B^2(\vv)$. 

To conclude that $B^3(\vv)$ is reducible, we need to verify that $B_1$ and $B_2$ are not nested, and there is no component of $B_3(\vv)$ containing both. The first statement is straightforward, following from the definitions of the loci. For the second, suppose $V \subseteq B^3(\vv)$ is irreducible and contains both $B_1$ and $B_2$. Then the general $E \in V$ has three global sections, and since the general points of $B_1$ and $B_2$ have global sections $\OO \rightarrow F$ with torsion-free cokernel, so too does the general point of $V$. Thus $E$ appears in an extension
    \[0 \rightarrow \OO \rightarrow E \rightarrow I_Z(3) \rightarrow 0\]
with $\ell(Z) = 10$. Since $E \in B^3(\vv)$, $h^0(I_Z(3)) \geq 2$ so $Z$ lies in $B^2_1$ or $B^2_2$. However a general point of $V$ specializing to a general point of $B_1$ or $B_2$ determines a specialization of $B^2_1$ to $B_2^2$, or vice versa. To see this, let $\mathcal{V}/V$ be a universal family, perhaps after shrinking $V$. Then we claim there is a distinguished section $\OO_{\PP^2 \times V} \rightarrow \mathcal{V}$ whose cokernel $\mathcal{I}$ is generically a family of ideal sheaves $I_Z(3)$, fitting in an exact sequence
    \[0 \rightarrow \OO_{\PP^2 \times V} \rightarrow \mathcal{V} \rightarrow \mathcal{I} \rightarrow 0\]
which on fibers recovers the sequences defining the loci $B_1$ and $B_2$. In particular, it will follow that a specialization of a general point of $V$ to a general point of $B_1$ and to a general point of $B_2$ determines a specialization of $B^2_1$ to $B^2_2$, contradicting Proposition \ref{twist}. To see this, let $E \in B_1$ be given, and choose a section $t \in H^0(E)$ fitting in an exact sequence
    \[0 \rightarrow \OO \stackrel{t}{\rightarrow} E \rightarrow I_{Z_t}(3) \rightarrow 0\]
for $Z_t \subseteq \PP^2$ a zero-dimensional subscheme of length 10 with $I_{Z_t}(3) \in B^2_1 \subseteq \PP^{2[10]}(3)$. Then choose general $s_1, s_2 \in H^0(E)$ independent from $t$. The section $s_1$ determines another sequence
    \[0 \rightarrow \OO \stackrel{s_1}{\rightarrow} E \rightarrow I_{Z_{s_1}}(3) \rightarrow 0\]
where $Z_{s_1} \subseteq \PP^2$ is a zero-dimensional subscheme, necessarily of length 10. We claim that $Z_{s_1} \in B^2_1$ as well. To see this, consider the dependency loci $X(s_1, t)$ and $X(s_1,s_2)$ where, respectively, $s_1$ and $t$ are dependent and where $s_1$ and $s_2$ are dependent. Each is a degree 3 curve in $\PP^2$, and $Z_{s_1} \subseteq X(s_1, t) \cap X(s_1, s_2)$. However since $\ell(Z_{s_1}) = 10$, the intersection cannot be transverse, and the dependency loci share a component. Since $Z_t \in B^2_1$, there is a line in the shared loci containing $Z_{s_1}$. The claim is now proved, which completes the example.
\end{example}

In fact in higher rank many Brill-Noether loci are reducible, as we now show. The context for the following result is established in Section \ref{boundssection}. Recall that the parabola $\xi_r$ is the locus of characters $\vv \in K(\PP^2)$ satisfying $\chi(\vv) = \ch_0(\vv)$, thought of in the $(\mu, \Delta)$-plane, and we set $\xi_r(a)$ to be the intersection of $\xi_r$ with the vertical line $\mu = a$.

\begin{remark}
In \cite[Definition 3.1]{CH2} a somewhat different definition of the extremal character to $\vv$ is given. We will need to use this other definition alongside ours in what follows; we set $\vv'_{CH}$ to be the extremal character in the sense of Coskun-Huizenga in \cite{CH2}. The character $\vv'_{CH}$ is defined as in Definition \ref{extremaldef} but replacing axioms (\hyperref[D1]{D1}) and (\hyperref[D2]{D2}) as follows:
\begin{enumerate}
    \item[(D1')] $\ch_0(\vv'_{CH}) \leq \ch_0(\vv)$ and if $\ch_0(\vv'_{CH}) = \ch_0(\vv)$ then $\ch_1(\vv) - \ch_1(\vv'_{CH}) > 0$;
    \item[(D2')] $\mu(\vv'_{CH}) < \mu(\vv)$.
\end{enumerate}

It will follow from Lemma \ref{nonint} that $\mu(\vv'_{CH}) \leq \mu(\vv')$. The variant $\vv_{CH}'$ has the property that when $\ch_0(\vv)$ and $\ch_1(\vv)$ are not coprime, we still have $\mu(\vv_{CH}') < \mu(\vv)$.
\end{remark}

\begin{lemma} \label{nonint}
	If $\gcd(r,c_1)>1$ and $\frac{c_1}{r} \not\in \Z$, then the extremal character $\vv_{CH}'$ has $r(\vv_{CH}')<r(\vv)$.
\end{lemma}

\begin{proof}
	If not, then $\mu(\vv')=\frac{c_1-1}{r}$. We claim that there exists an integer $n$ such that $\frac{c_1-1}{r} < \frac{n}{r-1} < \frac{c_1}{r}$. This is equivalent to the inequalities $\frac{(c_1-1)(r-1)}{r}< n < \frac{(c_1)(r-1)}{r}$. The sequence of $r$ consecutive integers 
	\[(c_1-1)(r-1), \quad (c_1-1)(r-1)+1, \quad ..., \quad c_1(r-1)\]
	contains exactly one entry divisible by $r$. By assumption, $\frac{c_1}{r} \notin \mathbb{Z}$, so $r$ does not divide $c_1(r-1)$. It suffices to prove that $r$ does not divide $(c_1-1)(r-1)$. If $r|(c_1-1)(r-1)$, then $r|(c_1-1)$. Set $m= \gcd(c_1,r)$. Then $m$ divides $r,c_1-1$, and $c_1$, which contradicts the assumption that $m>1$.
\end{proof}

\begin{theorem} \label{comps}
Let $\vv \in K(\PP^2)$ be a stable Chern character with $\mu(\vv) > 0$. Let $\mu_{CH}'$ denote the slope of the extremal character $\vv_{CH}'$ for $\vv$. If $\mu_{CH}' > 1/3$, $\Delta(\vv) \gg 0$ and $\mu(\vv) \notin \Z$, then $B^r(\vv)$ is reducible.
\begin{enumerate}
    \item If $\Delta(\vv) \geq \xi_r(\mu(\vv))$ and $\mu_{CH}' \geq 1/3$, then $B^r(\vv)$ contains a component of the expected dimension.
    \item If $\Delta(\vv) \gg 0$ and $\mu_{CH}' > 1/3$, then $B^r(\vv)$ contains a component of dimension larger than the expected dimension.
\end{enumerate}
\end{theorem}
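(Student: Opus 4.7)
To establish reducibility of $B^{r}(\vv)$, I will exhibit two distinct irreducible loci: an \emph{extremal-extension locus} $X_{\mathrm{ext}}$ arising from the extremal decomposition of $\vv$, which will have strictly larger-than-expected dimension, and (under $\Delta(\vv) \geq \xi_{r}(\mu(\vv))$) an \emph{evaluation-type locus} $X_{\mathrm{ev}}$ of expected dimension. Their distinctness will prove reducibility and yield the two dimension-type assertions.

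For $X_{\mathrm{ext}}$, let $\vv' = \vv'_{CH}$ and $\vv'' = \vv - \vv'$; for $\Delta(\vv) \gg 0$, $\vv''$ is stable, and since $\mu_{CH}' > 1/3$ we have $\mu(\vv'') > \mu(\vv) > \mu(\vv') > 1/3$. Applying Theorem \ref{nonempty} to $\vv'$ and $\vv''$, choose $E' \in B^{r'}(\vv')$ and $E'' \in B^{r''}(\vv'')$ with $r' + r'' = r$. Form the extension parametrization
    \[\phi: \PP \Ext^{1}(E'', E') \dashrightarrow M(\vv)\]
over $B^{r'}(\vv') \times B^{r''}(\vv'')$; stability of the general extension follows from Proposition \ref{extremal}. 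Restrict to the projectivized kernel $\PP K$ of the linear map $\Ext^{1}(E'', E') \to \Hom(H^{0}(E''), H^{1}(E'))$; for such extension classes the connecting homomorphism vanishes, so $h^{0}(E) \geq h^{0}(E') + h^{0}(E'') \geq r$. Nonemptiness of $\PP K$ is guaranteed by the inequality $\ext^{1}(E'', E') - \hom(H^{0}(E''), H^{1}(E')) \geq c_{1}(\vv')\,c_{1}(\vv'') > 0$, proved exactly as in the dimension count in Theorem \ref{nonempty}. Taking the closure of the image yields $X_{\mathrm{ext}} \subseteq B^{r}(\vv)$; a dimension count combining $\dim B^{r'}(\vv')$, $\dim B^{r''}(\vv'')$, $\dim \PP K$, and the fiber dimension of $\phi$ shows that $\dim X_{\mathrm{ext}} > \expdim B^{r}(\vv)$ once $\Delta(\vv) \gg 0$, since the surplus in $\Delta(\vv)$ is absorbed asymmetrically by $\vv''$.

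For $X_{\mathrm{ev}}$, assume $\Delta(\vv) \geq \xi_{r}(\mu(\vv))$ and $\mu_{CH}' \geq 1/3$. Let $\vv_{0}$ denote the integral character of the same slope and rank as $\vv$ lying on $\xi_{r}$; under $\mu_{CH}' \geq 1/3$ the point $(\mu(\vv), \xi_{r}(\mu(\vv)))$ lies on or above the Dr\'ezet--Le Potier curve, so $\vv_{0}$ is stable. Since $\chi(\vv_{0}) = r$, the general $E_{0} \in M(\vv_{0})$ has $h^{0}(E_{0}) = r$ spanning a full-rank subsheaf whose dependency locus is a curve $D \subseteq \PP^{2}$ of degree $c_{1}(\vv)$. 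Adapting the evaluation parametrization of Theorem \ref{depth} (or equivalently constructing $E$ as an elementary modification $E = \ker(E_{0} \twoheadrightarrow \OO_{Z})$ with $Z \subseteq D$ of length $k = r(\Delta(\vv) - \xi_{r}(\mu(\vv)))$) yields a locus $X_{\mathrm{ev}}$ of slope-stable sheaves of character $\vv$ with $h^{0}(E) \geq r$, whose parameter count matches the expected dimension. Distinctness of $X_{\mathrm{ext}}$ and $X_{\mathrm{ev}}$ then follows because a general $E \in X_{\mathrm{ext}}$ admits a saturated subsheaf of slope $\mu(\vv') < \mu(\vv)$, while a general $E \in X_{\mathrm{ev}}$ is slope-stable with reflexive hull $E^{\vee\vee} \in M(\vv_{0})$ of discriminant strictly below $\Delta(\vv)$.

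The main obstacle will be the quantitative dimension comparison for $X_{\mathrm{ext}}$: one must precisely identify the generic fiber dimension of $\phi$ (governed by $\Hom$-groups between $E'$ and $E$) and verify that the surplus from $\Delta(\vv) \gg 0$ genuinely pushes $\dim X_{\mathrm{ext}}$ strictly above $\expdim B^{r}(\vv)$; this is a quantitative strengthening of the Riemann--Roch bookkeeping from Theorem \ref{nonempty}. A secondary difficulty is ensuring $X_{\mathrm{ev}}$ attains the expected dimension rather than falling short, since a naive elementary-modification count gives a smaller value; remedying this requires either enlarging the construction to include higher Brill--Noether strata of $M(\vv_{0})$ (where $h^{0}(E_{0}) > r$) or invoking the full evaluation parametrization along a relative Picard scheme in the style of the proof of Theorem \ref{depth}.
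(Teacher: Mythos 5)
Your overall architecture matches the paper's: an evaluation-type locus of expected dimension (extensions $0 \to \OO^r \to E \to \OO_C(D) \to 0$ over a relative Picard scheme) and an extremal-extension locus of larger dimension. But there is a genuine gap in the dimension count for $X_{\mathrm{ext}}$, and it is the crux of the theorem. If you take $E' \in B^{r'}(\vv')$ and $E'' \in B^{r''}(\vv'')$, then writing $\Delta(\vv'') = \Delta_0 + k/r''$, the locus $B^{r''}(\vv'')$ contributes roughly $r''k$, and $\ext^1(E'',E') = -\chi(E'',E')$ contributes roughly $r'k$, for a total of $rk + O(1)$ --- exactly the same leading order as the expected dimension of $B^r(\vv)$ and as $\dim X_{\mathrm{ev}}$. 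The surplus in $\Delta(\vv)$ is \emph{not} ``absorbed asymmetrically'' in your setup; the leading coefficients cancel. The paper's key observation is that because $\vv'$ has minimal discriminant and $\mu(\vv') > 1/3$, it lies strictly below the parabola $\xi_r$, so the \emph{general} $E' \in M(\vv')$ already has $h^0(E') = \chi(E') = r' + \epsilon'$ with $\epsilon' > 0$ and $h^1(E') = 0$ (so no connecting-homomorphism argument is needed at all). One then only requires $E'' \in B^{r''-\epsilon}(\vv'')$ with $\epsilon = \min\{\epsilon', r''\} > 0$, a strictly larger Brill--Noether locus whose dimension grows like $(r''+\epsilon)k$; this is what produces $\dim Z_2 \geq (r+\epsilon)k + O(1) > rk + O(1)$. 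Without this $\epsilon$-trick your construction neither exhibits a component of unexpected dimension nor separates the two loci, so reducibility does not follow.

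Two further points. First, for $X_{\mathrm{ev}}$ you assert slope-stability of the general extension $0 \to \OO^r \to E \to \OO_C(D) \to 0$ without justification; Lemma \ref{corank0lemma}(3) only covers $r \geq p(c_1)$, i.e.\ $\mu \leq 1/3$, whereas here $\mu_{CH}' > 1/3$ forces $\mu(\vv) > 1/3$. The paper needs a separate argument (completeness of the family, genericity in the irreducible stack of prioritary sheaves for $d = g-1$, then elementary modifications for $d < g-1$) to get stability, and also shows $X_{\mathrm{ev}}$ is an honest irreducible \emph{component} by pulling back the open locus of smooth curves under the map to $\PP H^0(\OO(c_1))$ --- this is what lets the dimension comparison conclude reducibility. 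Second, your distinctness argument is not sound as stated: every sheaf of rank $\geq 2$, slope-stable or not, admits saturated subsheaves of slope less than $\mu(\vv)$, so the existence of a subsheaf of slope $\mu(\vv') < \mu(\vv)$ does not distinguish the general member of $X_{\mathrm{ext}}$ from a slope-stable sheaf. You also leave untreated the case $\gcd(r, c_1) > 1$, where the extremal character in the sense of Definition \ref{extremaldef} may have the same slope as $\vv$ and one must pass to $\vv'_{CH}$, as well as the bound (uniform in $k$) on the fiber dimension of the extension parametrization, which the paper obtains via Quot schemes and Grauert--M\"ulich.
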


Note that the assumptions in the theorem are implied by the assumptions in Theorem \ref{main} (4).

We first construct the families of sheaves necessary for the proof and study their properties. Our first family will consist of sheaves $E$ appearing in extensions determined by $r(E)$ global sections of $E$, and our second family will consist of sheaves $F$ appearing in extension classes determined by the extremal subsheaves $E'$. 

First, let $\Pic_{\mathcal{C}/U}^d$ be the relative Picard scheme over the universal curve $\mathcal{C} \rightarrow U$, where $U \subseteq \PP H^0\OO_{\PP^2}(c_1)$ is the space of smooth plane curves of degree $c_1$. We set $g = \frac{1}{2}(c_1-1)(c_1-2)$. Consider the projective bundle $Y$ whose fiber over $\OO_C(D)$ is $\PP\Ext^1(\OO_C(D), \OO^r)$ defined over the open subset of $\Pic_{\mathcal{C}/U}^d$ where the dimension of this group is constant. We form a family of sheaves $\mathcal{E}$ on $\PP^2$ parametrized by $Y$ by setting $E_y$, $y \in Y$, to be an extension sheaf
    \[0 \rightarrow \OO^r \rightarrow E_y \rightarrow \OO_C(D) \rightarrow 0\]
determined by $y = (\OO_C(D), e)$ for $\OO_C(D) \in \Pic^d_{\mathcal{C}/U}$ and $e \in \PP \Ext^1(\OO_C(D), \OO^r)$.

Recall that a sheaf $E$ on $\PP^2$ is called \textit{prioritary} if $\Ext^2(E, E(-1)) = 0$. 

\begin{lemma} \label{prioritary}
Let $\mathcal{E}$ be the family of sheaves just constructed. Then for $d = g - 1$,
\begin{enumerate}
    \item \label{lone} $\mathcal{E}$ is complete;
    \item \label{ltwo} the general member of $\mathcal{E}$ is prioritary. 
\end{enumerate}
When $c_{1}/r\geq 1/3$, the general member is stable.
\end{lemma}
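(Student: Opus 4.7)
The plan is to prove (\ref{ltwo}) by a cohomology chase, (\ref{lone}) by matching the dimension of $Y$ with $\dim M(\vv)$ plus the dimension of $PGL_r$-orbits, and then deduce the stability statement from the fact that $M(\vv)$ is non-empty and non-exceptional when $c_1/r \geq 1/3$, so that the general stable sheaf is already a member of $\mathcal{E}$.

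For (\ref{ltwo}), apply $\Hom(-, E(-1))$ to the defining extension
    \[0 \to \OO^r \to E \to \OO_C(D) \to 0\]
and show the two outer $\Ext^2$ terms vanish. The first, $\Ext^2(\OO^r, E(-1)) = H^2(E(-1))^r$, vanishes by taking cohomology of the sequence above twisted by $\OO(-1)$: $H^2(\OO(-1)) = 0$ is standard, and $H^2(\OO_C(D-H)) = 0$ because $\OO_C(D-H)$ is supported on a curve. For the second, Serre duality identifies $\Ext^2(\OO_C(D), E(-1))$ with $\Hom(E, \OO_C(D-2H))^\vee$; applying $\Hom(-, \OO_C(D-2H))$ to the defining sequence squeezes this between $\Hom(\OO_C(D), \OO_C(D-2H)) = H^0(\OO_C(-2H)) = 0$ and $H^0(\OO_C(D-2H))^r$. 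For $d = g-1$, Riemann--Roch on $C$ gives $\chi(\OO_C(D-2H)) = -2c_1 < 0$, and for a general line bundle of degree $g-1$ the twist by $-2H$ has no sections, giving the vanishing.

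For (\ref{lone}), the vanishings above yield $\ext^1(\OO_C(D), \OO^r) = -\chi(\OO_C(D), \OO^r) = 3c_1 r$, and hence
    \[\dim Y = \dim \Pic^{g-1}_{\mathcal{C}/U} + 3c_1 r - 1 = c_1^2 + 3c_1 r,\]
using $\dim \Pic^{g-1}_{\mathcal{C}/U} = c_1^2 + 1$ as computed in the proof of Theorem \ref{depth}. Since $\dim M(\vv) = c_1^2 + 3c_1 r - r^2 + 1$, the excess $r^2 - 1$ matches the dimension of the generic $PGL_r$-orbit on $Y$ coming from automorphisms of $\OO^r$ that preserve the extension sheaf up to isomorphism. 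Dominance of the classifying map $Y \dashrightarrow M(\vv)$ then reduces to showing that a general $E \in M(\vv)$ has this form: since $\chi(\vv) = r$, Theorem \ref{GH} gives $h^0(E) = r$; by slope-stability the evaluation map $\OO^r \to E$ has full rank (any smaller image would destabilize $E$) and is therefore injective, its kernel being torsion in $\OO^r$; its cokernel is pure one-dimensional (by the argument of Lemma \ref{corank0lemma}(\ref{one}), which needed only semistability of $E$), and for general $E$ is of the form $\OO_C(D)$ on a smooth curve $C$ of degree $c_1$ by the argument of Lemma \ref{corank0lemma}(\ref{two}); the Chern character then forces $\deg D = g - 1$.

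For the final claim, the hypothesis $c_1/r \geq 1/3$ gives $\Delta(\vv) - p(-\mu(\vv)) = 3\mu - 1 \geq 0$, placing $\vv$ on or above the $^\perp\OO$-branch of the Dr\'ezet--Le Potier curve; together with the type of comparison used in the second case of Theorem \ref{nonempty} this gives $\Delta(\vv) \geq \delta(\mu(\vv))$, so $\vv$ is stable. Moreover $\Delta(\vv) = \mu(\mu+3)/2 \geq 5/9 > 1/2$, so $\vv$ is non-exceptional and the open locus of stable sheaves is dense in $M(\vv)$; since $\mathcal{E}$ dominates $M(\vv)$ by (\ref{lone}), the general member is stable. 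The main obstacle is the identification in (\ref{lone}) of general $E \in M(\vv)$ as an extension of the desired form: the explicit hypothesis $r \geq p(c_1)$ of Lemma \ref{corank0lemma}(\ref{two}) does not hold here, and one must instead invoke slope-stability directly to force full rank and injectivity of the evaluation map, and a genericity argument (smoothness of the support of the cokernel is an open condition which is non-empty because the family $\mathcal{E}$ has already been constructed over the smooth-curve locus) to deduce the cokernel is a line bundle on a smooth curve.
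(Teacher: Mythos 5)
Your proof of part (\ref{ltwo}) is fine and is in fact a slightly cleaner variant of the paper's: you apply $\Hom(-,E(-1))$ to the defining sequence rather than $\Hom(E,-)$ to its twist, and your identification of $\Ext^2(\OO_C(D),E(-1))$ with $\Hom(E,\OO_C(D-2H))^\vee$ lets you avoid the torsion-freeness digression the paper needs to kill $\Hom(\OO_C(D-H),E(-3))$. The problems are in (\ref{lone}) and the stability claim. First, you have misread what ``complete'' means here: it is the infinitesimal condition that the Kodaira--Spencer map $T_yY\to\Ext^1(E_y,E_y)$ is surjective, not that $Y$ dominates $M(\vv)$. The paper proves this by a diagram chase whose key input is $h^0(\OO_C(K_C-D))=0$ for general $D$ of degree $g-1$ (this is exactly where $d=g-1$ enters), and then deduces stability by mapping $\mathcal{E}$ to the irreducible stack of prioritary sheaves: completeness forces the general member of $\mathcal{E}$ to be a general prioritary sheaf, which is stable when $\vv$ lies on or above the Dr\'ezet--Le Potier curve. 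A dimension count showing $\dim Y=\dim M(\vv)+\dim PGL_r$ does not establish surjectivity of the Kodaira--Spencer map, and without completeness in this sense your route to stability collapses, because the prioritary-stack argument is precisely the device that lets one avoid analyzing the general \emph{stable} sheaf directly.

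Second, your substitute argument --- that a general $E\in M(\vv)$ is itself an extension of the desired form, so the classifying map is dominant --- has two unjustified steps. The claim that slope-stability forces the evaluation map $\OO^r\to E$ to have full rank is false as stated: if $F\subseteq E$ is the image with $r(F)=s<r$, slope-semistability only gives $\mu(F)\le\mu(E)$, which is compatible with $h^0(F)\ge r$ whenever $p(c_1(F))\ge r$ (e.g.\ $s=r-1$, $c_1(F)=c_1-1$ is not excluded when $\mu\le 1$). Every full-rank argument in the paper (Theorem \ref{pbound} Case I, Lemma \ref{corank0lemma}) runs an induction through the Harder--Narasimhan factors of $F$ that crucially uses $r\ge p(c_1)$, which you correctly note fails here; you have not supplied a replacement. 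And your genericity argument for smoothness of the support of the cokernel is circular: the locus of stable sheaves with smooth-support cokernel is nonempty ``because the family $\mathcal{E}$ has already been constructed'' only if some member of $\mathcal{E}$ is already known to be stable, which is the statement under proof. To repair the argument you should prove completeness as the paper does (the only computation needed is $\deg(K_C-D)=g-1$, so a general $D$ kills $\Ext^1(\OO^r,\OO_C(D))\cong H^1(C,\OO_C(D))^r$ and the snake lemma gives surjectivity of $\kappa$) and then invoke the prioritary-stack argument for stability.
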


\begin{proof}
(\ref{lone}) First, we have a natural identification
    \[T_{\OO_C(D)} \Pic^d_{\mathcal{C}/U} \simeq \Ext^1(\OO_C(D), \OO_C(D)).\] 
Let $T_y$ be the tangent space to $Y$ at $y \in Y$, and let $E = E_y$ be the associated sheaf. The Kodaira-Spencer map $\kappa: T_y \rightarrow \Ext^1(E,E)$ fits into the following diagram:
    \begin{center}
        \begin{tikzcd}
            0 \arrow[r] & \Ext^1(\OO_C(D), \OO^r) \arrow[r] \arrow[d] & T_y \arrow[r] \arrow[d, swap, "\kappa"] & \Ext^1(\OO_C(D), \OO_C(D)) \arrow[r] \arrow[d] & 0 \\
             & \Ext^1(E, \OO^r) \arrow[r] & \Ext^1(E,E) \arrow[r] & \Ext^1(E, \OO_C(D)) & 
        \end{tikzcd}
    \end{center}
The left-hand vertical map is obtained by applying $\Hom(-, \OO^r)$ to the sequence
    \[0 \rightarrow \OO^r \rightarrow E \rightarrow \OO_C(D) \rightarrow 0.\]
Since $\Ext^1(\OO^r, \OO^r) = 0$, it is surjective. The right-hand vertical map is obtained by applying $\Hom(-, \OO_C(D))$ to the same sequence. The next term in the sequence is
    \[\Ext^1(\OO^r, \OO_C(D)) \simeq H^1(C, \OO_C(D))^r\]
whose dimension is
    \[\ext^1(\OO^r, \OO_C(D)) = r \cdot h^1(C, \OO_C(D)) = r \cdot h^0(C, \OO_C(K_C - D).\]
We have
    \[\deg(K_C - D) = 2g-2 - d = g-1.\]
For general $D$, Riemann-Roch on $C$ then gives $h^0(\OO_C(K_C-D)) = 0$, so the right-hand vertical map in the diagram is also surjective. The snake lemma implies that $\kappa$ is surjective as well, i.e., $\mathcal{E}$ is complete.

(\ref{ltwo}) Apply $\Hom(E, -)$ to the sequence
    \[0 \rightarrow \OO(-1)^r \rightarrow E(-1) \rightarrow \OO_C(D-H) \rightarrow 0\]
to get a sequence
    \begin{equation} \label{prior}
        \Ext^2(E, \OO(-1)^r) \rightarrow \Ext^2(E, E(-1)) \rightarrow \Ext^2(E, \OO_C(D-H)) \rightarrow 0.
    \end{equation}
To show general $E$ is prioritary, it is enough to show that the outer two terms generically vanish. 

The first term is Serre dual to $\Hom(\OO(-1), E(-3))^r \simeq H^0(E(-2))^r$. This space sits in an exact sequence
    \[0 \rightarrow H^0(\OO(-2)^r) \rightarrow H^0(E(-2)) \rightarrow H^0(\OO_C(D-2H)) \rightarrow 0.\]
The first term vanishes, and as above for general $D$, $h^0(\OO_C(D-2H)) \leq h^0(\OO_C(D)) = 0$. We conclude the first term in the sequence (\ref{prior}) vanishes.

The last term in sequence (\ref{prior}) is Serre dual to $\Hom(\OO_C(D-H), E(-3))$. To show this vanishes, it is enough to show that generic $E$ appearing in $\mathcal{E}$ is torsion-free. If $T \subseteq E$ is the torsion part of $E$, write $E' = E/T$. The induced map $T \rightarrow \OO_C(D)$ is nonzero, so we obtain a diagram
    \begin{center}
        \begin{tikzcd}
        & 0 \arrow{d} & 0 \arrow[d] & \\
        K \arrow{r} & T \arrow{r} \arrow{d} & \mathcal{O}_{C}(D) \arrow{r} \arrow[d, equals] & Q \\
        \mathcal{O}^{r}  \arrow{r}  & E \arrow{r} \arrow{d} & \mathcal{O}_{C}(D) \arrow{r}\arrow{d} & 0 \\
        E' \arrow[r, equals]  & E'\arrow{r} \arrow{d} & 0 \arrow{r}\arrow{d} & 0 \\
        & 0 & 0 &        
        \end{tikzcd}
    \end{center}
The snake lemma produces an exact sequence
    \[0 \rightarrow K \rightarrow \OO^r \rightarrow E' \rightarrow Q \rightarrow 0.\]
Since $\OO^r$ is torsion-free, $K = 0$. Arguing in similar fashion to the proof of Lemma \ref{corank0lemma} (\ref{one}), we see that the inclusion $T \hookrightarrow \OO_C(D)$ cannot have a nonzero cokernel, which would be supported in codimension two. Thus $T \simeq \OO_C(D)$, but then the map $T \rightarrow E$ splits the sequence defining $E$. This contradiction implies that $E$ is torsion-free, as required. We conclude that the general member of the family $\mathcal{E}$ is prioritary.

Now $\vv(E)$ lies on the curve $\xi_{r}$. When $\mu(E)\geq 1/3$, $\vv(E)$ lies on or above the Dr\'ezet-Le Potier curve. To conclude that the general member of $\mathcal{E}$ is also stable, one proceeds in entirely the same way as in \cite[\S 16.2]{LP}. Specifically, the stack of prioritary sheaves is irreducible and its general member is stable. The family $\mathcal{E}$ induces a map to the stack of prioritary sheaves, and completeness implies the general member of $\mathcal{E}$ is a general member of the stack of prioritary sheaves, so it is stable.
\end{proof}

\begin{corollary} \label{Z1}
A general member of $\mathcal{E}$ is slope-stable for $d\leq g-1$.
\end{corollary}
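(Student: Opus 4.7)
The plan is to prove this by descending induction on $d$, using Lemma \ref{prioritary} as the base case $d = g-1$ and passing to smaller $d$ via elementary modifications. Since the parameter space $Y$ for $\mathcal{E}_d$ is a projective bundle over the irreducible relative Picard scheme $\Pic^d_{\mathcal{C}/U}$, it is itself irreducible, and slope-stability is an open condition in flat families of coherent sheaves; it therefore suffices to exhibit a single slope-stable member of $\mathcal{E}_d$ for each $d \leq g-1$.

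The key construction is the following. Given $E \in \mathcal{E}_d$ with presentation
\[0 \to \OO^r \to E \to \OO_C(D) \to 0,\]
and a point $p \in C$ in the support of $\OO_C(D)$, composing $E \twoheadrightarrow \OO_C(D)$ with the residue map $\OO_C(D) \twoheadrightarrow \OO_p$ gives a surjection $E \twoheadrightarrow \OO_p$. Its kernel $E'$ fits in $0 \to E' \to E \to \OO_p \to 0$, and the snake lemma applied to the resulting commutative diagram yields
\[0 \to \OO^r \to E' \to \OO_C(D - p) \to 0,\]
exhibiting $E' \in \mathcal{E}_{d-1}$. By Remark \ref{jumpbyr}, if $E$ is slope-stable, then so is $E'$, since the elementary modification preserves the slope and rank and only raises the discriminant by $1/r$.

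Lemma \ref{prioritary} supplies a slope-stable (indeed, stable) member of $\mathcal{E}_{g-1}$ in the cases of interest, where $c_1/r \geq 1/3$. Iterating the elementary modification construction $g-1-d$ times then produces a slope-stable member of $\mathcal{E}_d$ for each $d \leq g-1$, and openness of slope-stability together with the irreducibility of $Y$ implies that the general member of $\mathcal{E}_d$ is slope-stable. The main mild subtlety is checking that each successive elementary modification lands in the open locus over which $\mathcal{E}_{d-1}$ is defined, namely where $\ext^1(\OO_C(D - p), \OO^r)$ attains its generic constant value; but by openness of this condition and of slope-stability we may perturb the chosen sheaf inside the slope-stable locus if needed to ensure the resulting $E'$ lies in the good locus.
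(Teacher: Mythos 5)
Your overall strategy is exactly the paper's: establish the case $d = g-1$ first and then descend to smaller $d$ by elementary modifications, using that elementary modifications preserve slope-stability (Remark \ref{jumpbyr}) and that slope-stability is open on the irreducible parameter space $Y$. Your explicit snake-lemma verification that the elementary modification at a point $p \in C$ of a member of $\mathcal{E}_d$ lands in $\mathcal{E}_{d-1}$ (via $0 \to \OO^r \to E' \to \OO_C(D-p) \to 0$) is a detail the paper leaves implicit, and it is correct.

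There is, however, a gap in your base case. You write that Lemma \ref{prioritary} supplies a ``slope-stable (indeed, stable)'' member of $\mathcal{E}_{g-1}$, but the lemma only asserts Gieseker stability, and the implication runs the other way: slope-stable $\Rightarrow$ stable $\Rightarrow$ semistable $\Rightarrow$ slope-semistable. A Gieseker-stable sheaf need not be slope-stable (there is no coprimality assumption on $r$ and $c_1$ here to force this), and the distinction is precisely the one that matters for the inductive step, since Remark \ref{jumpbyr} notes that elementary modifications preserve slope-stability but \emph{not} stability. The missing ingredient is the one the paper uses: by completeness of $\mathcal{E}_{g-1}$ (Lemma \ref{prioritary}(\ref{lone})) and irreducibility of the stack of prioritary sheaves, the general member of $\mathcal{E}_{g-1}$ is a \emph{general} member of the corresponding moduli space, and the general member of that moduli space is slope-stable by \cite[Corollaire 4.12]{DLP}. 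With that substitution your argument goes through.
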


\begin{proof} 
It suffices to exhibit one slope-stable member. By the lemma, for $d=g-1$ a general member of $\mathcal{E}$ is a general member of $M(r,\frac{g-1}{r},\xi_r(\frac{g-1}{r}))$. Because the general member is slope-stable (\cite[Corollaire 4.12]{DLP}), any elementary modification of a general member of $\mathcal{E}$ is again slope-stable.
\end{proof}

It follows that the family $\mathcal{E}$ determines a map $Y \dashrightarrow M(\vv)$, and we set $Z_1 \subseteq M(\vv)$ to be the closure of its image.

For the second family, we first assume $\gcd(r,c_1)=1$.  Let $\vv'$ be the extremal character associated to $\vv$ in the sense of Definition \ref{extremaldef}. Because $\mu(\vv'_{CH}) \leq \mu(\vv')$, we have $\mu(\vv') > 1/3$. Setting $\vv'' = \vv - \vv'$ and letting $\Delta(\vv) \gg 0$, we may choose stable $E'' \in M(\vv'')$ and semistable $E' \in M(\vv')$ and form extensions
    \[0 \rightarrow E' \rightarrow E \rightarrow E'' \rightarrow 0.\]
Then $\ch(E) = \vv$ and $E$ is generically stable (see Proposition \ref{extremal}). When $r(E)$ and $c_1(E)$ are coprime, $\mu(\vv')<\mu(\vv)$, and $\vv'$ has minimal discriminant. If $r(E)$ and $c_1(E)$ are not coprime, our construction will differ slightly. Because $\mu(\vv') > 1/3$, we have $\Delta(\vv') < \xi_r(\mu(\vv'))$, i.e., $\chi(E') > r'$ (see the proof of Theorem \ref{nonempty}). Write $\chi(E') = r' + \epsilon'$, and let $\epsilon:=\min\{\epsilon', r''\}$. By Definition \ref{extremaldef}, $r''>0$, hence $\epsilon>0$. We want to examine the Brill-Noether locus $B^{r'' - \epsilon}(\vv'')$, i.e., the locus of sheaves $E'' \in M(\vv'')$ with $h^0(E'') \geq r'' - \epsilon$. Our second family will consist of extension sheaves $E$ as above, where $E'$ has general cohomology $h^0(E') = r' + \epsilon'$ and $E''$ has potentially special cohomology $h^0(E'') = r'' - \epsilon$. If we show that the appropriate Brill-Noether locus is nonempty, we may construct a projective bundle $\PP$ over $M(\vv') \times B^{r''-\epsilon}(\vv'')$ supporting the family of sheaves $E$ as above. Then the family determines a map $\PP \dashrightarrow M(\vv)$; let $Z_2$ be the closure of the image. 

\begin{proof}[Proof of Theorem \ref{comps}]
We have as above rational maps $Y \dashrightarrow Z_1$ and $\PP \dashrightarrow Z_2$ into $M(\vv)$. We will show that $Z_2$ is well-defined (i.e., the Brill-Noether locus $B^{r''-\epsilon}(\vv'')$ is nonempty), that $Z_1$ is an irreducible component of $B^r(\vv)$, and $\dim(Z_2) > \dim(Z_1)$.

We now consider the first family of sheaves. By construction, the map $Y \dashrightarrow Z_1$ has fiber $\Aut(\OO^r)$. Recall that we set $\Delta = \Delta_0 + k/r$, where $\Delta_0$ is minimal and $k \geq 0$. We have
    \begin{align*}
        \dim Z_1 &= \dim Y - \dim \Aut(\OO^r) \\
        &= \dim \Pic^d_{\mathcal{C}/U} + r \cdot \ext^1(\OO_C(D), \OO) - r^2 \\
        &= c_1^2 + 1 + r(3c_1 + k) - r^2 \\
        &= rk + (\text{const}).
    \end{align*}

Let $V \subseteq B^r(\vv)$ be an irreducible subvariety containing $Z_1$, and let $E \in Z_1$ be general. Because $h^0(E) = r$ and the evalutation map $\ev_E$ for $E$ has full rank, we can shrink $V$ to assume that each member of $V$ does as well. In particular, the map $\psi$ constructed in the proof of Theorem \ref{depth} carries $E$ to a smooth curve. It follows that the general point of $V$ is as well, and the inverse image $\psi|_V^{-1}(U) \subseteq V$ is an open dense subset; in particular, $V \subseteq Z_1$. We conclude finally that $Z_1$ is an irreducible component of $B^r(\vv)$.

We now consider the second family of sheaves. We first assume that $\gcd(r(E), c_1(E)) = 1$. By Definition \ref{extremaldef}, we have $r'' > 0$. We conclude from Theorem \ref{nonempty} that the Brill-Noether locus $B^{r''}(\vv'')$ is nonempty, so in particular the larger Brill-Noether locus $B^{r''-\epsilon}(\vv'')$ is as well. If $r'=1$, then $E'$ is a line bundle since it has minimal discriminant, and $H^1(E')=0$. If $r' \geq 2$, by Theorem \ref{GH}, $H^1(E)=0$ since by construction $E'$ is chosen to be general. So for generic $E$ as in the construction of $Z_2$ we have 
    \[h^0(E) \geq (r'+\epsilon')+(r''-\epsilon)\geq r(E),\]
as desired.

We compute the dimension of $Z_2$. As usual, write $\Delta(\vv'') = \Delta_0 + k/r''$, where $\Delta_0$ is the minimal discriminant of a stable bundle of slope $\mu(\vv'')$ and rank $r''$. The expected dimension of $B^{r''-\epsilon}(\vv'')$ is
    \begin{align*}
        \expdim B^{r''-\epsilon}(\vv'') &= \dim M(\vv'') - \expcodim B^{r''-\epsilon}(\vv'') \\
        &= r''^2(2\Delta(\vv'') - 1) + 1 - (r'' - \epsilon)(r'' - \epsilon - \chi(\vv''))\\
        &= 2r''k  - (r''-\epsilon)k + (\text{const}_1)
    \end{align*} 
where (const$_1$) is independent of $k$. The dimension of $\PP$ is given by
    \begin{equation} \label{eq:Pdim} 
    \begin{split}
        \dim \PP &= \dim B^{r''-\epsilon}(\vv'') + \dim M(\vv') + \ext^1(E'', E') - 1 \\
        &\geq 2r''k - (r''-\epsilon)k + (\text{const}_1) + r'^2(2\Delta(\vv') -1) \\
        &= (r+\epsilon)k + (\text{const}_2).
    \end{split}
    \end{equation}
where again (const$_2$) is independent of $k$.

We claim that the fiber dimension of the map $\PP \rightarrow Z_2$ is bounded independent of $k$; for $E \in Z_2$ let $\PP_E$ denote the fiber over $E$. To show this we consider the forgetful maps $\PP_E \rightarrow \Quot(E, \vv'')$ to the Quot scheme that assigns to an extension class $0 \rightarrow E' \rightarrow E \rightarrow E'' \rightarrow 0$ the quotient $E \rightarrow E''$. The fiber of the forgetful map has dimension bounded by
    \[\dim \Aut(E') \leq \hom(E', E') \leq s^2\]
where the Jordan-H\"older filtration of $E'$ has length $s$. In particular it is independent of $k$, so it is enough to bound the dimension of the Quot scheme. There is a canonical identification $T_{[E \rightarrow E'']}\Quot(E, \vv'') \simeq \Hom(E', E'')$ and its dimension bounds the dimension of $\Quot(E,\vv'')$. Now consider a general line $L \subseteq \PP^2$ in the locally free loci for $E'$ and $E''$ and the restriction 
    \[0 \rightarrow E''(-1) \rightarrow E'' \rightarrow E''|_L \rightarrow 0\]
of $E''$ to $L$. Then $\Hom(E', E''(-1)) = 0$, and by the Grauert-M\"ulich Theorem (\cite[Theorem 3.2.1]{HL}), $\hom(E', E''|_L) = h^0(E'^{\vee} \otimes E''|_L)$ is bounded independent of $\Delta(E'')$, hence independent of $k$, and the claim follows. 

From the dimension count above, we see that
    \begin{equation} \label{eq:Z2dim}
        \dim Z_2 \geq \dim \PP - \dim \PP_E = (r + \epsilon)k + (\text{const}).
    \end{equation}

When the rank and degree fail to be coprime, let $\vv_{CH}'$ be the extremal character of $\vv$. Since $\Delta(\vv) \gg 0$, an extremal decomposition of $\vv$ exists. Denote the quotient character by $\vv_{CH}''$. Now by Lemma \ref{nonint}, $r(\vv_{CH}'')>0$. Since $\mu(\vv_{CH}') > 1/3$, $\mu(\vv_{CH}'') > 1/3$. Since $\vv_{CH}'$ has minimal discriminant, $\Delta(\vv_{CH}') \gg 0$, so Corollary \ref{Z1} implies that the general member of $Z_1(\vv_{CH}'')$ is slope-stable. The character $\vv_{CH}'$ is below $\xi_r$ (see the proof of Theorem \ref{nonempty}), so we may write $\chi(\vv_{CH}')=r'+\epsilon'$. Set $\epsilon=\min \{\epsilon', r''\}>0$. We have $Z_1(\vv_{CH}'') \subseteq B^{r''}(\vv_{CH}'')\subseteq B^{r''-\epsilon}(\vv_{CH}'')$. Let $B''$ be the component of $B^{r''-\epsilon}(\vv_{CH}'')$ that contains $Z_1(\vv_{CH}'')$. Then a general member of $B''$ is slope-stable, and since $B^{r''-\epsilon}$ is a determinantal variety, $\dim B'' \geq (r''+\epsilon)k + \text{const}''$, where const$''$ is independent of $k$ and $\Delta(\vv)= \Delta_{\min}+k/r$. By precisely the same argument as in the proof of \cite[Theorem 6.4]{CH2}, a general extension of the form 
    \[0 \rightarrow E' \rightarrow E \rightarrow E'' \rightarrow 0\]
is stable, where $E'' \in B''$ and $E' \in M(\vv')$ are general members. In particular, $h^0([E])$ is well-defined for such $[E]\in M(\vv)$ (since there is no other sheaf that is $S$-equivalent to $E$). If $r(\vv_{CH}')=1$, since $\vv_{CH}'$ has minimal discriminant, $E'$ is a line bundle and $h^1(E')=0$. If $r(\vv_{CH}')>1$, since $E'$ is general in $M(\vv')$, by Theorem \ref{GH}, $h^{0}(E')=r'+\epsilon'$ and $h^1(E')=0$. In either case, $$h^{0}(E)=(r'+\epsilon')+(r''-\epsilon)\geq (r'+\epsilon)+(r''-\epsilon)=r'+r''=r.$$
We obtain a rational map $\PP \dasharrow B^r(\vv) \subseteq M(\vv)$ where $\PP$ is the projective bundle over $M(\vv')\times B''$ whose fiber over $(E',E'')$ is $\PP \Ext^1(E'',E')$. Denote the image of this rational map by $Z_2$. Then the dimension counts (\ref{eq:Pdim}) and (\ref{eq:Z2dim}) apply, and we see that 
    \[\dim Z_2 = (r + \epsilon)k + (\text{const}) >rk + (\text{const}) = \dim Z_1\]
when $k\gg 0$. This completes the proof.
\end{proof}

\begin{example}
Let $\vv_k \in K(\PP^2)$ be the Chern character
    \[\ch(\vv) = (\ch_0(\vv), \ch_1(\vv), \ch_2(\vv)) = (3, 2, -1-k)\]
so that
    \[\mu(\vv) = \frac{2}{3}, \quad \Delta(\vv) = \frac{5}{9} + \frac{k}{3}.\]
Then $\vv_k$ is stable for all $k \geq 0$. As in Theorem \ref{comps}, there are two components in the Brill-Noether loci $B^3(\vv_k)$, whose general members are given as extensions
    \[0 \rightarrow \OO^3 \rightarrow E \rightarrow \OO_C(D) \rightarrow 0\]
with $C \subseteq \PP^2$ a smooth conic and $D$ of degree $1-k$, and
    \[0 \rightarrow T_{\PP^2}(-1) \rightarrow E \rightarrow I_Z(1) \rightarrow 0\]
where $Z \subseteq \PP^2$ is a codimension two subscheme of length $k+1$. For both extension types the general extension bundle $E$ is stable, as in Theorem \ref{comps}, and the locus $Z_1$ whose points correspond to the first extension type form an irreducible component of $B^3(\vv_k)$. Its dimension is
    \[\dim Z_1 = \dim \Pic^{1-k}_{\mathcal{C}/U} + \ext^1(\OO_C(D), \OO^3) - 1 + \dim GL_3 = 3k + 8.\]
The locus $Z_2$ corresponding to the second extension type has dimension
    \[\dim Z_2 = \dim \PP^{2[k+1]} + \ext^1(I_Z(1), T_{\PP^2}(-1)) - 1 +(\text{const})= 4k + (\text{const})\]
Thus $Z_2$ lies in an irreducible component distinct from $Z_1$, and we conclude the Brill-Noether locus $B^3(\vv_k)$ is reducible for $k\gg 0$.
\end{example}

\subsection*{Funding}
This work was supported by the National Science Foundation [grant \textit{no.} 1246844] to B.G.

\subsection*{Acknowledgements}
We are happy to thank Izzet Coskun for very many helpful conversations and his support while this work was carried out. We thank Geoffrey Smith for helpful comments on a preliminary draft of this paper. We thank the referees for many valuable suggestions on an earlier draft.

The figures in this paper were generated on Mathematica and graffitikz. graffitikz is an open-source 2D vector shape editor that supports TikZ output, created by Wenyu Jin. It can be found online at \url{https://github.com/wyjin/graffitikz}.

\bibliographystyle{alpha}
\bibliography{bibliography.bib}
\end{document}